\theoremstyle{plain}
\newtheorem{theorem}{Theorem}[section]	
\newtheorem{lemma}{Lemma}[section]
\newtheorem{corollary}{Corollary}[section]
\newtheorem{proposition}{Proposition}[section]
\theoremstyle{definition}
\newtheorem{definition}{Definition}[section]
\newtheorem{remark}{Remark}[section]
\newtheorem{example}{Example}[section]
\DeclareMathOperator{\Card}{Card}
\DeclareMathOperator{\Var}{Var}
\DeclareMathOperator{\re}{re}
\DeclareMathOperator{\si}{si}
\DeclareMathOperator{\spa}{sp}
\newcommand{\R}{\mathbb{R}}
\newcommand{\mS}{\mathcal{S}}
\newcommand{\mA}{\mathcal{A}}
\renewcommand{\tilde}{\widetilde}
\renewcommand{\hat}{\widehat}
\renewcommand{\qed}{\hfill{\tiny \ensuremath{\blacksquare} }}
\newcommand{\Ep}{{\mathrm{E}}}
\newcommand{\En}{{\mathbb{E}_n}}
\renewcommand{\Pr}{{\mathrm{P}}}
\begin{document}

\title[CLT and Bootstrap in High Dimensions]{Central Limit Theorems and Bootstrap in High Dimensions}

\author[Chernozhukov]{Victor Chernozhukov}
\author[Chetverikov]{Denis Chetverikov}
\author[Kato]{Kengo Kato}

\address[V. Chernozhukov]{
Department of Economics and Center for Statistics, MIT, 50 Memorial Drive, Cambridge, MA 02142, USA.}
\email{vchern@mit.edu}

\address[D. Chetverikov]{
Department of Economics, UCLA, Bunche Hall, 8283, 315 Portola Plaza, Los Angeles, CA 90095, USA.}
\email{chetverikov@econ.ucla.edu}

\address[K. Kato]{
Graduate School of Economics, University of Tokyo, 7-3-1 Hongo Bunkyo-ku, Tokyo 113-0033, Japan.}
\email{kkato@e.u-tokyo.ac.jp}

\date{First version: February 07, 2014. This version: \today.}

\begin{abstract}
This paper derives central limit and bootstrap theorems for probabilities that sums of centered high-dimensional random vectors hit hyperrectangles and sparsely convex sets. Specifically, we derive Gaussian and bootstrap approximations for probabilities $\Pr(n^{-1/2}\sum_{i=1}^n X_i\in A)$ where $X_1,\dots,X_n$ are independent random vectors in $\R^p$ and $A$ is a hyperrectangle, or, more generally, a sparsely convex set, and show that the approximation error converges to zero even if $p=p_n\to \infty$ as $n \to \infty$ and $p \gg n$; in particular, $p$ can be as large as $O(e^{Cn^c})$ for some constants $c,C>0$. The result holds uniformly over all hyperrectangles, or more generally, sparsely convex sets, and does not require any restriction on the correlation structure among coordinates of $X_i$.  Sparsely convex sets are sets that can be represented as intersections of many convex sets whose indicator functions depend  only on a small subset of their arguments, with hyperrectangles being a special case. 
\end{abstract}

\keywords{Central limit theorem, bootstrap limit theorems, high dimensions, hyperrectangles, sparsely convex sets}

\thanks{We are grateful to Evarist Gin\'{e}, Friedrich G\"{o}tze, Ramon van Handel, Vladimir Koltchinskii, Richard Nickl,  and Larry Wasserman, Galyna Livshyts, and Karim Lounici for useful discussions.}

\maketitle

\section{Introduction}
Let $X_1,\dots,X_n$ be independent random vectors in $\R^p$ where $p \geq 3$ may be large or even much larger than $n$.  Denote by $X_{ij}$ the $j$-th coordinate of $X_{i}$, so that $X_i=(X_{i1},\dots,X_{i p})'$.
We assume that each $X_i$ is centered, namely $\Ep[X_{i j}]=0$, and $\Ep [X_{ij}^{2}] < \infty$ for all $i=1,\dots,n$ and $j=1,\dots,p$. 
Define the normalized sum
\[
S^X_n:=(S^X_{n1},\dots,S^X_{np})':=\frac{1}{\sqrt{n}}\sum_{i=1}^n X_i.
\]
We consider Gaussian approximation to $S_{n}^{X}$, and to this end, let $Y_1,\dots, Y_n$ be independent centered Gaussian random vectors in $\R^p$ such that each $Y_i$ has the same covariance matrix as $X_i$, that is, $Y_i\sim N(0,\Ep[X_i X_i'])$. 
Define the normalized sum for the Gaussian random vectors:
\[
S^Y_n:=(S^Y_{n1},\dots,S^Y_{np})':=\frac{1}{\sqrt{n}}\sum_{i=1}^n Y_i.
\]
We are interested in bounding the quantity
\begin{equation}
\label{eq: rho definition general}
\rho_n(\mA):=\sup_{A \in \mA}|\Pr(S^X_n\in A)-\Pr(S^Y_n\in A)|,
\end{equation}
where  $\mA$ is a class of Borel sets in $\R^p$. 

Bounding $\rho_{n}(\mA)$ for various classes $\mA$ of sets in $\R^{p}$, with a special emphasis on explicit dependence on the dimension $p$ in the bounds, has been studied by a number of authors; see, for example, \cite{B86}, \cite{Bentkus03}, \cite{B75}, \cite{G91}, \cite{N76}, \cite{S68}, \cite{S81}, \cite{S80}, and \cite{S77}; we refer to \cite{CCK12d} for an exhaustive literature review.
Typically, we are interested in how fast $p = p_{n} \to \infty$ is allowed to grow while guaranteeing $\rho_n(\mA)\to 0$. 
In particular, Bentkus \cite{Bentkus03} established one of the sharpest results in this direction which states that when $X_{1},\dots,X_{n}$ are i.i.d. with $\Ep[X_{i} X_{i}'] = I$ ($I$ denotes the $p \times p$ identity matrix), 
\begin{equation}
\label{eq: Bentkus bound}
\rho_n(\mA) \leq  C_{p}(\mA)\frac{\Ep[\|X_{1}\|^3]}{\sqrt{n}}, 
\end{equation}
where $C_{p}(\mA)$ is a constant that depends only on $p$ and $\mA$; for example, $C_p(\mA)$ is bounded by a universal constant when $\mA$ is the class of all Euclidean balls in $\R^p$, and $C_p(\mA) \leq 400p^{1/4}$ when $\mA$ is the class of all Borel measurable convex sets in $\R^p$. Note, however, that this bound does not allow $p$ to be larger than $n$ once  we require $\rho_{n}(\mA) \to 0$. Indeed by Jensen's inequality, when $\Ep[X_{1}X_{1}'] = I$, $\Ep[ \|X_{1}\|^{3}] \geq (\Ep[\|X_{1}\|^{2}])^{3/2} = p^{3/2}$, and hence in order to make the right-hand side of (\ref{eq: Bentkus bound}) to be $o(1)$, we at least need $p=o(n^{1/3})$ when $\mA$ is the class of Euclidean balls, and $p = o(n^{2/7})$ when $\mA$ is the class of all Borel measurable convex sets. 
Similar conditions are needed in other papers cited above. It is worthwhile to mention here that, when $\mA$ is the class of all Borel measurable convex sets, it was proved by \cite{N76} that $\rho_n(\mA)\geq c\Ep[\|X_{1}\|^3]/\sqrt{n}$ for some universal constant $c > 0$.

In modern statistical applications, such as high dimensional estimation and multiple hypothesis testing, however, $p$ is often larger or even much larger than $n$. It is therefore interesting to ask whether it is possible to provide a nontrivial class of sets $\mA$ in $\R^p$ for which we would have 
\begin{equation}
\label{eq: general question}
\rho_n(\mA)\to 0 \ \text{{\em even if}} \ p \ \text{{\em is potentially larger or much larger than}} \ n.
\end{equation}

In this paper, we derive bounds on $\rho_{n}(\mA)$ for $\mA=\mA^{\re}$ being the class of all hyperrectangles, or more generally for $\mA\subset\mA^{\si}(a,d)$ being a class of simple convex sets, and show that these bounds lead to results of type (\ref{eq: general question}). We call any convex set a simple convex set if it can be well approximated by a convex polytope whose number of facets is (potentially very large but) not too large; see Section \ref{sec: friendly sets} for details. An extension to simple convex sets is interesting because it allows us to derive similar bounds for $\mA=\mA^{\spa}(s)$ being the class of ($s$-)sparsely convex sets. These are sets that can be represented as an intersection of many convex sets whose indicator functions depend nontrivially at most on $s$ elements of their arguments (for some small $s$).

The sets considered are useful for applications to statistics. In particular, the results for hyperrectangles
and sparsely convex sets are of importance  because they allow us to approximate the distributions of various key statistics that arise in inference for high-dimensional models. For example, the probability that a collection of Kolmogorov-Smirnov type statistics falls below a collection of thresholds
\[
\Pr \left (\max_{j \in J_k}  S_{nj}^X \leq t_k \ \text{for all} \ k=1,\dots,\kappa \right) =  \Pr \left (S_{n}^X   \in A \right)
\]
can be approximated by $\Pr  (S_{n}^Y   \in A )$ within the error margin $\rho_n(\mA^{\re})$; here
 $\{J_k\}$ are (non-intersecting) subsets of $\{1,\dots,p\}$, $\{t_k\}$ are thresholds in the interval $(-\infty, \infty)$,  $ \kappa \geq 1$ is an integer, and $A \in \mA^{\re}$  is a hyperrectangle of the form $\{w \in \R^p: \max_{j \in J_k}  w_j  \leq t_k \ \text{for all} \ k=1,\dots,\kappa\}$.  Another example is the probability that a collection of Pearson type statistics falls below a collection of thresholds
\[
\Pr \Big ( \|(S_{nj}^X)_{j \in J_k}\|^2 \leq t_k \ \text{for all} \ k=1,\dots,\kappa \Big ) =  \Pr \left (S_{n}^X   \in A \right),
\]
which can be approximated by $\Pr(S_{n}^Y  \in A)$ within the error margin $\rho_n(\mA^{\spa}(s))$; 
here  $\{J_k\}$ are subsets of $\{1,\dots,p\}$ of fixed cardinality $s$, $\{t_k\}$  are thresholds in the interval $(0, \infty)$, $\kappa \geq 1$ is an integer, and $A \in \mA^{\spa}(s)$ is a sparsely convex set 
of the form $\{ w\in \R^p: \|(w_j)_{j \in J_k}\|^2  \leq t_k \ \text{for all} \ k=1,\dots,\kappa\}$.   In practice, as we demonstrate, the approximations above
could be estimated using the empirical or multiplier bootstraps.


The results in this paper substantially extend those obtained in \cite{CCK12a} where we considered the class $\mA=\mA^m$ of sets of the form $A=\{w\in \R^p:\max_{j \in J}w_j\leq a\}$ for some $a\in\R$ and $J \subset \{1,\dots,p\}$, but in order to obtain much better dependence on $n$, we employ new techniques. Most notably, as the main ingredient in the new proof, we employ an argument inspired by Bolthausen \cite{Bolthausen84}.  Our paper builds upon our previous work \cite{CCK12a}, which in turn builds on  a number of works listed in the bibliography (see  \cite{CCK12d} for a detailed review and links to the literature).

The organization of this paper is as follows. 
In Section \ref{sec: main results}, we derive a Central Limit Theorem (CLT) for hyperrectangles in high dimensions; that is, we derive a bound on $\rho_n(\mA)$ for $\mA=\mA^{\re}$ being the class of all hyperrectangles and show that the bound converges to zero under certain conditions even when $p$ is potentially larger or much larger than $n$. In Section \ref{sec: friendly sets}, we extend this result by showing that similar bounds apply for $\mA\subset\mA^{\si}(a,d)$ being a class of simple convex sets and for $\mA=\mA^{\spa}(s)$ being the class of all $s$-sparsely convex sets. In Section \ref{sec: bootstrap CLT}, we derive high dimensional empirical and multiplier bootstrap theorems that allow us to approximate $\Pr(S_n^Y\in A)$ for $A\in\mA^{\re}$, $\mA^{\si}(a,d)$, or $\mA^{\spa}(s)$ using the data $X_1,\dots,X_n$. In Section \ref{sec: induction lemma}, we state an important technical lemma, which constitutes the main part of the derivation of our high dimensional CLT. Finally, we provide all the proofs as well as some technical results in the Appendix.

\subsection{Notation} 
For $a \in \R$, $[a]$ denotes the largest integer smaller than or equal to $a$. For $w=(w_1,\dots,w_p)' \in \R^p$ and $y=(y_1,\dots,y_p)' \in \R^p$, we write $w \leq y$ if $w_j \leq y_j$ for all $j=1,\dots,p$. For $y=(y_1,\dots,y_p)'\in \R^p$ and $a \in \R$, we write $y+a = (y_{1}+a,\dots,y_{p} + a)'$. Throughout the paper, $\En[\cdot]$ denotes the average over index $i=1,\dots,n$; that is, it simply abbreviates the notation $n^{-1}\sum_{i=1}^n[\cdot]$. For example, $\En[x_{i j}]=n^{-1}\sum_{i=1}^n x_{i j}$. 
We also write $X_{1}^{n} := \{ X_1,\dots,X_n \}$. 
For $v\in\R^p$, we use the notation $\|v\|_0:=\sum_{j=1}^p 1\{v_j \neq 0\}$ and $\|v\|=(\sum_{j=1}^p v_j^2)^{1/2}$.
For $\alpha>0$, we define the function $\psi_{\alpha}: [0,\infty) \to [0,\infty)$ by $\psi_{\alpha} (x):=\exp(x^\alpha)-1$, and for a real-valued random variable $\xi$, we define 
\[
\| \xi \|_{\psi_\alpha}:=\inf \{\lambda>0: \Ep[ \psi_{\alpha}( | \xi | /\lambda)] \leq 1\}. 
\]
For $\alpha \in [1,\infty)$, $\|\cdot\|_{\psi_{\alpha}}$ is an Orlicz norm, while for $\alpha \in (0,1)$, $\| \cdot \|_{\psi_{\alpha}}$ is not a norm but a quasi-norm, that is, there exists a constant $K_{\alpha}$ depending only on $\alpha$ such that $\| \xi_{1} + \xi_{2} \|_{\psi_{\alpha}} \leq K_{\alpha} ( \| \xi_{1} \|_{\psi_{\alpha}} + \| \xi_{2} \|_{\psi_{\alpha}})$. Throughout the paper, we assume that $n\geq 4$ and $p\geq 3$.

\section{High Dimensional CLT for Hyperrectangles}
\label{sec: main results}
This section presents a high dimensional CLT for hyperrectangles. 
We begin with presenting an abstract theorem (Theorem \ref{thm: main}); the bound in Theorem \ref{thm: main} is general but depends on the tail properties of the distributions of the coordinates of $X_{i}$ in a nontrivial way. Next we apply this theorem under simple moment conditions and derive more explicit bounds (Proposition \ref{cor: examples}).

Let $\mA^{\re}$ be the class of all hyperrectangles in $\R^p$; that is, $\mA^{\re}$ consists of all sets $A$ of the form
\begin{equation}\label{eq: rectangle}
A=\left\{w\in\R^p:a_j\leq w_j\leq b_j\text{ for all }j=1,\dots,p\right\}
\end{equation}
for some $-\infty\leq a_j\leq b_j\leq\infty$, $j=1,\dots,p$. We will derive a bound on $\rho_n(\mA^{\re})$, and show that under certain conditions it leads to $\rho_n(\mA^{\re})\to 0$ even when $p=p_n$ is potentially larger or much larger than $n$.

To describe the bound, we need to prepare some notation. Define
\[
L_n:=\max_{1\leq j\leq p}\sum_{i=1}^n \Ep[|X_{i j}|^3]/n, 
\]
and for $\phi\geq 1$, define
\begin{equation}\label{eq: M definition}
M_{n,X}(\phi):=n^{-1} \sum_{i=1}^n\Ep\left[\max_{1\leq j\leq p}|X_{ij}|^31\left\{\max_{1\leq j\leq p}|X_{i j}|>\sqrt{n}/(4\phi\log p)\right\}\right].
\end{equation}
Similarly, define $M_{n,Y}(\phi)$ with $X_{ij}$'s replaced by $Y_{ij}$'s in (\ref{eq: M definition}), and let 
\[
M_n(\phi):=M_{n,X}(\phi)+M_{n,Y}(\phi).
\]
The following is the first main result of this paper.
\begin{theorem}[Abstract High Dimensional CLT for Hyperrectangles]\label{thm: main}
Suppose that there exists some constant $b>0$ such that $n^{-1}\sum_{i=1}^n\Ep[X_{i j}^2]\geq b$ for all $j=1,\dots,p$. Then there exist constants $K_1, K_{2} >0$ depending only $b$ such that for every constant $\overline{L}_{n}\geq L_n$, we have
\begin{equation}
\label{eq: main bound}
\rho_n(\mA^{\re})\leq K_1\left[\left(\frac{\overline{L}_{n}^2\log^7 p}{n}\right)^{1/6}+\frac{M_{n}(\phi_n)}{\overline{L}_{n}}\right]
\end{equation}
with 
\begin{equation}\label{eq: psi}
\phi_n:=K_2 \left ( \frac{\overline{L}_{n}^2\log^4 p}{n} \right)^{-1/6}.
\end{equation}
\end{theorem}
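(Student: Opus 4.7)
The plan is to combine three classical ingredients (smoothing of hyperrectangle indicators, Slepian/Lindeberg interpolation between $S^X_n$ and $S^Y_n$, and truncation of heavy tails) with a Bolthausen-style self-bounding argument that promotes the naive rate to the $n^{-1/6}$ rate claimed in \eqref{eq: main bound}. Given a hyperrectangle $A=\{w:a\le w\le b\}$ I would first introduce the soft-max surrogate
\[
F_\beta(w):=\beta^{-1}\log\sum_{j=1}^p\bigl(e^{\beta(w_j-b_j)}+e^{\beta(a_j-w_j)}\bigr),
\]
so that $A\approx\{F_\beta\le 0\}$ up to a band of width $O(\beta^{-1}\log p)$ and $F_\beta$ has partial derivatives of orders $1,2,3$ bounded by $1$, $\beta$, $\beta^2$ respectively. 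Composing $F_\beta$ with a smooth transition $g_0\colon\R\to[0,1]$ produces a test function $h(w)=g_0(F_\beta(w))$ whose mixed partials up to order three satisfy $|\partial^\alpha h|\lesssim\beta^{|\alpha|-1}$; bounding $\Ep h(S^X_n)-\Ep h(S^Y_n)$ is the analytic heart of the proof.

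Next I would apply a Slepian-type interpolation, $Z_t:=\sqrt{t}\,S^X_n+\sqrt{1-t}\,S^Y_n$, and write
\[
\Ep h(S^X_n)-\Ep h(S^Y_n)=\int_0^1\frac{d}{dt}\Ep h(Z_t)\,dt.
\]
Expanding term by term in the summands $X_i,Y_i$ and using that $X_i,Y_i$ are centered with matching covariances kills the first two Taylor terms; only third-order derivatives of $h$ survive. After truncating each coordinate of $X_i$ at the scale $\sqrt{n}/(4\phi\log p)$ (which is exactly the cutoff built into the definition of $M_{n,X}(\phi)$), the third-derivative bound $\beta^2$ multiplies the truncated third moment, giving a contribution of order $\beta^2\overline L_n\log^2 p/\sqrt{n}$, while the tail remainder contributes $\phi^2 M_n(\phi)$ after dividing by $\sqrt{n}$. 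Picking $\beta=\phi_n$ as in \eqref{eq: psi} balances the soft-max approximation error $\phi^{-1}\log p$ against the Lindeberg term; this motivates the specific form of $\phi_n$.

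The crucial final step, and the main obstacle to a clean proof, is translating the smooth-function bound $|\Ep h(S^X_n)-\Ep h(S^Y_n)|$ into the probability bound $|\Pr(S^X_n\in A)-\Pr(S^Y_n\in A)|$. The natural route is anti-concentration: Nazarov's inequality controls $\Pr(F_\phi(S^Y_n)\in[-\epsilon,\epsilon])$ by $\epsilon\sqrt{\log p}$, which converts a smooth bound on the $Y$-side into a rectangle bound. On the $X$-side, however, no analogous intrinsic anti-concentration is available, so one is tempted to transfer through the Gaussian approximation that we are trying to establish — the inherent circularity. This is precisely where the induction lemma of Section \ref{sec: induction lemma} (inspired by Bolthausen \cite{Bolthausen84}) is used: it produces an inequality of the schematic form
\[
\rho_n(\mA^{\re})\le C\Bigl[\bigl(\overline L_n^{\,2}\log^7 p/n\bigr)^{1/6}+M_n(\phi_n)/\overline L_n\Bigr]+\tfrac{1}{2}\rho_n(\mA^{\re}),
\]
which can be solved for $\rho_n(\mA^{\re})$ and delivers the bound \eqref{eq: main bound}. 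Assembling Steps A--D, choosing $\beta=\phi_n$ to balance the smoothing and third-moment contributions, and absorbing the self-reference via the induction lemma yields the theorem, with the constants $K_1,K_2$ depending only on $b$ through the lower bound on the coordinate variances used in the Gaussian anti-concentration step.
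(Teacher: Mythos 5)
Your proposal follows essentially the same route as the paper: soft-max smoothing of the rectangle indicator, Slepian--Stein interpolation with the first two Taylor terms cancelled by the matching covariances, truncation at the scale $\sqrt{n}/(4\phi\log p)$ producing the $M_n(\phi)$ remainder, Nazarov's anti-concentration to de-smooth on the Gaussian side, and the Bolthausen self-bounding inequality solved with $\phi=\phi_n/2$ -- which is exactly how the paper proves Theorem \ref{thm: main} from Corollary \ref{cor: induction lemma}. The only point to keep straight is that the self-referential inequality must be run over the quantity $\varrho_n'$, the supremum over the whole interpolation family $\sqrt{v}S_n^X+\sqrt{1-v}S_n^Y$, $v\in[0,1]$ (not over $\rho_n(\mA^{\re})$ alone, since the anti-concentration step applied to the interpolant $Z(t)$ lands back in that family rather than at $v=1$); this is precisely how the Key Lemma is formulated, and since you invoke it for that step the argument closes.
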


\begin{remark}[Key features of Theorem \ref{thm: main}]
(i) The bound (\ref{eq: main bound}) should be contrasted with Bentkus's \cite{Bentkus03} bound (\ref{eq: Bentkus bound}). 
For the sake of exposition, assume that the vectors $X_1,\dots,X_n$ are such that $\Ep[X_{ij}^2]=1$ and for some sequence of constants $B_{n} \geq 1$, $|X_{ij}|\leq B_n$ for all $i=1,\dots,n$ and $j=1,\dots,p$. Then it can be shown that the bound (\ref{eq: main bound}) reduces to
\begin{equation}
\label{eq: our bound comparison}
\rho_n(\mA^{\re}) \leq K \Big( n^{-1} B_n^2\log^7 (p n) \Big)^{1/6}
\end{equation}
for some universal constant $K$; see Proposition \ref{cor: examples} below. Importantly, the right-hand side of (\ref{eq: our bound comparison}) converges to zero even when $p$ is much larger than $n$; indeed we just need $B_n^2\log^7 (p n) = o(n)$ to make $\rho_{n}(\mA^{\re}) \to 0$, and if in addition $B_{n} = O(1)$, the condition reduces to $\log p = o(n^{1/7})$. In contrast, Bentkus's bound (\ref{eq: Bentkus bound}) requires $\sqrt{p} = o(n^{1/7})$ to make $\rho_{n}(\mA) \to 0$ when $\mA$ is the class of all Borel measurable convex sets. 
Hence by restricting the class of sets to the smaller one, $\mA= \mA^{\re}$, we are able to considerably weaken the requirement on $p$, replacing $\sqrt{p}$ by $\log p$.

\medskip

(ii) On the other hand, the bound in (\ref{eq: our bound comparison}) depends on $n$ through $n^{-1/6}$, so that our Theorem \ref{thm: main} does not recover the Berry-Esseen bound when $p$ is fixed. However, given that the rate $n^{-1/6}$ is optimal (in a minimax sense) in CLT in infinite dimensional Banach spaces (see \cite{Bentkus85}),  the factor $n^{-1/6}$  seems nearly optimal in terms of dependence on $n$ in the high-dimensional settings as considered here. 
In addition, examples in \cite{CCK12b} suggest that dependence on $B_n$ is also optimal.  
Hence we conjecture that up to a universal constant,
\[
\Big( n^{-1} B_n^2 (\log p)^{a} \Big)^{1/6}
\] 
for some $a >0 $ is an optimal bound  (in a minimax sense) in the high dimensional setting as considered here.  The value $a =3$ could be motivated by the theory of moderate deviations for self-normalized sums
when all the coordinates of $X_i$ are independent.  \qed
\end{remark}

\begin{remark}[Relation to previous work]
Theorem \ref{thm: main} extends Theorem 2.2 in \cite{CCK12a} where we derived a bound on $\rho_n(\mA^m)$
with  $\mA^m \subset \mA^{\re}$ consisting of all sets of the form
\[
A=\{ w \in \R^p: w_j \leq a \ \text{for all} \ j=1,\dots,p\}
\]
for some $a \in \R$. In particular, we improve the dependence on $n$ from $n^{-1/8}$ in \cite{CCK12a} to $n^{-1/6}$. In addition, we note that extension to the class $\mA^{\re}$ from the class $\mA^m$ is not immediate since in both papers we assume that $\Var (S_{nj}^X)$ is bounded below from zero uniformly in $j=1,\dots,p$, so that it is not possible to directly extend the results in \cite{CCK12a} to the class of hyperrectangles $\mA = \mA^{\re}$ by just rescaling the coordinates in $S_{n}^{X}$. 
\qed
\end{remark}

The bound (\ref{eq: main bound}) depends on $M_{n}(\phi_n)$ whose values are problem specific. Therefore, we now apply Theorem \ref{thm: main} in two specific examples that are most useful in mathematical statistics (as well as other related fields such as econometrics). Let $b,q>0$ be some constants, and let $B_n\geq 1$ be a sequence of constants, possibly growing to infinity as $n \to \infty$. Assume that the following conditions are satisfied:

\begin{itemize}
\item[(M.1)] $n^{-1} \sum_{i=1}^{n}\Ep[X^2_{i j}] \geq b$ {\em for all} $j=1,\dots,p$, 
\item[(M.2)] $n^{-1} \sum_{i=1}^{n}\Ep[|X_{i j}|^{2+k}]\leq B^k_n$ {\em for all} $j=1,\dots,p$ {\em and} $k=1,2$.
\end{itemize}

We consider examples where one of the following conditions holds:

\begin{itemize}
\item[(E.1)] $\Ep[\exp(|X_{i j}|/B_n)] \leq 2$ {\em for all} $i=1,\dots,n$ {\em and} $j=1,\dots,p$,
\item[(E.2)] $\Ep[ (\max_{1\leq j \leq p} |X_{i j}| / B_{n})^q] \leq 2$ {\em for all} $i=1,\dots,n$,
\end{itemize}
In addition, denote
\begin{equation}
\label{eq: definition D}
D_{n}^{(1)} = \left(\frac{B_n^2\log^7 (p n)}{n}\right)^{1/6}, \ D_{n,q}^{(2)} = \left(\frac{B_n^2\log^3 (pn)}{n^{1-2/q}}\right)^{1/3}.
\end{equation}
An application of Theorem \ref{thm: main} under these conditions leads to the following proposition. 
\begin{proposition}[High Dimensional CLT for Hyperrectangles]\label{cor: examples}
Suppose that conditions {\em (M.1)} and {\em (M.2)} are satisfied. Then under {\em (E.1)}, we have
\[
\rho_n(\mA^{\re})
\leq C D_{n}^{(1)},
\]
where the constant $C$ depends only on $b$; while under {\em (E.2)}, we have
\[
\rho_n(\mA^{\re})
\leq C \{ D_{n}^{(1)} + D_{n,q}^{(2)} \},
\]
where the constant $C$ depends only on $b$ and $q$.
\end{proposition}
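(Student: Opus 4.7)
The plan is to apply Theorem~\ref{thm: main} with the simple choice $\overline{L}_n = B_n$ (admissible because (M.2) with $k=1$ gives $L_n \leq B_n$) and then to estimate the truncated-moment term $M_n(\phi_n)/\overline{L}_n$ using the tail structure provided by (E.1) and (E.2). With this choice, $\phi_n$ from (\ref{eq: psi}) equals $K_2(B_n^2 \log^4 p/n)^{-1/6}$, so the truncation threshold appearing inside $M_n(\phi_n)$ is
\[
u_n := \frac{\sqrt{n}}{4 \phi_n \log p} = c_0 \, \frac{B_n^{1/3} n^{1/3}}{\log^{1/3} p}
\]
for a positive constant $c_0$. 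The first term of (\ref{eq: main bound}) is exactly $(B_n^2 \log^7 p/n)^{1/6} \leq D_n^{(1)}$, so the entire work is to control the second term.

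Under (E.1), Markov applied to $\Ep[\exp(|X_{ij}|/B_n)] \leq 2$ together with a union bound gives the sub-exponential tail $\Pr(M_i^X > s) \leq 2 p \, e^{-s/B_n}$ for $M_i^X := \max_j |X_{ij}|$, and an integration by parts yields
\[
\Ep[(M_i^X)^3 1\{M_i^X > u_n\}] \leq C p \, e^{-u_n/B_n} (u_n^3 + B_n u_n^2 + B_n^2 u_n + B_n^3).
\]
Since the claim is trivial unless $B_n^2 \log^7(pn) \leq n$, in the relevant regime the ratio $u_n/(B_n \log(pn))$ is polynomial in $n$, so the exponential factor overwhelms the union-bound factor $p$ together with the polynomial in $u_n$, delivering a quantity much smaller than $B_n D_n^{(1)}$. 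The Gaussian term $M_{n,Y}(\phi_n)$ is handled analogously and is easier: $\Ep[Y_{ij}^2] = \Ep[X_{ij}^2] \leq 2 B_n^2$ makes $Y_{ij}$ sub-Gaussian at scale $B_n$, so the tail bound is even stronger.

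Under (E.2), I would use Markov with the polynomial moment: for $q \geq 3$,
\[
\Ep[(M_i^X)^3 1\{M_i^X > u_n\}] \leq u_n^{3-q} \Ep[(M_i^X)^q] \leq 2 B_n^q u_n^{3-q}.
\]
Substituting $u_n$ and dividing by $B_n$ controls this by $D_{n,q}^{(2)}$ whenever $B_n$ is not too large relative to $n^{1/q}$; for the heavy-tail regime one uses a larger choice of $\overline{L}_n$ (of order $B_n^{3-2/q}$ up to logarithms) to rebalance the two terms in (\ref{eq: main bound}), so that their sum is bounded by $D_n^{(1)} + D_{n,q}^{(2)}$. The Gaussian contribution is dominated in the same way as above. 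The main technical obstacle is the (E.1) case: verifying that the exponential factor $e^{-u_n/B_n}$ genuinely swallows the factor $p$ and the polynomial in $u_n$ with margin $B_n D_n^{(1)}$. This reduces to a direct but careful tracking of the powers of $\log(pn)$ once the sub-exponential tail bound is in hand; there is no conceptual difficulty, since Theorem~\ref{thm: main} has already done the heavy lifting.
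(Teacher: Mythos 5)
Your proposal follows essentially the same route as the paper: invoke Theorem \ref{thm: main}, take $\overline{L}_n=B_n$ (or a suitable enlargement), and kill $M_n(\phi_n)/\overline{L}_n$ via tail bounds, working on the non-trivial regime $B_n^2\log^7(pn)\leq cn$. Your treatment of (E.1) is sound; you use a union bound ($\Pr(\max_j|X_{ij}|>s)\leq 2p\,e^{-s/B_n}$) where the paper instead bounds $\|\max_j X_{ij}\|_{\psi_1}\leq CB_n\log p$ via the Orlicz maximal inequality, but both yield the decisive estimate $u_n/B_n\gtrsim\log^2(pn)$ (note your claim that $u_n/(B_n\log(pn))$ is ``polynomial in $n$'' is not accurate near the boundary $B_n^2\sim n/\log^7(pn)$, where it is only of order $\log(pn)$; that still suffices, since $e^{-c\log^2(pn)}$ swallows $p$ times any fixed power of $n$).

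The one genuine flaw is your choice of $\overline{L}_n$ in the heavy-tail regime of (E.2). The enlargement is forced by the requirement that the first term of (\ref{eq: main bound}) stay below $D_n^{(1)}+D_{n,q}^{(2)}$, i.e.\ $(\overline{L}_n^2\log^7p/n)^{1/6}\lesssim(B_n^4\log^6(pn)/n^{2-4/q})^{1/6}$, which pins down $\overline{L}_n\asymp B_n^2 n^{-(1/2-2/q)}$ up to logarithms; the paper takes $\overline{L}_n=B_n+B_n^2 n^{-(1/2-2/q)}\log^{-1/2}p$. Your proposed order $B_n^{3-2/q}$ does not satisfy this: plugging it in gives a first term of order $(B_n^{6-4/q}\log^7p/n)^{1/6}$, and the required inequality $B_n^{2-4/q}\lesssim n^{-1+4/q}$ fails for $q>4$ when $B_n$ grows. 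With the correct $\overline{L}_n$, one gets $\phi_n\lesssim n^{1/3-2/(3q)}B_n^{-2/3}(\log p)^{-1/2}$, and your Markov step $\Ep[(M_i^X)^3 1\{M_i^X>u_n\}]\leq 2B_n^q u_n^{3-q}$ (valid for $q\geq 3$) then yields $M_{n,X}(\phi_n)/\overline{L}_n\lesssim D_{n,q}^{(2)}$ exactly as you intend; the Gaussian term is handled by the sub-exponential argument of case (E.1), using that $\Ep[Y_{ij}^2]=\Ep[X_{ij}^2]\leq CB_n^2$ per $i$ under (E.2). So the idea of rebalancing is right, but the stated order of $\overline{L}_n$ must be corrected for the argument to close.
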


\section{High Dimensional CLT for Simple and Sparsely Convex Sets}
\label{sec: friendly sets}

In this section, we extend the results of Section \ref{sec: main results} by considering larger classes of sets; in particular, we consider classes of simple convex sets, and obtain, under certain conditions, bounds that are similar to those in Section \ref{sec: main results} (Proposition  \ref{cor: sparsely convex bodies}). Although an extension to simple convex sets is not difficult, in high dimensional spaces, the class of simple convex sets is rather large. In addition, it allows us to derive similar bounds for classes of sparsely convex sets. These classes in turn may be of interest in statistics where sparse models and techniques have been of canonical importance in the past years.

\subsection{Simple convex sets}

Consider a closed convex set $A\subset\R^p$.  This set can be characterized by its support function:
\[
\mS_{A}: \mathbb{S}^{p-1} \to \R \cup \{\infty\}, \quad v \mapsto  \mS_{A}(v): = \sup\{ w' v: w \in A\},
\]
where $\mathbb{S}^{p-1}:=\{v\in\R^p: \| v \| =1\}$;
in particular,  $A = \cap_{v \in \mathbb{S}^{p-1}} \{ w \in \R^p: w' v \leq \mS_A(v)  \}$.
We say that the set $A$ is {\em $m$-generated} if it is generated by the intersection of $m$ half-spaces (that is, $A$ is a convex polytope with at most $m$ facets). The support  function $\mS_{A}$ of such a set $A$  can be characterized completely by its values   $\{\mS_A(v) : v \in \mathcal{V}(A)\}$ for the set $\mathcal{V}(A)$ consisting of $m$ unit vectors that are outward normal to the facets of $A$. Indeed,
\[
A = \cap_{v \in \mathcal{V}(A)} \{ w \in \R^p: w' v \leq \mS_A(v)  \}.
\]
For $\epsilon>0$ and an $m$-generated convex set $A^m$, we define
\[
A^{m,\epsilon}:=\cap_{v \in \mathcal{V}(A^m)} \{ w \in \R^p: w' v \leq \mS_{A^m}(v) +\epsilon \},
\]
and we say that a convex set $A$ admits an approximation with precision $\epsilon$ by an $m$-generated convex set $A^m$ if
\[
A^m \subset A \subset A^{m,\epsilon}.
\]

Let $a,d>0$ be some constants. Let $\mA^{\si}(a,d)$ be the class of all Borel sets $A$ in $\R^p$ that satisfy the following condition:

\begin{itemize}
\item[(C)] \textit{The set $A$ admits an approximation with precision $\epsilon = a/n$ by an $m$-generated convex set $A^m$ where $m \leq  (p n)^d$.}
\end{itemize}

We refer to sets $A$ that satisfy condition (C) as {\em simple convex sets}. 
Note that any hyperrectangle $A \in \mA^{\re}$ satisfies condition (C) with $a=0$ and $d=1$ (recall that $n\geq 4$), and so belongs to the class $\mA^{\si}(0,1)$. For $A\in\mA^{\si}(a,d)$, let $A^m(A)$ denote the corresponding set $A^m$ that appears in condition (C).

We will consider subclasses $\mathcal{A}$ of the class $\mA^{\si}(a,d)$ consisting of sets $A$ such that for $A^m=A^m(A)$ and $\tilde{X}_i=(\tilde{X}_{i1},\dots,\tilde{X}_{im})'=(v'X_i)_{v\in\mathcal{V}(A^m)}$, $i=1,\dots,n$, the following conditions are satisfied:

\begin{itemize}
\item[(M.1$'$)] $n^{-1} \sum_{i=1}^{n}\Ep[\tilde{X}^2_{i j}] \geq b$ {\em for all} $j=1,\dots,m$, 
\item[(M.2$'$)] $n^{-1} \sum_{i=1}^{n}\Ep[|\tilde{X}_{i j}|^{2+k}]\leq B^k_n$ {\em for all} $j=1,\dots,m$ {\em and} $k=1,2$,
\end{itemize}
and, in addition, one of the following conditions is satisfied:

\begin{itemize}
\item[(E.1$'$)] $\Ep[\exp(|\tilde{X}_{i j}|/B_n)] \leq 2$ {\em for all} $i=1,\dots,n$ {\em and} $j=1,\dots,m$,
\item[(E.2$'$)] $\Ep[ (\max_{1\leq j \leq m} |\tilde{X}_{ij}| / B_{n})^q] \leq 2$ {\em for all} $i=1,\dots,n$.
\end{itemize}

Conditions (M.1$'$), (M.2$'$), (E.1$'$), and (E.2$'$) are similar to those used in the previous section but they apply to  $\tilde{X}_1,\dots,\tilde{X}_n$ rather than to $X_1,\dots,X_n$.

Recall the definition of $\rho_n(\mA)$ in (\ref{eq: rho definition general}) and the definitions of $D_{n}^{(1)}$ and $D_{n,q}^{(2)}$ in (\ref{eq: definition D}).
An extension of Proposition \ref{cor: examples} leads to the following result.

\begin{proposition}[High Dimensional CLT for Simple Convex Sets]\label{cor: sparsely convex bodies}
Let $\mA$ be a subclass of $\mA^{\si}(a,d)$ such that conditions {\em (M.1$'$)}, {\em (M.2$'$)}, and {\em (E.1$'$)} are satisfied for every $A \in \mA$. Then
\begin{equation}\label{eq: best rate}
\rho_n(\mA) 
\leq C D_{n}^{(1)},
\end{equation}
where the constant $C$ depends only on $a$, $b$, and $d$. 
If, instead of condition {\em (E.1$'$)}, condition {\em (E.2$'$)} is satisfied for every $A \in \mA$, then 
\begin{equation}\label{eq: best rate 2}
\rho_n(\mA) 
\leq C \{ D_{n}^{(1)} + D_{n,q}^{(2)} \},
\end{equation}
where the constant $C$ depends only on $a$, $b$, $d$, and $q$.
\end{proposition}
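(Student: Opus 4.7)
\medskip

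The plan is to reduce each simple convex event to a hyperrectangle event in an auxiliary dimension $m \leq (pn)^d$ determined by the generating normals, and then invoke the hyperrectangle CLT of Proposition \ref{cor: examples} on the transformed vectors $\tilde X_1,\dots,\tilde X_n$. Fix $A \in \mA \subset \mA^{\si}(a,d)$, let $A^m = A^m(A)$ have outward normals $\mathcal{V}(A^m) = \{v_1,\dots,v_m\}$, write $s_j := \mS_{A^m}(v_j)$, and recall $A^m \subset A \subset A^{m,\epsilon}$ for $\epsilon := a/n$. With $\tilde X_i = (v_j'X_i)_{j=1}^m$ (and $\tilde Y_i$ analogously) and $\tilde S_n^X := n^{-1/2}\sum_i \tilde X_i$, the key identity is
\[
S_n^X \in A^m \iff \tilde S_n^X \in \tilde R^m, \quad S_n^X \in A^{m,\epsilon} \iff \tilde S_n^X \in \tilde R^{m,\epsilon},
\]
where $\tilde R^m = \{w \in \R^m : w_j \leq s_j\}$ and $\tilde R^{m,\epsilon} = \{w \in \R^m : w_j \leq s_j + \epsilon\}$ are hyperrectangles in $\R^m$. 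The same identity holds for $S_n^Y$ and $\tilde S_n^Y$.

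\medskip

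Using this reduction, I would run the standard sandwich: the inclusions $\tilde R^m \subset \tilde R^{m,\epsilon}$ give
\[
\Pr(S_n^X \in A) - \Pr(S_n^Y \in A) \leq \bigl[\Pr(\tilde S_n^X \in \tilde R^{m,\epsilon}) - \Pr(\tilde S_n^Y \in \tilde R^{m,\epsilon})\bigr] + \Pr(\tilde S_n^Y \in \tilde R^{m,\epsilon}\setminus \tilde R^m),
\]
and the reverse direction is symmetric (swap $X\leftrightarrow Y$ and use $\tilde R^m$ in the main term). The bracketed term is the difference of probabilities for a hyperrectangle in $\R^m$, so by Proposition \ref{cor: examples} applied to $\tilde X_1,\dots,\tilde X_n$ (which by hypothesis satisfy (M.1$'$), (M.2$'$), and (E.1$'$) or (E.2$'$) in dimension $m$) it is bounded by $C(b)(B_n^2\log^7(mn)/n)^{1/6}$ (resp.\ with the additional $(B_n^2\log^3(mn)/n^{1-2/q})^{1/3}$ term under (E.2$'$)). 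Since $m \leq (pn)^d$ we have $\log(mn) \leq (d+1)\log(pn)$, so the whole bracket is $\leq C(a,b,d)\,D_n^{(1)}$ (resp.\ $\leq C(a,b,d,q)\{D_n^{(1)}+D_{n,q}^{(2)}\}$).

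\medskip

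The residual boundary term I would bound via Nazarov-type Gaussian anti-concentration for hyperrectangle maxima. Condition (M.1$'$) ensures $\min_j \Var(\tilde S_{n,j}^Y) \geq b$, so
\[
\Pr(\tilde S_n^Y \in \tilde R^{m,\epsilon}\setminus \tilde R^m) = \Pr\!\Bigl(\max_{1\leq j\leq m}(\tilde S_{n,j}^Y - s_j) \in (0,\epsilon]\Bigr) \leq C(b)\,\epsilon\sqrt{\log m} \leq C(a,b,d)\,\frac{\sqrt{\log(pn)}}{n},
\]
which is $o(D_n^{(1)})$ and so absorbed. Taking the supremum over $A \in \mA$ and combining the two sandwich directions yields the two announced bounds.

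\medskip

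The main obstacle I expect is uniformity in $A$: both the approximating polytope $A^m$ and its ambient dimension $m$ vary with $A$, so one must be sure that every quantity only enters through the universal cap $m \leq (pn)^d$ and through the $A$-specific conditions (M.1$'$)--(E.2$'$) that are assumed to hold uniformly. This is not too serious because Proposition \ref{cor: examples} is dimension-free except through $\log p$, and here $\log m \leq (d+1)\log(pn)$ costs only a constant factor in the final bound; likewise Nazarov's inequality depends on $m$ only through $\sqrt{\log m}$. A smaller point to verify is that the $\epsilon = a/n$ approximation error is genuinely negligible compared with $D_n^{(1)}$—this is essentially automatic since $1/n$ beats $n^{-1/6}$ by a wide margin.
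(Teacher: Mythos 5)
Your proposal is correct and follows essentially the same route as the paper's proof: sandwich $A$ between $A^m$ and $A^{m,\epsilon}$, apply Proposition \ref{cor: examples} to the transformed vectors $\tilde X_i=(v'X_i)_{v\in\mathcal{V}(A^m)}$ to handle the polytope probabilities, and use Nazarov's anti-concentration inequality (Lemma \ref{lem: anti-concentration}) under (M.1$'$) to show the $\epsilon=a/n$ slack costs only $C\epsilon\log^{1/2}(pn)\leq CD_n^{(1)}$. All the points you flag (uniformity via $m\leq(pn)^d$, $\log m\leq(d+1)\log(pn)$) are handled exactly as you anticipate.
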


It is worthwhile to mention that a notable example where the transformed variables $\tilde{X}_{i} =  (v'X_{i})_{v \in \mathcal{V}(A^{m})}$ satisfy condition (E.1$'$) is the case where each $X_{i}$ obeys a log-concave distribution. Recall that a Borel probability measure $\mu$ on $\R^{p}$ is {\em log-concave} if for any compact sets $A_{1},A_{2}$ in $\R^{p}$ and $\lambda \in (0,1)$, 
\[
\mu (\lambda A_{1}+(1-\lambda)A_{2}) \geq \mu (A_{1})^{\lambda}\mu (A_{2})^{1-\lambda}, 
\]
where $\lambda A_{1}+(1-\lambda)A_{2} = \{ \lambda x + (1-\lambda)y : x \in A_{1}, y \in A_{2} \}$. 

\begin{corollary}[High Dimensional CLT for Simple Convex Sets with Log-concave Distributions]\label{cor: log-concave distributions}
Suppose that each $X_{i}$ obeys a centered log-concave distribution on $\R^p$ and that all the eigenvalues of $\Ep[X_i X_i']$ are bounded from below by a constant $k_1>0$ and from above by a constant $k_2\geq k_1$ for every $i=1,\dots,n$. Then
\[
\rho_n(\mA^{\si}(a,d)) \leq C n^{-1/6} \log^{7/6}(p n),
\]
where the constant $C$ depends only on $a,b,d,k_1$, and $k_2$.
\end{corollary}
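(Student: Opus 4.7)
The plan is to reduce the claim to Proposition \ref{cor: sparsely convex bodies} by showing that its hypotheses (M.1$'$), (M.2$'$), and (E.1$'$) are satisfied uniformly over $A \in \mA^{\si}(a,d)$ with a constant $B_n$ that depends only on $k_1$ and $k_2$ and in particular is bounded as $n \to \infty$. Once this reduction is carried out, substituting such a $B_n$ into (\ref{eq: best rate}) gives
\[
\rho_n(\mA^{\si}(a,d)) \leq C \left( \frac{B_n^2 \log^7(pn)}{n} \right)^{1/6} \leq C' n^{-1/6} \log^{7/6}(pn),
\]
which is the desired conclusion with $C'$ depending only on $a, b, d, k_1, k_2$ (taking $b := k_1$).

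The verification of the three conditions proceeds as follows. Fix any $A \in \mA^{\si}(a,d)$ and any unit vector $v \in \mathcal{V}(A^m(A))$, and consider the scalar random variable $\tilde X_{ij} = v' X_i$. By the Pr\'ekopa--Leindler inequality, any one-dimensional linear image of a log-concave distribution is itself log-concave, so each $\tilde X_{ij}$ is a centered log-concave random variable on $\R$. Its variance satisfies $\Ep[\tilde X_{ij}^2] = v' \Ep[X_i X_i'] v \in [k_1, k_2]$ by the eigenvalue assumption, giving (M.1$'$) with $b = k_1$. For (M.2$'$) and (E.1$'$), I would invoke the classical Borell estimates for centered log-concave random variables on the line: $\|\tilde X_{ij}\|_{L^q} \leq C_q \|\tilde X_{ij}\|_{L^2}$ for every $q \geq 1$ and $\|\tilde X_{ij}\|_{\psi_1} \leq C \|\tilde X_{ij}\|_{L^2}$, both with universal constants. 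Combined with the variance bound $\|\tilde X_{ij}\|_{L^2} \leq \sqrt{k_2}$, these estimates show that $\Ep[|\tilde X_{ij}|^{2+k}] \leq C_k k_2^{(2+k)/2}$ for $k = 1, 2$ and $\Ep[\exp(|\tilde X_{ij}|/(C\sqrt{k_2}))] \leq 2$, so choosing $B_n$ to be any constant (independent of $n$) that dominates $1$, $C\sqrt{k_2}$, and the finitely many constants arising from the moment bounds for $k = 1, 2$ yields both (M.2$'$) and (E.1$'$).

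There is no substantive obstacle in this argument; it is a routine invocation of one-dimensional log-concave distribution facts together with Proposition \ref{cor: sparsely convex bodies}. The one point worth flagging is that the hypotheses of the proposition must be verified \emph{uniformly} in $A \in \mA^{\si}(a,d)$ and in $v \in \mathcal{V}(A^m(A))$; but because all of the relevant bounds depend on $v$ only through $\|v\| = 1$ and through the matrix $\Ep[X_i X_i']$, uniformity is automatic from the assumed two-sided eigenvalue bounds. No other feature of the structure of $A$ or of $m$ (which may be as large as $(pn)^d$) enters the verification.
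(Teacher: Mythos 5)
Your proposal is correct and follows essentially the same route as the paper: verify (M.1$'$) from the lower eigenvalue bound, verify (M.2$'$) and (E.1$'$) via Borell's inequality for log-concave distributions (which gives $\|v'X_i\|_{\psi_1}\leq c(\Ep[(v'X_i)^2])^{1/2}$), take $B_n$ to be a constant depending only on $k_2$, and apply Proposition \ref{cor: sparsely convex bodies}. The only cosmetic difference is that you spell out the one-dimensional marginal log-concavity and the moment comparisons explicitly, whereas the paper cites Borell directly for the linear functionals $v'X_i$.
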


\subsection{Sparsely convex sets}

We next consider classes of sparsely convex sets defined as follows.

\begin{definition}[Sparsely convex sets]\label{def: sparsely convex sets}
For integer $s>0$, we say that $A\subset\R^p$ is an {\em $s$-sparsely convex set} if there exist an integer $Q>0$ and convex sets $A_q \subset \R^p, q=1,\dots,Q$, such that $A=\cap_{q=1}^Q A_q$ and the indicator function of each $A_q$, $w\mapsto I(w\in A_q)$, depends at most on $s$ elements of its argument $w=(w_1,\dots,w_p)$ (which we call the  main components of $A_q$). We also say that $A=\cap_{q=1}^Q A_q$ is a sparse representation of $A$.
\end{definition}

Observe that for any $s$-sparsely convex set $A\subset \R^p$, the integer $Q$ in Definition \ref{def: sparsely convex sets} can be chosen to satisfy $Q\leq C_s^p\leq p^s$, where $C_s^p$ is the number of combinations of size $s$ from $p$ objects. Indeed, if we have a sparse representation $A=\cap_{q=1}^Q A_q$ for $Q>C_s^p$, then there are at least two sets $A_{q_{1}}$ and $A_{q_{2}}$ with the same main components, and hence we can replace these two sets by one convex set $A_{q_{1}}\cap A_{q_{2}}$ with the same main components; this procedure can be repeated until we have $Q \leq C_s^p$.

\begin{example}
The simplest example satisfying Definition \ref{def: sparsely convex sets} is a hyperrectangle as in (\ref{eq: rectangle}), which is a $1$-sparsely convex set. Another  example is the set
\[
A=\{ w \in \R^p: v_k' w \leq a_k \ \text{for all} \ k=1,\dots,m\}
\]
for some unit vectors $v_k \in \mathbb{S}^{p-1}$ and coefficients $a_k$, $k=1,\dots,m$. If the number of non-zero elements of each $v_k$ does not exceed $s$, this $A$ is an $s$-sparsely convex set.
Yet another example is the set
\[
A=\{ w \in \R^p:a_j\leq w_j\leq b_j \ \text{for all} \ j=1,\dots,p \ \text{and} \ w_1^2+w_2^2\leq c\}
\]
for some coefficients $-\infty\leq a_j\leq b_j\leq \infty$, $j=1,\dots,p$, and $0< c\leq \infty$. This $A$ is a $2$-sparsely convex set.
A more complicated example is the set
\[
A=\{ w \in \R^p:a_j\leq w_j\leq b_j,w_k^2+w_l^2\leq c_{k l}, \ \text{for all} \ j,k,l=1,\dots,p\}
\]
for some coefficients $-\infty\leq a_j\leq b_j\leq \infty$, $0< c_{k l}\leq \infty$, $j,k,l=1,\dots,p$. This $A$ is a $2$-sparsely convex set.  Finally, consider the set
\[
A = \{ w \in \R^p: \|(w_j)_{j \in J_k}\|^2  \leq t_k \ \text{for all} \ k=1,\dots,\kappa\},
\]
where $\{J_k\}$ are subsets of $\{1,\dots,p\}$ of fixed cardinality $s$, $\{t_k\}$  are thresholds in $(0, \infty)$, and $1 \leq \kappa \leq C_s^p$ is an integer.  This $A$ is an $s$-sparsely convex set.\qed
\end{example}

As the proof of Proposition \ref{cor: sparsely convex sets} below reveals, $s$-sparsely convex sets are closely related to simple convex sets. In particular, we can split any $s$-sparsely convex set $A\subset\mathbb R^p$ into $A\cap B$ and $A\cap B'$ for a cube $B = \{w\in\mathbb R^p: \max_{1\leq j\leq p}|w_j|\leq R\}$. Setting $R = p n^{5/2}$, it is easy to show that both $\Pr(S_n^X\in A\cap B')$ and $\Pr(S_n^Y\in A\cap B')$ are negligible. On the other hand, $A\cap B$ is a simple convex set with parameters $a=1$ and $d$ depending only $s$ as long as $A\cap B$ contains a ball of radius $1/n$, and if $A\cap B$ does not contain such a ball, both $\Pr(S_n^X\in A\cap B)$ and $\Pr(S_n^Y\in A\cap B)$ are also negligible.

Fix an integer $s>0$, and let $\mA^{\spa}(s)$ denote the class of all $s$-sparsely convex Borel sets in $\R^p$. We assume that the following condition is satisfied:

\begin{itemize}
\item[(M.1$''$)] $n^{-1} \sum_{i=1}^{n}\Ep[(v'X_{i})^2] \geq b$ {\em for all} $v\in\mathbb{S}^{p-1}$ {\em with} $\|v\|_0\leq s$.
\end{itemize}

Then we have the following proposition:
\begin{proposition}[High Dimensional CLT for Sparsely Convex Sets]\label{cor: sparsely convex sets}
Suppose that conditions {\em (M.1$''$)} and {\em (M.2)} are satisfied. Then under {\em (E.1)}, we have
\begin{equation}\label{eq: sparsely convex sets with mgf}
\rho_n(\mA^{\spa}(s)) 
\leq C D_{n}^{(1)},
\end{equation}
where the constant $C$ depends only on $b$ and $s$; while under {\em (E.2)}, we have
\begin{equation}\label{eq: sparsely convex sets with finite moments}
\rho_n(\mA^{\spa}(s)) 
\leq C \{ D_{n}^{(1)} + D_{n,q}^{(2)} \},
\end{equation}
where the constant $C$ depends only on $b$, $q$, and $s$.
\end{proposition}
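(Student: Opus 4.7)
The plan is to reduce the sparsely convex case to the simple convex case (Proposition \ref{cor: sparsely convex bodies}) via truncation plus an explicit polytope approximation that exploits the sparse structure of the constituent convex sets. Fix $A \in \mA^{\spa}(s)$ with sparse representation $A = \cap_{q=1}^{Q} A_{q}$, where $Q \leq p^{s}$ and each $A_{q}$ depends only on a subset $J_{q} \subset \{1,\dots,p\}$ with $|J_{q}| \leq s$. Set $R = p n^{5/2}$ and $B = \{w \in \R^{p}: \max_{j}|w_{j}| \leq R\}$. The first step is to dispose of the tails: by a union bound over $j=1,\dots,p$ combined with Markov/Chebyshev using the moment condition (M.2), both $\Pr(S_{n}^{X} \in B^{c})$ and $\Pr(S_{n}^{Y} \in B^{c})$ are bounded by $p \cdot B_{n}^{2}/R^{2} = O(n^{-5})$, which is absorbed into $D_{n}^{(1)}$.

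The main step is the treatment of $A \cap B$. The crucial observation is that each $A_{q} \cap B$, viewed in its $s$ main coordinates, is a bounded convex set in $\R^{s}$ contained in $[-R,R]^{s}$; by a classical covering argument on $\mathbb{S}^{s-1}$, such a set admits an approximation in Hausdorff distance $1/(nQ)$ by a polytope $A_{q}^{m}$ with at most $m_{q} \lesssim (nQR)^{s-1}$ facets, each facet corresponding to a unit outward normal $v$ with $\mathrm{supp}(v) \subset J_{q}$, so $\|v\|_{0} \leq s$. Intersecting these polytopes yields a polytope $A^{m} \subset A \cap B \subset A^{m,\,1/n}$ with $m \leq \sum_{q} m_{q} \leq (pn)^{d}$ for some $d = d(s)$, i.e.\ $A \cap B \in \mA^{\si}(1,d)$. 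The facet normals of $A^{m}$ all have $\|v\|_{0} \leq s$, so the transformed variables $\tilde{X}_{ij} = v_{j}' X_{i}$ satisfy (M.1$'$) by (M.1$''$), and satisfy (M.2$'$) and (E.1$'$)/(E.2$'$) with $B_{n}$ replaced by a multiple depending only on $s$, using the simple bound $|v'X_{i}| \leq \sqrt{s}\, \max_{j \in \mathrm{supp}(v)} |X_{ij}|$ together with (M.2) and (E.1)/(E.2). Proposition \ref{cor: sparsely convex bodies} then yields the claimed rate on $A \cap B$, provided $A \cap B$ is nondegenerate enough that the simple-convex-set framework applies.

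The remaining case is when $A \cap B$ contains no ball of radius $1/n$. By a standard convex-geometry argument (e.g., support of a John-ellipsoid-type direction), such a set is contained in some slab $H = \{w : |v'w - c| \leq 1/n\}$ with $v \in \mathbb{S}^{p-1}$ and $\|v\|_{0} \leq s$ (the sparsity inherited from the $A_{q}$'s). The slab $H$ itself lies in $\mA^{\si}(1,d')$ with $d'$ depending only on $s$, and the transformed variable $v' X_{i}$ satisfies the same conditions as above. Proposition \ref{cor: sparsely convex bodies} gives $|\Pr(S_{n}^{X} \in H) - \Pr(S_{n}^{Y} \in H)| \lesssim D_{n}^{(1)}$ (resp.\ $D_{n}^{(1)} + D_{n,q}^{(2)}$), while Gaussian anti-concentration on $\R$ combined with (M.1$''$) gives $\Pr(S_{n}^{Y} \in H) \lesssim 1/(n\sqrt{b}) = O(1/n)$, hence $\Pr(S_{n}^{X} \in H)$ is of the same order, and both are absorbed into the bound. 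Assembling the three contributions proves \eqref{eq: sparsely convex sets with mgf} and \eqref{eq: sparsely convex sets with finite moments}.

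The principal technical obstacle is the polytope-approximation step: getting the count $m$ of facets to depend only polynomially on $(pn)$ (with exponent depending only on $s$) requires exploiting both the sparsity of each $A_{q}$ (so the covering is done on $\mathbb{S}^{s-1}$, not $\mathbb{S}^{p-1}$) and the boundedness gained from truncating by $B$; the book-keeping of constants across the three cases, and verifying that the adjusted $B_{n}$ in conditions (M.2$'$)/(E.1$'$)/(E.2$'$) only changes the final bound by a factor depending on $s$, is the remaining routine but nontrivial part.
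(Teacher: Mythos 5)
Your truncation step and your handling of the nondegenerate case are essentially the paper's argument: after intersecting with the cube of radius $R=pn^{5/2}$ and discarding the tails, you recenter at a point around which $A$ contains a ball of radius $1/n$, approximate each sparse constituent $A_q$ by a polytope with at most $(pn)^{d(s)}$ facets whose outward normals are supported on the main components of $A_q$, verify (M.1$'$)--(E.2$'$) for the transformed variables with $B_n$ inflated by a factor depending only on $s$, and invoke Proposition \ref{cor: sparsely convex bodies}. The paper uses Barvinok's thrifty polytope approximation where you use a covering of $\mathbb{S}^{s-1}$; either device works, but note that both need the inradius lower bound to produce an \emph{inner} polytope $A^m\subset A$ with controlled facet count, which is exactly why the degenerate case must be treated separately.

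The gap is in the degenerate case. Split it as the paper does: (a) some individual $A_q$ contains no ball of radius $\varepsilon=1/n$; (b) every $A_q$ contains such a ball but $A=\cap_q A_q$ does not. In case (a) your slab idea is salvageable: Steinhagen's inequality applied to the thin $A_q$ \emph{in its $s$ main coordinates} yields a slab of width $C(s)/n$ whose normal is supported on those coordinates, hence $s$-sparse (the paper instead bounds $\Pr(S_n^Y\in A_q)$ by Lemma \ref{lem: Nazarov anti-concentration2} and transfers to $S_n^X$ by the $s$-dimensional Berry--Esseen theorem). In case (b), however, your claim that $A$ sits inside an $s$-sparse slab of width $O(1/n)$ is unsubstantiated, and the ``standard convex-geometry argument'' you invoke does not deliver it: John/Steinhagen in $\R^p$ controls the minimal width only up to a factor of order $\sqrt{p}$ times the inradius --- fatal when $p\gg n$ --- and the minimal-width direction of an intersection of sparse cylinders has no reason to be sparse. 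When no single $A_q$ is thin, there is no obvious sparse witness for the thinness of the intersection. The paper circumvents this entirely: it still builds the polytopes $A_q^m$ (each $A_q$ is fat, so Lemma \ref{lem: verifying condition C.j} applies), sets $A^0=\cap_{q}A_q^{m,1/n}\supset A$, observes that $\cap_q A_q^{m,-\varepsilon}=\emptyset$, and then applies Nazarov's hyperrectangle anti-concentration (Lemma \ref{lem: anti-concentration}) to the $m\le (pn)^d$ sparse linear functionals $v'S_n^Y$ to conclude $\Pr(S_n^Y\in A^0)\le C(\log^{1/2}(pn))/n$; Proposition \ref{cor: sparsely convex bodies} applied to $A^0$ transfers this to $S_n^X$. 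You need to replace your single-slab step in case (b) by an argument of this ``union of $(pn)^d$ sparse slabs'' type, or else supply a genuine proof of the sparse-slab containment, which is not a standard fact.
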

\begin{remark}[Dependence on $s$]
In many applications, it may be of interest to consider $s$-sparsely convex sets with $s=s_n$ depending on $n$ and potentially growing to infinity: $s=s_n\to\infty$. It is therefore interesting to derive the optimal dependence of the constant $C$ in (\ref{eq: sparsely convex sets with mgf}) and (\ref{eq: sparsely convex sets with finite moments}) on $s$. We leave this question for future work. \qed
\end{remark}



\section{Empirical and Multiplier Bootstrap Theorems}\label{sec: bootstrap CLT}
So far we  have shown that the probabilities $\Pr(S_n^X\in A)$ can be well approximated by the probabilities  $\Pr(S_n^Y\in A)$ under weak conditions for hyperrectangles $A \in \mA^{\re}$, simple convex sets $A\in\mA^{\si}(a,d)$, or sparsely convex sets $A\in\mA^{\spa}(s)$. 
In practice, however, the covariance matrix of $S_n^Y$ is typically unknown, and direct computation of  $\Pr(S_n^Y\in A)$ is infeasible. 
Hence, in this section, we derive high dimensional bootstrap theorems which allow us to approximate the probabilities $\Pr(S_n^Y\in A)$, and hence $\Pr(S_n^X\in A)$, by data-dependent techniques. We consider here multiplier and empirical bootstrap methods (we refer to \cite{PW93} for various versions of bootstraps). 

\subsection{Multiplier bootstrap}

We first consider the multiplier bootstrap. Let $e_1,\dots,e_n$ be a sequence of i.i.d. $N(0,1)$ random variables that are independent of $X_{1}^{n}= \{ X_{1},\dots,X_{n} \}$. Let $\bar{X}:=(\bar{X}_{1},\dots,\bar{X}_{p})':=\En[X_i]$, and consider the normalized sum:
\[
S_n^{eX}:=(S_{n1}^{eX},\dots,S_{np}^{eX})':=\frac{1}{\sqrt{n}}\sum_{i=1}^ne_i(X_i-\bar{X}).
\]
We are interested in bounding
\[
\rho_n^{MB}(\mA):=\sup_{A \in \mA}|\Pr(S_n^{eX}\in A \mid X_{1}^{n})-\Pr(S_n^Y\in A)|
\]
for $\mA=\mA^{\re}$, $\mA^{\spa}(s)$, or $\mA\subset \mA^{\si}(a,d)$. 

We begin with the case $\mA\subset \mA^{\si}(a,d)$. 
Let
\begin{align*}
\hat{\Sigma}:=n^{-1}\sum_{i=1}^n(X_i-\bar{X})(X_i-\bar{X})', \quad \Sigma:=n^{-1}\sum_{i=1}^n\Ep[X_iX_i'].
\end{align*}
Observe that $\Ep[S_{n}^{eX}(S_{n}^{eX})' \mid X_{1}^{n}] = \hat{\Sigma}$ and $\Ep[S_{n}^{Y}(S_{n}^{Y})'] = \Sigma$. For $\mA\subset \mA^{\si}(a,d)$, define
\[
\Delta_n(\mA):=\sup_{A\in\mA}\max_{v_1,v_2\in\mathcal{V}(A^m(A))}|v_1'(\hat{\Sigma}-\Sigma)v_2|.
\]
Then we have the following theorem for classes of simple convex sets.

\begin{theorem}[Abstract Multiplier Bootstrap Theorem for Simple Convex Sets]\label{thm: multiplier bootstrap CLT}
Let $\mA$ be a subclass of $\mA^{\si}(a,d)$ such that condition {\em (M.1$'$)} is satisfied for every $A \in \mA$. Then for every constant $\overline{\Delta}_n > 0$, on the event $\Delta_n(\mA)\leq \overline{\Delta}_n$, we have 
\[
\rho_n^{MB}(\mA)\leq C\left \{ \overline{\Delta}_n^{1/3}\log^{2/3} (pn) + n^{-1}\log^{1/2} (pn)\right \},
\]
where the constant $C$ depends only on $a,b$, and $d$.
\end{theorem}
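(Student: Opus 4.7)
The plan is to reduce this Gaussian--vs--Gaussian comparison for arbitrary simple convex sets to a Gaussian--vs--Gaussian comparison for convex polytopes with a controlled number of facets, and then handle the polytope approximation error by Gaussian anti-concentration. Fix any $A\in\mA$ and let $A^m=A^m(A)$ be its $m$-generated approximation with $m\leq (pn)^d$ facets and outward unit normals $\mathcal{V}(A^m)$, so that $A^m\subset A\subset A^{m,\epsilon}$ with $\epsilon=a/n$. Introduce $\tilde Z^{eX}:=(v'S_n^{eX})_{v\in\mathcal{V}(A^m)}$ and $\tilde Z^Y:=(v'S_n^Y)_{v\in\mathcal{V}(A^m)}$, and set $t_v:=\mS_{A^m}(v)$. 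Membership of $S_n^{eX}$ (resp.\ $S_n^Y$) in $A^m$ or $A^{m,\epsilon}$ is the event $\tilde Z^{eX}_v\leq t_v+\delta$ (resp.\ $\tilde Z^Y_v\leq t_v+\delta$) for all $v$, with $\delta\in\{0,\epsilon\}$. Conditionally on $X_1^n$, $\tilde Z^{eX}$ is centered Gaussian with covariance $(v_1'\hat\Sigma v_2)$ and $\tilde Z^Y$ is centered Gaussian with covariance $(v_1'\Sigma v_2)$; on the event $\{\Delta_n(\mA)\leq\overline\Delta_n\}$ these two covariance matrices differ entrywise by at most $\overline\Delta_n$, and by (M.1$'$) their diagonal entries are bounded below by $b$.

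Next I would invoke a Gaussian-to-Gaussian comparison inequality of Chernozhukov--Chetverikov--Kato type for rectangles, available from the authors' earlier work cited in the introduction, which states that for two centered Gaussian vectors in $\R^m$ whose covariance matrices differ entrywise by at most $\overline\Delta_n$ and whose diagonal entries are bounded below by $b$, the supremum over all rectangles of the absolute difference of their c.d.f.s is at most $C(b)\,\overline\Delta_n^{1/3}(\log m)^{2/3}$. Applied to the rectangles $\{z\in\R^m: z_v\leq t_v+\delta,\ v\in\mathcal{V}(A^m)\}$ for $\delta\in\{0,\epsilon\}$, and using $\log m\leq d\log(pn)$, this gives
\[
\bigl|\Pr(S_n^{eX}\in A^{m,\delta}\mid X_1^n)-\Pr(S_n^Y\in A^{m,\delta})\bigr|\leq C(b,d)\,\overline\Delta_n^{1/3}\log^{2/3}(pn).
\]

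To close the gap between $A^m$ and $A^{m,\epsilon}$, I would apply Nazarov's Gaussian anti-concentration inequality (the lower variance bound from (M.1$'$) again being the key input) to $\tilde Z^Y$: the set $A^{m,\epsilon}\setminus A^m$ is contained in a union of $m$ slabs of width $\epsilon=a/n$ along unit directions, hence
\[
\Pr(S_n^Y\in A^{m,\epsilon})-\Pr(S_n^Y\in A^m)\leq C(a,b,d)\,n^{-1}\log^{1/2}(pn).
\]
Combining the polytope sandwich
\[
\Pr(S_n^{eX}\in A^m\mid X_1^n)\leq \Pr(S_n^{eX}\in A\mid X_1^n)\leq \Pr(S_n^{eX}\in A^{m,\epsilon}\mid X_1^n),
\]
its analogue for $S_n^Y$, and the two displays above, then taking the supremum over $A\in\mA$, produces the stated bound.

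The main obstacle is the clean application of the Gaussian-to-Gaussian rectangle comparison at rate $\overline\Delta_n^{1/3}(\log m)^{2/3}$: this inequality is proved by a Slepian--Stein interpolation combined with a smooth-max approximation of the indicator of a rectangle, and the exponent $1/3$ arises from the usual trade-off between the smoothing bandwidth (whose inverse cube appears in the Hessian estimate of the smooth max) and a Nazarov-type anti-concentration bias of the smooth approximation. This is precisely where the $\overline\Delta_n^{1/3}\log^{2/3}(pn)$ rate in the theorem is generated, and the $n^{-1}\log^{1/2}(pn)$ term is the anti-concentration cost of widening the polytope from $A^m$ to $A^{m,\epsilon}$.
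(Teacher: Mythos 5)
Your proposal is correct and follows essentially the same route as the paper: reduce to the rectangle event for the transformed Gaussian vectors $(v'S_n^{eX})_{v\in\mathcal{V}(A^m)}$ and $(v'S_n^{Y})_{v\in\mathcal{V}(A^m)}$, apply the Gaussian-to-Gaussian rectangle comparison at rate $\overline{\Delta}_n^{1/3}\log^{2/3}m$ (which the paper likewise obtains by the smooth-max/Slepian--Stein argument, citing a small modification of Theorem 1 of their earlier comparison paper), and absorb the $\epsilon=a/n$ widening of the polytope via Nazarov's anti-concentration inequality under (M.1$'$). The only caution is that the phrase ``union of $m$ slabs'' must not be read as a union bound (which would give $m\epsilon$); the correct tool, which you do invoke, is Nazarov's inequality applied to the rectangle shift, yielding $C\epsilon\log^{1/2}m$.
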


\begin{remark}[Case of hyperrectangles]
From the proof of Theorem \ref{thm: multiplier bootstrap CLT}, we have the following bound when $\mA = \mA^{\re}$: under (M.1), for every constant $\overline{\Delta}_n > 0$, on the event $\Delta_{n,r}\leq \overline{\Delta}_n$, we have
\[
\rho_n^{MB}(\mA^{\re})\leq C\overline{\Delta}_n^{1/3}\log^{2/3} p,
\]
where the constant $C$ depends only on $b$, and $\Delta_{n,r}$ is defined by 
\[
\Delta_{n,r}=\max_{1\leq j,k\leq p}|\hat{\Sigma}_{j k}-\Sigma_{jk}|,
\]
where $\hat{\Sigma}_{jk}$ and $\Sigma_{jk}$ are the $(j,k)$-th elements of $\hat{\Sigma}$ and $\Sigma$, respectively.
\qed
\end{remark}

Next, we derive more explicit bounds on $\rho_{n}^{MB} (\mA)$ for $\mA\subset \mA^{\si}(a,d)$ under suitable moment conditions as in the previous section. 
We will consider sets $A\in\mA^{\si}(a,d)$ that satisfy the following condition:
\medskip
\begin{itemize}
\item[(S)] \textit{The set $A^m=A^m(A)$ satisfies $\|v\|_0\leq s$ for all $v\in\mathcal{V}(A^m)$.}
\end{itemize}
Condition (S) requires that the outward unit normal vectors to the hyperplanes forming the $m$-generated convex set $A^m=A^m(A)$ are sparse.
Assuming that (S) is satisfied for all $A\in\mA\subset \mA^{\si}(a,d)$ helps to control $\Delta_{n}(\mA)$. 

For $\alpha \in (0,e^{-1})$, define 
\[
D_{n}^{(1)}(\alpha) = \left(\frac{B_n^2(\log^5 (p n))\log^2(1/\alpha)}{n}\right)^{1/6}, \ D_{n,q}^{(2)}(\alpha) = \left (\frac{B_n^2\log^3 (pn)}{\alpha^{2/q}n^{1-2/q}} \right)^{1/3}.
\]
Then we have the following proposition. 
\begin{proposition}[Multiplier Bootstrap for Simple Convex Sets]\label{cor: MB corollary}
Let $\alpha\in(0,e^{-1})$ be a constant, and let $\mA$ be a subclass of $\mA^{\si}(a,d)$ such that conditions {\em (S)} and {\em (M.1$'$)} are satisfied for every $A \in \mA$. In addition, suppose that condition {\em (M.2)} is satisfied. Then under {\em (E.1)}, we have with probability at least $1-\alpha$,
\[
\rho_n^{MB}(\mA)\leq CD_{n}^{(1)}(\alpha),
\]
where the constant $C$ depends only on $a,b,d$ and $s$; while under {\em (E.2)}, we have with probability at least $1-\alpha$,
\[
\rho_n^{MB}(\mA)\leq C\{ D_{n}^{(1)}(\alpha) + D_{n,q}^{(1)}(\alpha) \},
\]
where the constant $C$ depends only on $a,b,d,q$, and $s$.
\end{proposition}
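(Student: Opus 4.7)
The plan is to deduce this proposition from the abstract multiplier bootstrap theorem (Theorem \ref{thm: multiplier bootstrap CLT}) by exhibiting a high-probability upper bound on the random quantity $\Delta_n(\mA)$. That theorem supplies, on any event of the form $\{\Delta_n(\mA)\le \overline{\Delta}_n\}$, the deterministic inequality
\[
\rho_n^{MB}(\mA)\le C\bigl\{\overline{\Delta}_n^{1/3}\log^{2/3}(pn) + n^{-1}\log^{1/2}(pn)\bigr\},
\]
in which the remainder $n^{-1}\log^{1/2}(pn)$ is of strictly smaller order than $D_{n}^{(1)}(\alpha)$ and may be absorbed into the constant. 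It is therefore enough to construct $\overline{\Delta}_n$ with $\Pr(\Delta_n(\mA)>\overline{\Delta}_n)\le\alpha$ whose cube-root, multiplied by $\log^{2/3}(pn)$, is dominated by $D_{n}^{(1)}(\alpha)$ under (E.1) and by $D_{n}^{(1)}(\alpha)+D_{n,q}^{(1)}(\alpha)$ under (E.2).

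The first reduction exploits the sparsity assumption (S). For any $A\in\mA$, each $v_1,v_2\in\mathcal{V}(A^m(A))$ is a unit vector supported on at most $s$ coordinates, so $\sum_j|v_{1,j}|\le\sqrt s$ by Cauchy--Schwarz, and therefore
\[
|v_1'(\hat\Sigma-\Sigma)v_2|\le s\Delta_{n,r},\qquad \Delta_{n,r}:=\max_{1\le j,k\le p}|\hat\Sigma_{jk}-\Sigma_{jk}|.
\]
This gives $\Delta_n(\mA)\le s\Delta_{n,r}$ uniformly over the class, reducing everything to controlling the single scalar quantity $\Delta_{n,r}$. Writing $\hat\Sigma_{jk}-\Sigma_{jk} = n^{-1}\sum_i(X_{ij}X_{ik}-\Ep[X_{ij}X_{ik}])-\bar X_j\bar X_k$, the correction $\bar X_j\bar X_k$ is of order $O_{\Pr}(\log p/n)$ uniformly in $j,k$ and is negligible relative to the target, so the real task is to bound the maximum over $p^2$ pairs of centered sample means of the products $X_{ij}X_{ik}$.

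Under (M.2), each such summand has variance at most $B_n^2$. Under (E.1), it has $\psi_{1/2}$-Orlicz quasi-norm of order $B_n^2$, and a sub-exponential Bernstein inequality (applied to the truncated part after discarding an event of probability $o(1/n)$, as in \cite{CCK12a}), together with a union bound over $p^2$ pairs, yields with probability at least $1-\alpha$,
\[
\Delta_{n,r}\le C\bigl(B_n\sqrt{\log(p/\alpha)/n}+B_n^2\log^2(p/\alpha)/n\bigr).
\]
Under (E.2), one truncates $X_{ij}$ at a level $T=B_n(n/\alpha)^{1/q}$, so that Markov's inequality applied to $\max_{i,j}|X_{ij}|^q$ gives $\Pr(\max_{i,j}|X_{ij}|>T)\le\alpha/2$; applying bounded-summand Bernstein to the truncated products (bounded by $T^2$) and combining via a union bound gives an analogous bound with an extra polynomial-tail contribution of order $B_n^2/(\alpha^{2/q}n^{1-2/q})$ up to logarithms. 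Setting $\overline{\Delta}_n=s\Delta_{n,r}$ and substituting into Theorem \ref{thm: multiplier bootstrap CLT}, using $\log(p/\alpha)\le 2\log(pn)\log(1/\alpha)$ for $\alpha\in(0,e^{-1})$ and $p\ge 3$, one recovers $CD_{n}^{(1)}(\alpha)$ under (E.1) and $C\{D_{n}^{(1)}(\alpha)+D_{n,q}^{(1)}(\alpha)\}$ under (E.2), with constants depending only on the indicated parameters.

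The main obstacle is the bookkeeping in the (E.2) step: the truncation level $T$ must be tuned so that the probability budget $\alpha$ is split correctly between the Bernstein step (for the truncated products) and the Markov step (for the residual maximum of $|X_{ij}|$), and so that after cube-rooting and multiplying by $\log^{2/3}(pn)$ the resulting polynomial-tail contribution fits exactly inside $D_{n,q}^{(1)}(\alpha)$ with a constant depending only on $a,b,d,q,s$. The (E.1) case is comparatively routine once the sub-Weibull (or truncated-Bernstein) tail bound for products of coordinates is in hand.
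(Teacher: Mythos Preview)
Your proposal is correct and follows essentially the same route as the paper: invoke Theorem~\ref{thm: multiplier bootstrap CLT}, use condition~(S) to reduce $\Delta_n(\mA)$ to a constant multiple of $\Delta_{n,r}=\max_{j,k}|\hat\Sigma_{jk}-\Sigma_{jk}|$, split off the $\bar X_j\bar X_k$ correction, and then establish a high-probability bound on the maximum of the centered product sums. The only methodological difference is in the concentration step: the paper bounds $\Ep[\Delta_{n,r}^{(1)}]$ via the maximal inequality (Lemma~\ref{lem: maximal ineq}) and then applies the Fuk--Nagaev-type deviation inequality (Lemma~\ref{lem: fuk-nagaev}) with $\beta=1/2$ under (E.1) and $s=q/2$ under (E.2), whereas you propose Bernstein after truncation plus a union bound; these are equivalent devices here (Fuk--Nagaev packages the truncation argument) and yield the same final orders, so either route completes the proof.
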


\begin{remark}[Bootstrap theorems in a.s. sense]
Proposition \ref{cor: MB corollary} leads to the following multiplier bootstrap theorem in the a.s. sense. 
Suppose that $\mA$ is a subclass of $\mA^{\si}(a,d)$ as in Proposition \ref{cor: MB corollary} and that (M.2) is satisfied. We allow $p=p_{n} \to \infty$ and $B_{n} \to \infty$ as $n \to \infty$  but assume that $a,b,d,q,s$ are all fixed. 
Then by applying Proposition \ref{cor: MB corollary} with $\alpha=\alpha_n=n^{-1}(\log n)^{-2}$, together with the Borel-Cantelli lemma (note that $\sum_{n=4}^\infty n^{-1}(\log n)^{-2}<\infty$), we have with probability one
\[
\label{eq: almost sure bootstrap}
\rho_n^{M B}(\mA) 
= \begin{cases}
O\{ D_{n}^{(1)}(\alpha_n) \} & \text{under (E.1)} \\
O\{ D_{n}^{(1)}(\alpha_n) \vee D_{n,q}^{(2)}(\alpha_{n}) \} & \text{under (E.2)},
\end{cases}
\]
and it is routine to verify that $D_{n}^{(1)}(\alpha_n) = o(1)$ if $B_{n}^{2} \log^{7} (pn) = o(n)$, and $D_{n,q}^{(2)}(\alpha_{n})=o(1)$ if $B_n^2(\log^3 (p n))\log^{4/q}n=o(n^{1-4/q})$. Similar conclusions also follow from other propositions and corollaries below dealing with different classes of sets and approximations based on multiplier and empirical bootstraps.\qed
\end{remark}

When each $X_{i}$ obeys a log-concave distribution, we have the following corollary analogous to Corollary \ref{cor: log-concave distributions}. In this case, instead of condition (S), we will assume that $\mA\subset \mA^{\si}(a,d)$ is such that the cardinality of the set $\cup_{A\in\mA}\mathcal{V}(A^m(A))$ is at most $(p n)^d$.

\begin{corollary}[Multiplier Bootstrap for Simple Convex Sets with Log-concave Distributions]
\label{cor: MB corollary log-concave distributions}
Let $\alpha\in(0,e^{-1})$ be a constant, and let $\mA$ be a subclass of $\mA^{\si}(a,d)$ such that the cardinality of the set $\cup_{A\in\mA}\mathcal{V}(A^m(A))$ is at most $(p n)^d$. Suppose that each $X_{i}$ obeys a centered log-concave distribution on $\R^p$ and that all the eigenvalues of $\Ep[X_i X_i']$ are bounded from below by a constant $k_1>0$ and from above by a constant $k_2\geq k_1$ for every $i=1,\dots,n$. Then with probability at least $1-\alpha$,
\[
\rho_n^{MB}(\mA)\leq C n^{-1/6} (\log^{5/6}(p n))\log^{1/3} (1/\alpha), 
\]
where the constant $C$ depends only on $a,d,k_1$, and $k_2$.
\end{corollary}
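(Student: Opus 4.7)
The plan is to combine the abstract bound of Theorem~\ref{thm: multiplier bootstrap CLT} with a concentration inequality for $\Delta_n(\mA)$ that exploits log-concavity.

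First, I would verify that Theorem~\ref{thm: multiplier bootstrap CLT} applies. Set $V:=\bigcup_{A\in\mA}\mathcal{V}(A^m(A))$, so $|V|\leq (pn)^d$ by assumption. Each $v\in V$ is a unit vector, and the eigenvalue lower bound gives $v'\Ep[X_iX_i']v\geq k_1$, so condition (M.1$'$) holds with $b=k_1$. On the event $\{\Delta_n(\mA)\leq \overline{\Delta}_n\}$,
\[
\rho_n^{MB}(\mA)\leq C\left\{\overline{\Delta}_n^{1/3}\log^{2/3}(pn)+n^{-1}\log^{1/2}(pn)\right\},
\]
with $C$ depending only on $a,d,k_1$; the remaining task is to exhibit a suitable $\overline{\Delta}_n$ that holds with probability at least $1-\alpha$.

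Second, I would control $\Delta_n(\mA)=\max_{v_1,v_2\in V}|v_1'(\hat{\Sigma}-\Sigma)v_2|$. A direct expansion gives
\[
v_1'(\hat{\Sigma}-\Sigma)v_2=\En\!\left[(v_1'X_i)(v_2'X_i)-\Ep[(v_1'X_i)(v_2'X_i)]\right]-(v_1'\bar X)(v_2'\bar X).
\]
Log-concavity of $X_i$ with $\Ep[X_iX_i']\preceq k_2 I$ implies that the one-dimensional marginal $v'X_i$ is log-concave with variance at most $k_2$, and standard moment bounds yield $\|v'X_i\|_{\psi_1}\leq C\sqrt{k_2}$. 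The products $(v_1'X_i)(v_2'X_i)$ are therefore sub-exponential of order $\psi_{1/2}$ with norm at most $Ck_2$. A Bernstein-type inequality for $\psi_{1/2}$ sums (implemented via truncation plus classical Bernstein), combined with a union bound over at most $|V|^2\leq(pn)^{2d}$ pairs, yields
\[
\Delta_n(\mA)\leq C\sqrt{\frac{\log(pn)+\log(1/\alpha)}{n}}=:\overline{\Delta}_n
\]
with probability at least $1-\alpha$; the cross term $(v_1'\bar X)(v_2'\bar X)$ is $O(\log(pn/\alpha)/n)$ uniformly in $v_1,v_2\in V$ on the same high-probability event and is absorbed into the same rate. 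Substituting into the displayed bound gives
\[
\overline{\Delta}_n^{1/3}\log^{2/3}(pn)\leq Cn^{-1/6}\bigl(\log^{1/6}(pn)+\log^{1/6}(1/\alpha)\bigr)\log^{2/3}(pn),
\]
and since $\alpha<e^{-1}$ forces $\log(1/\alpha)\geq 1$ while $p\geq 3,n\geq 4$ force $\log(pn)\geq 1$, both summands are dominated by $Cn^{-1/6}\log^{5/6}(pn)\log^{1/3}(1/\alpha)$; the additive $n^{-1}\log^{1/2}(pn)$ term is of strictly smaller order, yielding the claimed bound.

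The main obstacle is the concentration step: products of log-concave linear functionals are only $\psi_{1/2}$, so classical Bernstein does not apply directly. I would address this by truncating each $|v_j'X_i|$ at level $Ck_2^{1/2}\log(|V|^2/\alpha)$---the $\psi_1$-tails ensure that the truncated and original variables agree outside a set of probability $\ll\alpha$ even after the union bound---and then applying Bernstein to the bounded centered products, absorbing the truncation bias into a lower-order additive term. Once this per-pair tail bound is in place, the union bound and the substitution into Theorem~\ref{thm: multiplier bootstrap CLT} are routine.
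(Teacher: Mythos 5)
Your overall architecture is the paper's: verify (M.1$'$) with $b=k_1$ from the eigenvalue lower bound, use Borell's lemma to get $\|v'X_i\|_{\psi_1}\leq C\sqrt{k_2}$ for unit vectors $v$, reduce $\Delta_n(\mA)$ to a deviation bound for the empirical covariance over the at most $(pn)^d$ directions in $V=\cup_{A\in\mA}\mathcal{V}(A^m(A))$, and substitute into Theorem \ref{thm: multiplier bootstrap CLT}. The paper does exactly this by forming $\check{X}_i=(v'X_i)_{v\in V}$ of dimension $\check p\leq(pn)^d$ and reusing the case (E.1) argument of Proposition \ref{cor: MB corollary} (Lemma \ref{lem: maximal ineq} for the expectation plus Lemma \ref{lem: fuk-nagaev}(i) with $\beta=1/2$ for the deviation), rather than a pairwise union bound; that difference is cosmetic.

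The genuine gap is your concentration claim $\Delta_n(\mA)\leq C\sqrt{(\log(pn)+\log(1/\alpha))/n}$ with probability $1-\alpha$. The summands $(v_1'X_i)(v_2'X_i)$ are only $\psi_{1/2}$, and in the heavy-tail regime a level-$(1-\alpha)$ deviation bound must pay $\log^2(1/\alpha)$ rather than $\log(1/\alpha)$; this is visible in your own implementation. Writing $\Lambda:=\log(pn)+\log(1/\alpha)$, your truncation level is $L\asymp\Lambda$ (it must in fact be $\asymp\log(n|V|^2/\alpha)$ to control all $n|V|$ variables, but that is the same order), the truncated products are bounded by $L^2\asymp\Lambda^2$, and Bernstein at confidence $\alpha/|V|^2$ then yields the additive term $L^2\log(|V|^2/\alpha)/n\asymp\Lambda^3/n$, which is \emph{not} absorbed into $\sqrt{\Lambda/n}$ unless $\Lambda^5\lesssim n$. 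The legitimate ``otherwise trivial'' reduction only gives $\log^5(pn)\log^2(1/\alpha)\leq n$, which permits, e.g., $\log(pn)=n^{0.01}$ and $\log(1/\alpha)=n^{0.3}$; there the corollary's bound is $n^{-0.058\ldots}=o(1)$ (nontrivial), while the contribution $(\Lambda^3/n)^{1/3}\log^{2/3}(pn)\asymp n^{-0.027\ldots}$ of your truncation term exceeds it by a diverging factor. So the step ``absorbing the truncation bias into a lower-order additive term'' fails. The repair is to claim only the weaker bound the $\psi_{1/2}$ tails actually allow, $\overline{\Delta}_n\asymp\sqrt{\log(pn)\log^2(1/\alpha)/n}$, established via Lemma \ref{lem: fuk-nagaev}(i) with $\beta=1/2$ as in the proof of Proposition \ref{cor: MB corollary}; this weaker $\overline{\Delta}_n$ still gives $\overline{\Delta}_n^{1/3}\log^{2/3}(pn)\leq Cn^{-1/6}(\log^{5/6}(pn))\log^{1/3}(1/\alpha)$, which is precisely why the corollary's bound carries the factor $\log^{1/3}(1/\alpha)$ rather than $\log^{1/6}(1/\alpha)$.
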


When $\mA=\mA^{\re}$, we have the following corollary.
\begin{corollary}[Multiplier Bootstrap for Hyperrectangles]\label{cor: MB for rectangles explicit}
Let $\alpha\in(0,e^{-1})$ be a constant, and suppose that conditions {\em (M.1)} and {\em (M.2)} are satisfied. Then under {\em (E.1)}, we have with probability at least $1-\alpha$,
\[
\rho_n^{MB}(\mA^{\re})\leq CD_{n}^{(1)}(\alpha),
\]
where the constant $C$ depends only on $b$; while under {\em (E.2)}, we have with probability at least $1-\alpha$,
\[
\rho_n^{MB}(\mA^{\re})\leq C\{ D_{n}^{(1)}(\alpha) + D_{n,q}^{(1)}(\alpha) \},
\]
where the constant $C$ depends only on $b$ and $q$.
\end{corollary}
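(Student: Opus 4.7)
The plan is to reduce the corollary to the hyperrectangle remark following Theorem~\ref{thm: multiplier bootstrap CLT}, which states that under (M.1), on the event $\Delta_{n,r}\leq\overline{\Delta}_n$,
\[
\rho_n^{MB}(\mA^{\re})\leq C\,\overline{\Delta}_n^{1/3}\log^{2/3}p,
\]
and then to establish a sharp high-probability bound on $\Delta_{n,r}=\max_{1\leq j,k\leq p}|\hat{\Sigma}_{jk}-\Sigma_{jk}|$ under (M.2) and each of (E.1) and (E.2). Once an upper bound $\overline{\Delta}_n$ on $\Delta_{n,r}$ holding with probability at least $1-\alpha$ is in hand, the result follows by substitution into the display above.

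To bound $\Delta_{n,r}$, I would decompose
\[
\hat{\Sigma}_{jk}-\Sigma_{jk}=\bigl(\En[X_{ij}X_{ik}]-\En[\Ep(X_{ij}X_{ik})]\bigr)-\bar{X}_j\bar{X}_k,
\]
and control the two pieces uniformly in $(j,k)$. The leading term is a centered average of the variables $Z_{i,jk}:=X_{ij}X_{ik}$, and (M.2) with $k=2$ yields the variance bound $\Ep[Z_{i,jk}^2]\leq B_n^2$. The correction $\bar{X}_j\bar{X}_k$ is of smaller order: standard concentration gives $\max_j|\bar{X}_j|=O_{P}(B_n\sqrt{\log(p/\alpha)/n})$, so its square is a strictly lower-order term compared to the first piece.

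Under (E.1), the variables $Z_{i,jk}$ are sub-exponential with $\|Z_{i,jk}\|_{\psi_{1/2}}\lesssim B_n^2$. Applying Bernstein's inequality to the centered average and taking a union bound over the $p^2$ pairs $(j,k)$ yields, with probability at least $1-\alpha$,
\[
\overline{\Delta}_n \;\lesssim\; B_n\sqrt{\frac{\log(p/\alpha)}{n}} \;+\; \frac{B_n^2\log^2(p/\alpha)}{n}.
\]
Plugging the leading term into $\overline{\Delta}_n^{1/3}\log^{2/3}p$ and using $\log(p/\alpha)\leq \log(pn)\log(1/\alpha)^2$ up to constants gives $CD_n^{(1)}(\alpha)$, while the sub-exponential remainder is of strictly smaller order. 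Under (E.2), I would use a Fuk--Nagaev-type / truncation maximal inequality (of the sort developed in the appendix of \cite{CCK12a}): for independent random variables with a uniform bound on $\Ep[(\max_{j,k}|Z_{i,jk}|)^{q/2}]\leq 2B_n^q$, the high-probability bound becomes
\[
\overline{\Delta}_n \;\lesssim\; B_n\sqrt{\frac{\log(p/\alpha)}{n}} \;+\; \frac{B_n^2\log(p/\alpha)}{\alpha^{2/q}n^{1-2/q}}.
\]
Substituting into the abstract bound produces the two terms $D_n^{(1)}(\alpha)$ and $D_{n,q}^{(2)}(\alpha)$.

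The main obstacle is arithmetic rather than conceptual: matching the exponents so that the composed bound $\overline{\Delta}_n^{1/3}\log^{2/3}p$ reproduces the powers of $\log(pn)$, $\log(1/\alpha)$, $B_n$, and $n$ appearing in $D_n^{(1)}(\alpha)$ and $D_{n,q}^{(2)}(\alpha)$. This requires care in choosing the correct form of the concentration inequality (subgaussian versus subexponential regime in Bernstein, and the truncation level in the Fuk--Nagaev bound) and in absorbing the lower-order corrections (from $\bar{X}_j\bar{X}_k$ and from the Bernstein/Fuk--Nagaev remainder) into the leading terms by invoking (M.2). None of this is conceptually subtle; the work is in verifying that the lower-order pieces are dominated under the stated ranges $\alpha\in(0,e^{-1})$, $n\geq 4$, $p\geq 3$.
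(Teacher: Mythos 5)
Your proposal is correct and follows essentially the same route as the paper: the paper's own proof of this corollary is a one-line reduction to Proposition \ref{cor: MB corollary} (noting that hyperrectangles satisfy (C), (S), and (M.1$'$) with $a=0$, $d=1$, $s=1$), and the proof of that proposition is precisely your argument --- apply the abstract multiplier bootstrap bound, decompose $\hat{\Sigma}-\Sigma$ into the centered average of $X_{ij}X_{ik}$ plus the $\bar{X}_j\bar{X}_k$ correction, and control $\Delta_{n,r}$ via the maximal inequality of Lemma \ref{lem: maximal ineq} together with the $\psi_{1/2}$ (under (E.1)) and moment-type (under (E.2)) deviation bounds of Lemma \ref{lem: fuk-nagaev}. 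The only detail worth flagging is that the absorption of the lower-order remainders is done in the paper not via (M.2) but by assuming without loss of generality that $B_n^2\log^5(pn)\log^2(1/\alpha)\leq n$, since otherwise $D_n^{(1)}(\alpha)\geq 1$ and the bound is trivial.
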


Finally, we derive explicit bounds on $\rho_{n}^{MB}(\mA)$ in the case where $\mA$ is the class of all $s$-sparsely convex sets: $\mA = \mA^{\spa}(s)$.
\begin{proposition}[Multiplier Bootstrap for Sparsely Convex Sets]
\label{cor: MB corollary sparsely convex sets}
Let $\alpha\in(0,e^{-1})$ be a constant. Suppose that conditions {\em (M.1$''$)} and {\em (M.2)} are satisfied. Then under {\em (E.1)}, we have with probability at least $1-\alpha$,
\begin{equation}\label{eq: MB result E1 sparse}
\rho_n^{MB}(\mA^{\spa}(s)) \leq CD_{n}^{(1)}(\alpha),
\end{equation}
where the constant $C$ depends only on $b$ and $s$; while under {\em (E.2)}, we have with probability at least $1-\alpha$,
\begin{equation}\label{eq: MB result E2 sparse}
\rho_n^{MB}(\mA^{\spa}(s))\leq C\{ D_{n}^{(1)}(\alpha) + D_{n,q}^{(2)}(\alpha) \},
\end{equation}
where the constant $C$ depends only on $b,s$, and $q$.
\end{proposition}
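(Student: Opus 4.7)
The plan is to deduce this from Proposition ``Multiplier Bootstrap for Simple Convex Sets'' via the cube-splitting reduction already used in the CLT analogue (Proposition ``High Dimensional CLT for Sparsely Convex Sets''). Fix $A\in\mA^{\spa}(s)$ with sparse representation $A=\bigcap_{q=1}^Q A_q$, $Q\leq p^s$, each $A_q$ depending on at most $s$ coordinates. Set $R=pn^{5/2}$ and $B=\{w\in\R^p:\max_j |w_j|\leq R\}$. On the event $\{\max_j \hat{\Sigma}_{jj}\leq C_0 B_n^2\}$, which holds with probability at least $1-\alpha/4$ by a Fuk-Nagaev/Bernstein concentration inequality for the diagonal entries of $\hat{\Sigma}$ under (M.2) and (E.1)/(E.2), a coordinate-wise Gaussian tail bound applied conditionally on $X_1^n$ gives $\Pr(S_n^{eX}\in B^c\mid X_1^n)=o(n^{-1})$; the analogous unconditional bound for $S_n^Y$ is immediate. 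Hence it suffices to compare $\Pr(S_n^{eX}\in A\cap B\mid X_1^n)$ with $\Pr(S_n^Y\in A\cap B)$.

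For the main part, one invokes a dichotomy: either $A\cap B$ contains no Euclidean ball of radius $1/n$, in which case it sits inside an $O(1/n)$-neighborhood of a $(p-1)$-dimensional affine subspace and both probabilities of interest are $o(D_n^{(1)}(\alpha))$ by the anti-concentration bounds already used in the unconditional CLT step, or $A\cap B$ admits an approximation with precision $1/n$ by an $m$-generated convex polytope $A^m$ whose facet normals are all $s$-sparse and with $m\leq (pn)^{d(s)}$. The polytope is built by a Bronshteyn-Ivanov-style approximation of each $A_q\cap B$ inside its $s$ relevant coordinates (each such restriction lives in a cube of side $2R$ in $\R^s$, hence is approximable to precision $1/(Qn)$ by a polytope with $(Rn)^{O(s)}$ facets), followed by intersecting over $q=1,\dots,Q$ and the $2p$ half-spaces of $B$. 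The resulting class $\mA'$ of approximating polytopes lies in $\mA^{\si}(1,d(s))$ and satisfies condition (S) with sparsity $s$.

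Now Proposition ``Multiplier Bootstrap for Simple Convex Sets'' is applied to $\mA'$. Its hypotheses reduce to: (S) (by construction), (M.2) and (E.1)/(E.2) (which are our assumptions on the original $X_i$), and (M.1$'$) on the transformed vectors $\tilde{X}_i=(v'X_i)_{v\in\mathcal{V}(A^m)}$; the last of these follows from (M.1$''$) since $\En[(v'X_i)^2]\geq b$ for every unit $s$-sparse $v$. The proposition's conclusion requires a high-probability bound $\Delta_n(\mA')\leq\bar{\Delta}_n$. Under (S), $\Delta_n(\mA')\leq C_s\max_{J\subset\{1,\dots,p\},\,|J|\leq 2s}\|(\hat{\Sigma}-\Sigma)_{J\times J}\|_{\mathrm{op}}$, which by a matrix Fuk-Nagaev/Bernstein bound on each $2s\times 2s$ block, together with a union bound over $\binom{p}{2s}\leq p^{2s}$ supports, yields $\bar{\Delta}_n$ whose plug-in into $\bar{\Delta}_n^{1/3}\log^{2/3}(pn)$ matches $D_n^{(1)}(\alpha)$ under (E.1) and $D_n^{(1)}(\alpha)+D_{n,q}^{(2)}(\alpha)$ under (E.2), after absorbing $s$-dependent factors into the constant $C=C(b,s)$ or $C(b,s,q)$.

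The main obstacle is this uniform control of $\Delta_n(\mA^{\spa}(s))$: one must simultaneously bound $|v_1'(\hat{\Sigma}-\Sigma)v_2|$ over all pairs of unit $s$-sparse vectors $v_1,v_2\in\R^p$ at the correct rate, while the combinatorial factor $\log\binom{p}{s}\leq s\log p$ arising from the union bound, as well as the $s$-dependent inflation of constants coming from sparse moment/Orlicz estimates on $v'X_i$, must be absorbed into the existing $\log(pn)$ factors of $D_n^{(1)}(\alpha)$, $D_{n,q}^{(2)}(\alpha)$ and into $C$. Once this bookkeeping is handled the result follows directly from Proposition ``Multiplier Bootstrap for Simple Convex Sets''.
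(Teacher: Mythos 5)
Your overall architecture (cut off the tail outside the cube $B=\{w:\max_j|w_j|\leq R\}$ with $R=pn^{5/2}$, approximate the bounded part by sparse-normal polytopes via Barvinok/Lemma \ref{lem: verifying condition C.j}, feed the resulting subclass of $\mA^{\si}(a,d)$ satisfying (S) into Proposition \ref{cor: MB corollary}, and control $\Delta_n$ by reducing to entrywise covariance deviations over sparse supports) matches the paper's proof, and your block-operator-norm route to $\Delta_n(\mA')$ is an acceptable, if heavier, substitute for the paper's one-line bound $\Delta_n(\mA')\leq C_s\Delta_{n,r}$.

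The genuine gap is in your degenerate branch of the dichotomy. You assert that if $A\cap B$ contains no Euclidean ball of radius $1/n$, then it lies in an $O(1/n)$-neighborhood of a hyperplane and both probabilities are negligible by anti-concentration. This is false for convex sets in high dimensions: by Steinhagen's inequality the best one can say is that inradius $<1/n$ forces minimal width $O(\sqrt{p}/n)$ (the regular simplex is essentially extremal), which is not small when $p\gg n^2$; moreover, the normal direction of that thin slab need not be $s$-sparse, so condition (M.1$''$) provides no lower bound on $\Var(v'S_n^Y)$ or $\Var(v'S_n^{eX}\mid X_1^n)$ in that direction and neither Lemma \ref{lem: anti-concentration} nor Lemma \ref{lem: Nazarov anti-concentration2} can be invoked at the rate you claim. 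The paper repairs this by exploiting sparsity \emph{inside} the degenerate case, splitting it further: (a) if some single $A_q$ contains no $\varepsilon$-ball, apply Nazarov's convex anti-concentration (Lemma \ref{lem: Nazarov anti-concentration2}) to the $s$-dimensional marginal corresponding to the main components of $A_q\supset A$, where the Hilbert--Schmidt norm of the inverse covariance is controlled by $b$ and $s$ via (M.1$''$), giving $C\varepsilon$ for both the Gaussian and the conditional bootstrap law (the latter on the event $\Delta_{n,r}\leq C_1\overline{\Delta}_n\leq b/2$); (b) if every $A_q$ contains an $\varepsilon$-ball but $A$ does not, build the polytopes $A_q^m\subset A_q\subset A_q^{m,1/n}$, observe that $\cap_q A_q^{m,-\varepsilon}$ is empty, and apply the hyperrectangle anti-concentration (Lemma \ref{lem: anti-concentration}) across the $O((pn)^d)$ sparse facet directions of $A^0=\cap_q A_q^{m,1/n}\supset A$ to get $C(\log^{1/2}(pn))/n$. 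Without some version of this two-step argument your proof does not close; a secondary point is that the conditional anti-concentration statements for $S_n^{eX}$ in these degenerate cases themselves require the high-probability event $\Delta_{n,r}\leq C_1\overline{\Delta}_n$ to keep the relevant conditional variances bounded below, which your write-up should make explicit.
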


\subsection{Empirical bootstrap}

Here we consider the empirical bootstrap. For brevity, we only consider the case $\mA=\mA^{\re}$. Let $X_1^*,\dots,X_n^*$ be i.i.d. draws from the empirical distribution of $X_1,\dots,X_n$. Conditional on $X_{1}^{n} = \{ X_{1},\dots,X_{n} \}$, $X_1^*,\dots,X_n^*$ are i.i.d. with mean $\bar{X}=\En[X_i]$. Consider the normalized sum:
\[
S_n^{X^*}:=(S_{n1}^{X^*},\dots,S_{np}^{X^*})':=\frac{1}{\sqrt{n}}\sum_{i=1}^n (X_i^*-\bar{X}).
\]
We are interested in bounding
\[
\rho_n^{EB}(\mA):=\sup_{A\in\mA}|\Pr(S_n^{X^*} \in A \mid X_{1}^{n})-\Pr(S_n^Y\in A)|
\]
for $\mA=\mA^{\re}$.
To state the bound, define
\[
\hat{L}_n:=\max_{1\leq j\leq p}\sum_{i=1}^n |X_{ij}-\bar{X}_{j}|^3/n,
\]
which is an empirical analog of $L_n$, 
and for $\phi\geq 1$, define
\begin{align*}
&\hat{M}_{n,X}(\phi):=n^{-1}\sum_{i=1}^n\max_{1\leq j\leq p}|X_{i j}-\bar{X}_{j}|^31\left\{\max_{1\leq j\leq p}|X_{i j}-\bar{X}_{j}|>\sqrt{n}/(4\phi\log p)\right\},\\
&\hat{M}_{n,Y}(\phi):=\Ep\left[\max_{1\leq j\leq p}|S_{n j}^{e X}|^31\left\{\max_{1\leq j\leq p}|S_{n j}^{e X}|>\sqrt{n}/(4\phi\log p)\right\} \mid X_{1}^{n} \right],
\end{align*}
which are empirical analogs of $M_{n,X}(\phi)$ and $M_{n,Y}(\phi)$, respectively. Let 
\[
\hat{M}_n(\phi):=\hat{M}_{n,X}(\phi)+\hat{M}_{n,Y}(\phi).
\]
We have the following theorem.
\begin{theorem}[Abstract Empirical Bootstrap Theorem]\label{thm: empirical bootstrap CLT}
For arbitrary positive constants $b$, $\overline{L}_{n}$, and $\overline{M}_n$, the inequality 
\[
\rho_n^{EB}(\mA^{\re})\leq \rho_{n}^{MB}(\mA^{\re}) + K_1 \left[ \left(\frac{\overline{L}_{n}^2\log^7 p}{n}\right)^{1/6}+\frac{\overline{M}_n}{\overline{L}_{n}} \right]
\]
holds on the event 
\[
\{ \En[(X_{ij}-\bar{X}_{j})^2]\geq b \ \text{for all} \ j=1,\dots,p \} \cap \{  \hat{L}_{n} \leq \overline{L}_{n} \} \cap \{ \hat{M}_{n}(\phi_n) \leq \overline{M}_n \},
\]
where
\[
\phi_n:=K_2\left(\frac{\overline{L}_{n}^2\log^4 p}{n}\right)^{-1/6}.
\]
Here $K_1, K_2 > 0$ are constants that depend only on $b$.
\end{theorem}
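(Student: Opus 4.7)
The strategy is to combine a triangle inequality with a conditional application of Theorem~\ref{thm: main}. Conditionally on $X_{1}^{n}$, the bootstrap draws $X_{1}^{*},\dots,X_{n}^{*}$ are i.i.d.\ from the empirical distribution of $X_{1},\dots,X_{n}$, so the centered summands $X_{i}^{*}-\bar{X}$ are independent centered vectors with common conditional covariance $\hat{\Sigma}$. Moreover, $S_{n}^{eX}=n^{-1/2}\sum_{i}e_{i}(X_{i}-\bar{X})$ is, conditionally on $X_{1}^{n}$, a centered Gaussian vector with covariance exactly $\hat{\Sigma}$, so it coincides in conditional distribution with the normalized sum $n^{-1/2}\sum_{i}Y_{i}^{*}$ of i.i.d.\ $N(0,\hat{\Sigma})$ vectors $Y_{i}^{*}$, and plays the role of the Gaussian surrogate ``$S_{n}^{Y}$'' of Theorem~\ref{thm: main} when that theorem is applied to the conditional sum $S_{n}^{X^{*}}$. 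The triangle inequality then gives
\[
\rho_{n}^{EB}(\mA^{\re}) \leq \sup_{A\in\mA^{\re}}\bigl|\Pr(S_{n}^{X^{*}}\in A\mid X_{1}^{n}) - \Pr(S_{n}^{eX}\in A\mid X_{1}^{n})\bigr| + \rho_{n}^{MB}(\mA^{\re}),
\]
and it remains to control the first term by Theorem~\ref{thm: main} applied conditionally.

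I would match each hypothesis quantity of Theorem~\ref{thm: main} to its empirical analog. The second-moment lower bound $n^{-1}\sum_{i}\Ep[X_{ij}^{2}]\geq b$ becomes $\En[(X_{ij}-\bar{X}_{j})^{2}]\geq b$ (the first event in the statement); the quantity $L_{n}$ becomes $\En[|X_{ij}^{*}-\bar{X}_{j}|^{3}\mid X_{1}^{n}]=\hat{L}_{n}$, bounded by $\overline{L}_{n}$ on the second event; $M_{n,X}(\phi)$ becomes $\hat{M}_{n,X}(\phi)$ since the conditional law of $X_{i}^{*}-\bar{X}$ is the empirical distribution of $X_{i}-\bar{X}$; and $M_{n,Y}(\phi)$ becomes $\hat{M}_{n,Y}(\phi)$ because $Y_{1}^{*}$ and $S_{n}^{eX}$ share the conditional distribution $N(0,\hat{\Sigma})$. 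Consequently, the conditional $M_{n}(\phi_{n})$ equals $\hat{M}_{n}(\phi_{n})\leq\overline{M}_{n}$ on the third event, and Theorem~\ref{thm: main} yields
\[
\sup_{A\in\mA^{\re}}\bigl|\Pr(S_{n}^{X^{*}}\in A\mid X_{1}^{n}) - \Pr(S_{n}^{eX}\in A\mid X_{1}^{n})\bigr| \leq K_{1}\left[\left(\frac{\overline{L}_{n}^{2}\log^{7} p}{n}\right)^{1/6} + \frac{\overline{M}_{n}}{\overline{L}_{n}}\right]
\]
with the stated $\phi_{n}$, and combining with the previous display gives the claim.

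The only subtle point, and hence the main obstacle, is verifying that $\hat{M}_{n,Y}(\phi)$ as defined in the statement is indeed the correct conditional analog of $M_{n,Y}(\phi)$: $\hat{M}_{n,Y}(\phi)$ is written via the normalized bootstrap Gaussian sum $S_{n}^{eX}$, whereas $M_{n,Y}(\phi)$ in Theorem~\ref{thm: main} is written via the individual Gaussian summands $Y_{i}$. These match here because the Gaussian surrogates $Y_{i}^{*}$ are (conditionally) i.i.d.\ with the common covariance $\hat{\Sigma}$, so that $n^{-1/2}\sum_{i}Y_{i}^{*}$ and $Y_{1}^{*}$ share the law $N(0,\hat{\Sigma})$ conditionally on $X_{1}^{n}$, and the same then holds for $S_{n}^{eX}$; this identification is a feature of the bootstrap setup and would fail for arbitrary non-identically-distributed Gaussian summands.
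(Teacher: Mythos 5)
Your proposal is correct and follows essentially the same route as the paper's proof: a triangle inequality through $S_n^{eX}$, followed by an application of Theorem~\ref{thm: main} conditionally on $X_1^n$ with $L_n$ and $M_n(\phi_n)$ replaced by $\hat{L}_n$ and $\hat{M}_n(\phi_n)$. Your careful check that $\hat{M}_{n,Y}(\phi)$, defined via $S_n^{eX}$, is the correct conditional analog of $M_{n,Y}(\phi)$ (because the i.i.d.\ Gaussian surrogates $Y_i^*$ and the normalized sum $n^{-1/2}\sum_i Y_i^*$ all share the conditional law $N(0,\hat{\Sigma})$) is exactly the identification the paper relies on.
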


As for the multiplier bootstrap case, we next derive explicit bounds on $\rho_{n}^{EB}(\mA^{\re})$ under suitable moment conditions. 

\begin{proposition}[Empirical Bootstrap for Hyperrectangles]\label{cor: EB corollary}
Let $\alpha\in(0,e^{-1})$ be a constant, and suppose that conditions {\em (M.1)} and {\em (M.2)} are satisfied. In addition, suppose that $\log(1/\alpha)\leq K\log(p n)$ for some constant $K$. Then under {\em (E.1)}, we have with probability at least $1-\alpha$,
\begin{equation}\label{eq: EB example E1}
\rho_n^{EB}(\mA^{\re}) \leq CD_{n}^{(1)},
\end{equation}
where the constant $C$ depends only on $b$ and $K$;
while under {\em (E.2)}, we have with probability at least $1-\alpha$,
\begin{equation}\label{eq: EB example E2}
\rho_n^{EB}(\mA^{\re})\leq C \{ D_{n}^{(1)} + D_{n,q}^{(2)}(\alpha) \},
\end{equation}
where the constant $C$ depends only on $b,q$, and $K$.
\end{proposition}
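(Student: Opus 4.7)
The plan is to invoke the Abstract Empirical Bootstrap Theorem (Theorem \ref{thm: empirical bootstrap CLT}) and then estimate each of the quantities appearing in it. That theorem decomposes $\rho_n^{EB}(\mA^{\re})$ into a multiplier bootstrap piece $\rho_n^{MB}(\mA^{\re})$ plus a bias-type term governed by $\overline{L}_n$ and $\overline{M}_n$, valid on the event where $\En[(X_{ij}-\bar{X}_j)^2]\geq b'$ for some $b'>0$, $\hat{L}_n\leq \overline{L}_n$, and $\hat{M}_n(\phi_n)\leq \overline{M}_n$. The first piece is handled directly by Corollary \ref{cor: MB for rectangles explicit} applied at level $\alpha/2$. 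The hypothesis $\log(1/\alpha)\leq K\log(pn)$ reduces $D_n^{(1)}(\alpha)$ to a constant multiple of $D_n^{(1)}$, so the multiplier bootstrap bound becomes $CD_n^{(1)}$ under (E.1) and $C\{D_n^{(1)}+D_{n,q}^{(2)}(\alpha)\}$ under (E.2).

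Next, I would take $\overline{L}_n=CB_n$ with $C$ large enough so that $(\overline{L}_n^2\log^7 p/n)^{1/6}\leq C' D_n^{(1)}$, and verify the three events on a set of probability at least $1-\alpha/2$. For the variance floor, I write $\En[(X_{ij}-\bar{X}_j)^2]=\En[X_{ij}^2]-\bar{X}_j^2$ and apply a Bernstein inequality (under (E.1)) or a Fuk--Nagaev inequality (under (E.2)) combined with a union bound over $j=1,\dots,p$; (M.1) and (M.2) then give a lower bound of $b/2$. For $\hat{L}_n\leq CB_n$, I use the elementary bound $|X_{ij}-\bar{X}_j|^3\leq 4(|X_{ij}|^3+|\bar{X}_j|^3)$, concentrate $\En[|X_{ij}|^3]$ around its expectation (which is at most $B_n$ by (M.2)) uniformly in $j$, and control $|\bar{X}_j|$ by the same tail inequalities.

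The main obstacle is controlling $\hat{M}_n(\phi_n)$. With the above choice, $\phi_n$ is of order $(n/(B_n^2\log^4 p))^{1/6}$, so the truncation threshold $\tau_n=\sqrt{n}/(4\phi_n\log p)$ is of order $n^{1/3}B_n^{1/3}/\log^{1/3}(pn)$. Under (E.1), I would exploit the maximal inequality for Orlicz norms to obtain $\|\max_j|X_{ij}-\bar{X}_j|\|_{\psi_1}\lesssim B_n\log p$ and show, via concentration of empirical means of positive random variables, that $\hat{M}_{n,X}(\phi_n)\lesssim B_n\cdot D_n^{(1)}$ with high probability. Under (E.2), I would instead use $\mathbf{1}\{Z>\tau_n\}\leq (Z/\tau_n)^{q-3}$ applied to $Z=\max_j|X_{ij}-\bar{X}_j|$ to convert the polynomial moment hypothesis into a bound $\hat{M}_{n,X}(\phi_n)\lesssim B_n\cdot D_{n,q}^{(2)}(\alpha)$, with the $\alpha^{-2/q}$ factor entering through a Markov step on $\En[(\max_j|X_{ij}|)^q]$ (whose expectation is bounded by $2B_n^q$). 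The term $\hat{M}_{n,Y}(\phi_n)$ is treated analogously using the conditional Gaussianity of $S_n^{eX}$ given $X_1^n$: the Gaussian maximal inequality reduces the estimate to diagonal entries of $\hat{\Sigma}$, which are close to those of $\Sigma$ by the very same concentration inequalities used above. Plugging these choices back into Theorem \ref{thm: empirical bootstrap CLT} and combining with the multiplier bootstrap estimate delivers the advertised bound.
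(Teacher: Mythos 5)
Your overall architecture is the paper's: apply Theorem \ref{thm: empirical bootstrap CLT}, control $\rho_n^{MB}(\mA^{\re})$ by the multiplier bootstrap result at level comparable to $\alpha$ (using $\log(1/\alpha)\leq K\log(pn)$ to turn $D_n^{(1)}(\alpha)$ into $CD_n^{(1)}$), and then verify the three defining events of that theorem with probability at least $1-O(\alpha)$. The (E.1) case of your plan is essentially the paper's proof. One remark there: for $\hat M_n(\phi_n)$ you propose to show $\hat M_{n,X}(\phi_n)\lesssim B_nD_n^{(1)}$ by concentrating an empirical mean of truncated variables; the paper's route is simpler and worth adopting — since $\max_j|X_{ij}-\bar X_j|\leq 2\max_{i,j}|X_{ij}|$, one has $\Pr(\hat M_{n,X}(\phi_n)>0)\leq\Pr(\max_{i,j}|X_{ij}|>\sqrt n/(8\phi_n\log p))\leq\alpha/6$ under the standing reduction $B_n^2\log^7(pn)\leq cn$, so one may take $\overline M_n=0$; the same union-bound argument (with conditional sub-Gaussianity of $S_{nj}^{eX}$) handles $\hat M_{n,Y}(\phi_n)$.

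The genuine gap is in case (E.2): your choice $\overline L_n=CB_n$ is not attainable there. With only a $q$-th moment bound on $\max_j|X_{ij}|$, the deviation of $\max_j\En[|X_{ij}|^3]$ above its mean has only a polynomial tail (Lemma \ref{lem: deviation ineq nonnegative}(ii) with $s=q/3$), so forcing failure probability $\alpha/6$ requires allowing a deviation of order $n^{-1}(nB_n^q/\alpha)^{3/q}=B_n^3n^{-1+3/q}\alpha^{-3/q}$; even the expectation $\Ep[\max_j\En|X_{ij}|^3]\lesssim B_n+B_n^3n^{-1+3/q}\log p$ already exceeds $CB_n$ in general. The paper therefore takes $\overline L_n=C(B_n+B_n^3n^{-1+3/q}\alpha^{-3/q}\log p)$ and then checks, using the additional without-loss-of-generality reduction $B_n^2\log^3(pn)\leq c\,\alpha^{2/q}n^{1-2/q}$, that
\[
\left(\frac{\overline L_n^2\log^7(pn)}{n}\right)^{1/6}\leq C\left\{D_n^{(1)}+\left(\frac{B_n^2\log^3(pn)}{\alpha^{2/q}n^{1-2/q}}\right)^{1/2}\right\}\leq C\{D_n^{(1)}+D_{n,q}^{(2)}(\alpha)\},
\]
since the second term is dominated by its cube root when it is at most one. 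Your argument as written omits both the enlarged $\overline L_n$ and this absorption step, so the event $\{\hat L_n\leq\overline L_n\}$ you work on does not have the claimed probability under (E.2). The same enlargement of $\overline L_n$ (hence a smaller $\phi_n$, namely $\phi_n\leq cB_n^{-1}n^{1/2-1/q}\alpha^{1/q}(\log p)^{-1}$) is also what makes the Markov bound $\Pr(\max_{i,j}|X_{ij}|>\sqrt n/(8\phi_n\log p))\leq\alpha/6$ go through in that case.
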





\section{Key Lemma}
\label{sec: induction lemma}

In this section, we state a lemma that plays a key role in the proof of our high dimensional CLT for hyperrectangles (Theorem \ref{thm: main}). Define
\[
\varrho_{n}:=\sup_{y \in \R^p, v\in[0,1]} |\Pr\left(\sqrt{v}S^X_{n}+\sqrt{1-v}S^Y_{n} \leq y \right)-\Pr(S^Y_{n} \leq y)|,
\]
where the random vectors $Y_1,\dots,Y_n$ are assumed to be independent of the random vectors $X_1,\dots,X_n$,
and recall that $M_n(\phi):=M_{n,X}(\phi)+M_{n,Y}(\phi)$ for $\phi\geq 1$.
The lemma below provides a bound on $\varrho_{n}$.

\begin{lemma}[Key Lemma]\label{lem: induction lemma}
Suppose that there exists some constant $b>0$ such that $n^{-1}\sum_{i=1}^n\Ep[X_{i j}^2]\geq b$ for all $j=1,\dots,p$. Then $\varrho_{n}$ satisfies the following inequality for all $\phi\geq 1$:
\[
\varrho_{n}
\lesssim
\frac{\phi^2\log^2 p}{n^{1/2}}\left \{ \phi L_n \varrho_{n}+L_n\log^{1/2} p+\phi M_n(\phi)\right \} +\frac{\log^{1/2} p}{\phi}
\]
up to a constant $K$ that depends only on $b$.
\end{lemma}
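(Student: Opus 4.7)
The plan is to adapt a Bolthausen-style self-referential Stein argument to rectangle indicators in high dimensions. Starting from the observation $1\{w \leq y\} = 1\{\max_j(w_j - y_j) \leq 0\}$, I would approximate this indicator by a smooth functional $m(w) = g_0(F_\beta(w - y))$, where $F_\beta(z) := \beta^{-1}\log\sum_{j=1}^p e^{\beta z_j}$ is the log-sum-exp smooth maximum and $g_0:\R\to[0,1]$ is a $C^3$ non-increasing function transitioning from $1$ to $0$ on an interval of length $\beta^{-1}$ around $0$, with $\beta \asymp \phi\log p$. Since $|F_\beta(z) - \max_j z_j| \leq \beta^{-1}\log p$, sandwiching $1\{w \leq y\}$ between smoothed indicators of the shifted rectangles $\{w \leq y \pm \delta\}$ with $\delta \asymp \beta^{-1}\log p$ bounds the smoothing discrepancy by $\Pr(\max_j(S^Y_{nj} - y_j) \in [-\delta, \delta])$; Nazarov's anti-concentration inequality, applicable thanks to (M.1), bounds this by $\delta\log^{1/2}p = \log^{1/2}p/\phi$, which accounts for the last term of the claim (and crucially uses only the Gaussian side, bypassing the need to anti-concentrate $\sqrt{v}S^X_n + \sqrt{1-v}S^Y_n$ directly).

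Next, for the smoothed functional $m$, I would write $\Ep[m(W_v)] - \Ep[m(S^Y_n)] = \int_0^v \frac{d}{dt}\Ep[m(W_t)]\,dt$ with $W_t := \sqrt{t}\,S^X_n + \sqrt{1-t}\,S^Y_n$, and compute the derivative summand-by-summand, using Taylor expansion to third order for the $X$-piece and Gaussian integration-by-parts for the $Y$-piece. The first- and second-order terms cancel because $X_i, Y_i$ are centered and share second moments, leaving a remainder controlled by third derivatives of $m$ against cubic products of $X_{ij}$ and $Y_{ij}$. The core derivative estimate $\sum_{j,k,l}|\partial^3_{jkl}F_\beta| \lesssim \beta^2$ (and the analogous bound for $m$) together with the Lindeberg scaling yields a contribution of order $\beta^2/\sqrt{n} = \phi^2\log^2 p/\sqrt{n}$ multiplied by the cubic moments. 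Truncating $X_i, Y_i$ at the threshold $\sqrt{n}/(4\phi\log p)$ splits these moments into a tail, whose contribution equals $\phi^3 M_n(\phi)\log^2 p/\sqrt{n}$ up to constants, and a bounded part contributing $L_n$; the bounded part is further split into a direct estimate giving the $L_n\log^{1/2}p$ contribution (via anti-concentration of $S^Y_n$ in the support of $\partial^3 m$) and a self-referential estimate following Bolthausen \cite{Bolthausen84}, which bounds the conditional expectation $\Ep[\partial^3_{jkl}m(W_t^{(i)}) \mid X_i]$ by comparing the leave-one-out sum $W_t^{(i)}$ to its Gaussian analogue, with error controlled precisely by $\varrho_n$ after absorbing a shift of order $1/(\phi\log p)$ by anti-concentration — yielding the $\phi L_n \varrho_n$ term.

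The main obstacle is executing this Bolthausen-style comparison in the rectangle setting. The conditional third-derivative functional must be re-expressed as a smooth average of rectangle-indicator type functionals of the leave-one-out sum whose Gaussian-versus-non-Gaussian discrepancy is governed by $\varrho_n$; this forces the coupled choice of truncation scale $\sqrt{n}/(4\phi\log p)$ (so that the per-summand shift $X_i/\sqrt{n}$ is no larger than the smoothing width $\beta^{-1} = 1/(\phi\log p)$) and smoothing parameter $\beta \asymp \phi\log p$. Once this reduction is in place, the three contributions in braces — $L_n\log^{1/2}p$ from direct anti-concentration, $\phi M_n(\phi)$ from truncated tails, and $\phi L_n \varrho_n$ from Bolthausen's step — combine with the prefactor $\phi^2\log^2 p/\sqrt{n}$ and the smoothing error $\log^{1/2}p/\phi$ to produce the stated inequality.
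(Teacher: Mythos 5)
Your proposal follows essentially the same route as the paper's proof: the same log-sum-exp smoothing with $\beta\asymp\phi\log p$, the same truncation at $\sqrt{n}/(4\phi\log p)$ deliberately coupled to the smoothing width $\beta^{-1}$, the same Bolthausen-style self-referential bound on the third-order Lindeberg remainder via $\varrho_n$ plus Nazarov's anti-concentration inequality, and the same final sandwiching step yielding the $\phi^{-1}\log^{1/2}p$ term. The only cosmetic difference is your interpolation path (directly from $S_n^Y$ to $\sqrt{v}S_n^X+\sqrt{1-v}S_n^Y$, with Stein/Gaussian integration by parts on the $Y$-piece, rather than the paper's three-block Slepian interpolant through an independent Gaussian copy $S_n^W$), which works equally well since the first- and second-order terms still cancel by centering and matching covariances.
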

Lemma \ref{lem: induction lemma} has an immediate corollary. Indeed, define
\[
\varrho'_{n}:=\sup_{A \in \mA^{\re}, v\in[0,1]} |\Pr(\sqrt{v}S^X_{n}+\sqrt{1-v}S^Y_{n}\in A)-\Pr(S^Y_{n}\in A)|
\]
where $\mA^{\re}$ is the class of all hyperrectangles in $\R^p$.
Then we have:
\begin{corollary}\label{cor: induction lemma}
Suppose that there exists some constant $b>0$ such that $n^{-1}\sum_{i=1}^n\Ep[X_{i j}^2]\geq b$ for all $j=1,\dots,p$. Then $\varrho'_{n}$ satisfies the following inequality for all $\phi\geq 1$:
\[
\varrho'_{n}
\lesssim
\frac{\phi^2\log^2 p}{n^{1/2}}\left\{ \phi L_n \varrho'_{n}+L_n \log^{1/2} p + \phi M_n(2\phi)\right \}+\frac{\log^{1/2} p}{\phi}
\]
up to a constant $K'$ that depends only on $b$.
\end{corollary}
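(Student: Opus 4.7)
The plan is to reduce the corollary directly to Lemma \ref{lem: induction lemma} via a doubling construction, which converts the class of hyperrectangles in $\R^p$ into the class of orthants in $\R^{2p}$. For each $i=1,\dots,n$, I would introduce the $\R^{2p}$-valued vectors
\[
\tilde X_i := (X_i',-X_i')', \qquad \tilde Y_i := (Y_i',-Y_i')',
\]
and let $\tilde S^X_n,\tilde S^Y_n$ be their normalized sums. The key observation is that any hyperrectangle $A=\{w:a_j\le w_j\le b_j\}\in\mA^{\re}$ can be rewritten as $\{w: w_j\le b_j \text{ and } -w_j\le -a_j \text{ for all } j\}$, so that for every $v\in[0,1]$,
\[
\{\sqrt{v}\,S^X_n+\sqrt{1-v}\,S^Y_n\in A\} = \{\sqrt{v}\,\tilde S^X_n+\sqrt{1-v}\,\tilde S^Y_n\le \tilde y\},
\]
where $\tilde y=(b_1,\dots,b_p,-a_1,\dots,-a_p)'\in\R^{2p}$. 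Taking the supremum over $A\in\mA^{\re}$ and $v\in[0,1]$ on the left and over $\tilde y\in\R^{2p}$ and $v\in[0,1]$ on the right shows that $\varrho'_n$ coincides with the quantity $\varrho_n$ from Lemma \ref{lem: induction lemma} applied to the doubled sequences $\tilde X_1,\dots,\tilde X_n$ and $\tilde Y_1,\dots,\tilde Y_n$ in dimension $2p$.

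Next I would check that the hypotheses and parameters of Lemma \ref{lem: induction lemma} transfer cleanly to the doubled problem. Since each coordinate of $\tilde X_i$ equals $\pm X_{ij}$ for some $j$, the variance bound $n^{-1}\sum_{i=1}^n \Ep[\tilde X_{ij}^2]\ge b$ holds for all $j=1,\dots,2p$ with the same $b$, and $L_n$ for $\tilde X$ is unchanged. The covariance identity $\Ep[\tilde X_i\tilde X_i']=\Ep[\tilde Y_i\tilde Y_i']$ is preserved (as a straightforward block computation shows), $\tilde Y_i$ is centered Gaussian as a linear image of $Y_i$, and $\tilde Y_i$ is independent of $\tilde X_i$. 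The only substantive change occurs in the envelope quantity. Observing that $\max_{1\le j\le 2p}|\tilde X_{ij}|=\max_{1\le j\le p}|X_{ij}|$, and using that $p\ge 3$ forces $\log(2p)\le 2\log p$, the truncation level defining $M_n$ for $\tilde X$ at parameter $\phi$ satisfies
\[
\frac{\sqrt n}{4\phi\log(2p)} \ \ge\ \frac{\sqrt n}{8\phi\log p} \ =\ \frac{\sqrt n}{4(2\phi)\log p},
\]
so the indicator fires on a smaller set than the one used in $M_{n,X}(2\phi)$, yielding $M_{n,\tilde X}(\phi)\le M_{n,X}(2\phi)$. The same bound holds with $Y$ in place of $X$, so the envelope term for the doubled problem at parameter $\phi$ is dominated by $M_n(2\phi)$ for the original problem.

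Finally I would apply Lemma \ref{lem: induction lemma} to the doubled sequences. This yields an inequality with $\log(2p)$ in place of $\log p$ everywhere, and with $M_n$ for $\tilde X,\tilde Y$ in place of $M_n$. Using $\log(2p)\le 2\log p$ to absorb the resulting constants into the implicit constant (which then still depends only on $b$), together with the envelope bound from the previous paragraph, gives
\[
\varrho'_n \ \lesssim\ \frac{\phi^2\log^2 p}{n^{1/2}}\bigl\{\phi L_n\varrho'_n + L_n\log^{1/2} p + \phi M_n(2\phi)\bigr\} + \frac{\log^{1/2} p}{\phi},
\]
which is the claim with the stated $K'$. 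The whole argument is essentially bookkeeping once the doubling trick is in place; the only mildly delicate point is the shift $\phi\mapsto 2\phi$ inside $M_n$, which is forced by $\log(2p)$ replacing $\log p$ in the envelope definition and cannot be absorbed into a multiplicative constant without coarsening the threshold.
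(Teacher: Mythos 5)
Your proposal is correct and is exactly the paper's argument: the paper proves the corollary by the same doubling construction $\tilde X_i=(X_i',-X_i')'$, $\tilde Y_i=(Y_i',-Y_i')'$ and an application of Lemma \ref{lem: induction lemma} in dimension $2p$. In fact your write-up is more explicit than the paper's (which omits the bookkeeping entirely), and your explanation of why the envelope term becomes $M_n(2\phi)$ — the threshold $\sqrt{n}/(4\phi\log(2p))$ dominates $\sqrt{n}/(4(2\phi)\log p)$ since $\log(2p)\le 2\log p$ — is the right and only delicate point.
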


\appendix

\section{Anti-concentration inequalities}

One of the main ingredients of the proof of Lemma \ref{lem: induction lemma} (and the proofs of the other results indeed) is the following anti-concentration inequality due to Nazarov \cite{N03}. 

\begin{lemma}[Nazarov's inequality, \cite{N03}]
\label{lem: anti-concentration}
Let $Y=(Y_1,\dots,Y_p)'$ be a centered Gaussian random vector in $\R^p$ such that $\Ep[Y_j^2]\geq b$ for all $j=1,\dots,p$ and some constant $b>0$. 
Then for every $y \in \R^{p}$ and $a>0$,
\[
\Pr(Y \leq y+a)-\Pr(Y \leq y)\leq Ca\sqrt{\log p},
\]
where $C$ is a constant depending only on $b$.
\end{lemma}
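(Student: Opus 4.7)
The plan is to reduce the inequality to an anti-concentration estimate for the maximum $M := \max_{1 \leq j \leq p}(Y_j - y_j)$: since $\Pr(Y \leq y + a) - \Pr(Y \leq y) = \Pr(0 < M \leq a)$, it suffices to establish $\Pr(0 < M \leq a) \leq Ca\sqrt{\log p}$, with $C$ depending only on $b$.

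To gain access to smooth calculus, I would first replace $M$ by the soft-maximum $F_\beta(z) := \beta^{-1} \log \sum_{j=1}^p e^{\beta z_j}$, which satisfies $\max_j z_j \leq F_\beta(z) \leq \max_j z_j + \beta^{-1}\log p$. This yields the deterministic inclusion $\{0 < M \leq a\} \subseteq \{F_\beta(Y - y) \in [-\beta^{-1}\log p,\, a + \beta^{-1}\log p]\}$, reducing the task to bounding the Lebesgue density $g_\beta$ of the smooth scalar $F_\beta(Y - y)$. The central quantitative step is the density estimate $\|g_\beta\|_\infty \leq C\sqrt{\log p}$ (with $C$ depending only on $b$), which scales polylogarithmically rather than polynomially in $p$, and is valid for $\beta$ of order at least $(\log p)/a$. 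I would derive this via Gaussian integration by parts (Stein's identity $\Ep[Y_j h(Y)] = \sum_k \Sigma_{jk}\Ep[\partial_k h(Y)]$) applied to smooth approximations of indicators, using the explicit form $\partial_j F_\beta(z) = \pi_j(z) := e^{\beta z_j}/\sum_k e^{\beta z_k}$. The density then admits a representation whose magnitude is controlled by the quadratic form $\pi(Y-y)^\top \Sigma\, \pi(Y-y)$, which is bounded below by $b$ using the variance lower bound $\Sigma_{jj} \geq b$ together with the fact that $\pi$ lies in the probability simplex and concentrates (for large $\beta$) near a standard basis vector corresponding to the argmax of $(Y_j - y_j)_j$. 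Finally, choosing $\beta \asymp (\log p)/a$ balances the smoothing error against the window length, and combining with the density bound gives
\[
\Pr(0 < M \leq a) \leq (a + 2\beta^{-1}\log p) \cdot \|g_\beta\|_\infty \leq C a\sqrt{\log p}.
\]

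The main obstacle is precisely the density bound $\|g_\beta\|_\infty \leq C\sqrt{\log p}$. The crude route through the Lipschitz constant of $F_\beta$ (at most $1$) combined with the variance lower bound only delivers a density bound of order $\sqrt{p}/\sqrt{b}$, which is too weak by a factor of $\sqrt{p/\log p}$. Obtaining the logarithmic scaling requires carefully exploiting the saddle-point concentration of $\pi(Y-y)$ near a vertex of the simplex---i.e., that under the Gaussian measure the soft-max effectively selects a single dominant coordinate---so that the Stein correction only contributes the maximum of roughly $p$ Gaussian scores (with $\sqrt{\log p}$ tails) rather than an accumulated sum of $p$ independent contributions. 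This saddle-point reduction is the quantitative heart of Nazarov's argument.
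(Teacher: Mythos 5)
Your reduction to $\Pr(0<M\leq a)$ for $M=\max_{1\leq j\leq p}(Y_j-y_j)$ and the smoothing bookkeeping with $F_\beta$ are fine, but the argument has a genuine gap at exactly the step you flag as ``the quantitative heart'': the density bound $\|g_\beta\|_\infty\leq C\sqrt{\log p}$ is asserted, not proved. The Stein/integration-by-parts representation you describe places the quadratic form $\pi^\top\Sigma\,\pi$ (or $\nabla F_\beta^\top\Sigma\nabla F_\beta$) in a denominator, and the lower bound $\Sigma_{jj}\geq b$ controls this only at the \emph{vertices} of the simplex. It can degenerate in the interior: take $\Sigma_{jj}=b$ and $\Sigma_{jk}=-b/(p-1)$ for $j\neq k$, which is positive semidefinite; then $\pi^\top\Sigma\pi=0$ at the barycenter $\pi=(1/p,\dots,1/p)$. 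The soft-max weights sit near the barycenter precisely on the event that several coordinates of $Y-y$ are within $O(1/\beta)$ of one another, i.e.\ on the near-tie event that is the whole difficulty of anti-concentration, and for the covariance above that event has non-negligible probability. So ``$\pi$ concentrates near a vertex'' is not available uniformly over the covariances allowed by the hypothesis, and the sketch gives no mechanism for handling the degenerate configurations. Note also that the target bound $Ca\sqrt{\log p}$, uniform in $a$, is strictly stronger than what the more routine Stein-type density arguments (e.g.\ those in \cite{CCK12c}) deliver, which carry an extra $\sqrt{1\vee\log(1/a)}$ term; closing your gap would essentially require reproving Nazarov's theorem.

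For comparison, the paper does not derive the inequality from first principles. It writes $Y\stackrel{d}{=}\Sigma^{1/2}Z$ with $Z$ standard Gaussian, normalizes the rows $\sigma_j$ of $\Sigma^{1/2}$ to unit vectors (so the event $\{Y\leq y+a\}$ becomes an intersection of $p$ half-spaces for $Z$, each shifted by at most $a/\|\sigma_j\|\leq a/\sqrt{b}$), and then invokes the Gaussian perimeter bound for intersections of $p$ half-spaces, Theorem 20 in \cite{KDS08}, whose proof is credited to Nazarov \cite{N03}. If you want a complete and short write-up, that reduction plus citation is the intended route; a self-contained proof along your lines would need the full saddle-point analysis you have only outlined.
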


\begin{remark}
This inequality is less sharp than the dimension-free anti-concentration bound $C a \Ep [\max_{1 \leq j \leq p} Y_{j}]$ proved in \cite{CCK12c} for the case of max hyperrectangles.  However, the former inequality allows  for more general hyperrectangles  than the latter.  The difference in sharpness for the case of max-hyperrectangles arises due to dimension-dependence $\sqrt{\log p}$,  in particular the term $\sqrt{\log p}$ can be much larger than $\Ep [\max_{1 \leq j \leq p}Y_{j}]$. This also makes the anti-concentration bound in \cite{CCK12c}  more relevant for the study of suprema of Gaussian processes indexed by infinite classes.  It is an interesting question whether one could establish a dimension-free anti-concentration bound similar to that  in \cite{CCK12c} for classes of hyperrectangles other than max hyperrectangles. \qed
\end{remark}

\begin{proof}[Proof of Lemma \ref{lem: anti-concentration}]
Let $\Sigma=\Ep[YY']$; then $Y$ has the same distribution as $\Sigma^{1/2}Z$ where $Z$ is a standard Gaussian random vector.
Write $\Sigma^{1/2}=(\sigma_1,\dots,\sigma_p)'$ where each $\sigma_{j}$ is a $p$-dimensional vector. Note that $\| \sigma_{j} \| = (\Ep[Y_{j}^{2}])^{1/2} \geq b^{1/2}$. Then 
\begin{align*}
&\Pr(Y\leq y+a)=\Pr(\Sigma^{1/2}Z \leq y+a) \\
&=\Pr((\sigma_j/\| \sigma_j \|)'Z\leq (y_j+a)/\| \sigma_j \| \ \text{for all} \ j=1,\dots,p),
\end{align*}
and similarly
\[
\Pr(Y \leq y)=\Pr((\sigma_j/\| \sigma_j \|)'Z\leq y_j/\| \sigma_j \| \ \text{for all} \ j=1,\dots,p).
\]
Since $Z$ is a standard Gaussian random vector, and $a/\|\sigma_j\|\leq a/b^{1/2}$ for all $j=1,\dots,p$, the assertion follows from Theorem 20 in \cite{KDS08}, whose proof the authors credit to Nazarov \cite{N03}.
\end{proof}

We will use another anti-concentration inequality by \cite{N03} in the proofs for Sections \ref{sec: friendly sets} and \ref{sec: bootstrap CLT}, which is an extension of Theorem 4 in \cite{B93}.

\begin{lemma}
\label{lem: Nazarov anti-concentration2}
Let $A$ be a $p \times p$ symmetric positive definite matrix, and let $\gamma_{A} = N(0,A^{-1})$. Then there exists a universal constant $C > 0$ such that for every convex set $Q\subset \R^{p}$, and every $h_1,h_2>0$,
\[
\frac{\gamma_{A}(Q^{h_1} \setminus Q^{-h_2})}{h_1+h_2} \leq C \sqrt{\| A \|_{HS}},
\]
where $\| A \|_{HS}$ is the Hilbert-Schmidt norm of $A$, $Q^{h} = \{x\in \mathbb{R}^p: \rho(x,Q)\leq h\}$, $Q^{-h}=\{x\in\mathbb{R}^p: B(x,h)\subset Q\}$, $B(x,h)=\{y\in\mathbb{R}^p: \|y-x\|\leq h\}$, and $\rho(x,Q)=\inf_{y\in Q}\|y-x\|$.
\end{lemma}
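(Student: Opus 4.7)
The plan is to recognize the lemma as a direct corollary of Nazarov's \cite{N03} extension of Ball's theorem on the maximal Gaussian perimeter of a convex body. My approach is first to re-express the shell measure $\gamma_A(Q^{h_1}\setminus Q^{-h_2})$ as an integral of Gaussian surface areas via the coarea formula, and then to invoke Nazarov's uniform surface-area bound for convex bodies.

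Let $f$ be the signed distance to $\partial Q$, negative on the interior of $Q$ and positive on the complement, so that $Q^{h_1}\setminus Q^{-h_2}=\{-h_2 < f \leq h_1\}$. Since $Q$ is convex, every level set $Q^t:=\{f\leq t\}$ is convex: for $t\geq 0$ by definition, and for $t<0$ as the $|t|$-erosion of $Q$ (possibly empty). The function $f$ is $1$-Lipschitz with $|\nabla f|=1$ almost everywhere, so after approximating $Q$ by convex bodies with smooth strictly convex boundary to legitimize the step, the coarea formula yields
\[
\gamma_A(Q^{h_1}\setminus Q^{-h_2})=\int_{-h_2}^{h_1}\gamma_A^+(\partial Q^t)\,dt,
\]
where $\gamma_A^+(\partial K):=\int_{\partial K}\phi_A\,d\mathcal{H}^{p-1}$ is the Gaussian surface area of the set $K$ under $\gamma_A$, with $\phi_A$ the density. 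Dividing by $h_1+h_2$, the lemma reduces to the uniform surface-area bound
\[
\sup\{\gamma_A^+(\partial K):K\subset\R^p\text{ convex body}\}\leq C\sqrt{\|A\|_{HS}}.
\]

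The main obstacle is this uniform surface-area bound, which is the substantive content of \cite{N03} and generalizes Ball's $Cp^{1/4}$ bound for the standard Gaussian ($A=I_p$). Nazarov's strategy is to reduce to $K$ with smooth strictly convex boundary; apply the divergence theorem with the vector field $x\mapsto Ax$, using $\nabla\phi_A=-Ax\,\phi_A$ to turn the boundary integral into a volume integral; split $\partial K$ according to the sign of $\langle Ax,n(x)\rangle$, where convexity of $K$ imposes a controlled sign structure on $\partial K$; and finally bound the resulting volume integral by Cauchy--Schwarz together with Gaussian moment computations under $\gamma_A$, whose variance calculations combined with a sharpening step provide the factor $\sqrt{\|A\|_{HS}}$. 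Since this inequality is established in Nazarov's paper, I would cite it directly rather than reprove it; the contribution of our proof is then just the coarea reduction in the first step.
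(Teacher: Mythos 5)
Your proposal is correct and rests on exactly the same external input as the paper's proof, namely Nazarov's bound \cite{N03} on the Gaussian perimeter of a convex set under a general Gaussian measure; the only difference is in the glue. The paper quotes Nazarov's result in the integrated one-sided form $\gamma_A(Q^h\setminus Q)/h\leq C\sqrt{\|A\|_{HS}}$ and then refers the passage to the two-sided shell $Q^{h_1}\setminus Q^{-h_2}$ to the arguments in Proposition 2.5 of \cite{CF11} or Section 1.3 of \cite{BR86}, whereas you quote the surface-area form and obtain the two-sided bound directly by the coarea formula applied to the signed distance function. Your reduction is sound: the sublevel sets $\{f\leq t\}$ are convex for every $t$ (dilations for $t\geq 0$, erosions for $t<0$), $|\nabla f|=1$ a.e.\ off $\partial Q$ by the eikonal property of distance functions, and the degenerate cases (empty erosion, $Q$ with empty interior, the single level $t=\min f$) contribute nothing to the integral; the approximation by smooth strictly convex bodies that you flag is the standard way to make the identification of $\int_{\{f=t\}}\phi_A\,d\mathcal{H}^{p-1}$ with the Gaussian perimeter of $\{f\leq t\}$ rigorous. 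What your route buys is a self-contained derivation of the two-sided statement from the perimeter bound, in place of the paper's second citation; what it costs is precisely the measure-theoretic care in the coarea step, which the cited references package for you.
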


\begin{proof}
It is proven in \cite{N03} that for every convex set $Q\subset \mathbb{R}^p$ and every $h>0$,
\[
\frac{\gamma_{A}(Q^{h} \setminus Q)}{h} \leq C \sqrt{\| A \|_{HS}}.
\]
Therefore, the asserted claim follows from the arguments in Proposition 2.5 of \cite{CF11} or in Section 1.3 of \cite{BR86}.
\end{proof}

\section{Proof for Section \ref{sec: induction lemma}}
\label{sec: induction lemma proof}

We begin with stating the following variants of Chebyshev's association inequality.
\begin{lemma}
\label{lem: correlations}
Let $\varphi_{i}: \R \to [0,\infty), \ i=1,2$ be non-decreasing functions, and let $\xi_{i}, i=1,2$ be independent real-valued random variables. Then
\begin{align}
&\Ep[\varphi_{1}(\xi_{1})]\Ep[\varphi_{2}(\xi_{1})] \leq \Ep[\varphi_{1}(\xi_{1})\varphi_{2}(\xi_{1})],\label{eq: simp1}\\
&\Ep[\varphi_{1}(\xi_{1})]\Ep[\varphi_{2}(\xi_2)] \leq \Ep[\varphi_{1} (\xi_1)\varphi_{2}(\xi_1)]+\Ep[\varphi_{1}(\xi_2)\varphi_{2}(\xi_2)],\label{eq: simp2}\\
&\Ep[\varphi_1(\xi_1)\varphi_{2}(\xi_2)] \leq \Ep[\varphi_{1}(\xi_1)\varphi_{2}(\xi_1)]+\Ep[\varphi_{1}(\xi_2)\varphi_{2}(\xi_2)], \label{eq: simp3}
\end{align}
where we assume that all the expectations exist and are finite. Moreover, (\ref{eq: simp3}) holds without independence of $\xi_1$ and $\xi_2$. 
\end{lemma}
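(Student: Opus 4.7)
The three inequalities are all classical in spirit, and I would prove them in a slightly different order than stated, starting from the easiest and strongest pointwise fact.

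The plan is to begin with (\ref{eq: simp3}), since it is the only statement that requires no independence. The key observation is the pointwise inequality
\[
\varphi_{1}(a)\varphi_{2}(b) \leq \varphi_{1}(a)\varphi_{2}(a) + \varphi_{1}(b)\varphi_{2}(b) \quad \text{for all } a,b \in \R,
\]
which I would verify by case analysis: if $a \leq b$, then since $\varphi_{1}$ is non-decreasing and $\varphi_{2} \geq 0$, one has $\varphi_{1}(a)\varphi_{2}(b) \leq \varphi_{1}(b)\varphi_{2}(b)$, and the neglected term $\varphi_{1}(a)\varphi_{2}(a)$ is non-negative; the case $a \geq b$ is symmetric. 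Applying this pointwise bound with $a = \xi_{1}(\omega)$ and $b = \xi_{2}(\omega)$ and taking expectations yields (\ref{eq: simp3}) with no hypothesis of independence.

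Next, (\ref{eq: simp2}) will follow as an immediate corollary under independence: by independence, $\Ep[\varphi_{1}(\xi_{1})]\Ep[\varphi_{2}(\xi_{2})] = \Ep[\varphi_{1}(\xi_{1})\varphi_{2}(\xi_{2})]$, and then (\ref{eq: simp3}) applied to the right-hand side finishes the job.

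Finally, for (\ref{eq: simp1}) I would use the classical Chebyshev association trick. Let $\xi_{1}'$ be an independent copy of $\xi_{1}$. Since $\varphi_{1}$ and $\varphi_{2}$ are both non-decreasing, the quantities $\varphi_{1}(\xi_{1}) - \varphi_{1}(\xi_{1}')$ and $\varphi_{2}(\xi_{1}) - \varphi_{2}(\xi_{1}')$ always share the same sign, so
\[
\bigl(\varphi_{1}(\xi_{1}) - \varphi_{1}(\xi_{1}')\bigr)\bigl(\varphi_{2}(\xi_{1}) - \varphi_{2}(\xi_{1}')\bigr) \geq 0
\]
pointwise. Taking expectations and using that $\xi_{1}'$ is an independent copy of $\xi_{1}$, the cross terms collapse to $-2\Ep[\varphi_{1}(\xi_{1})]\Ep[\varphi_{2}(\xi_{1})]$ while the diagonal terms combine to $2\Ep[\varphi_{1}(\xi_{1})\varphi_{2}(\xi_{1})]$, giving (\ref{eq: simp1}). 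There is no real obstacle here; the only point worth noting is that one should not try to deduce (\ref{eq: simp1}) from (\ref{eq: simp2}) by taking $\xi_{2}$ an independent copy of $\xi_{1}$, since that route produces an inessential factor of two on the right-hand side — the sharper Chebyshev argument via the non-negative product is required.
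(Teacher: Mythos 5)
Your proposal is correct and follows essentially the same route as the paper: (\ref{eq: simp3}) via a pointwise monotonicity argument, (\ref{eq: simp2}) as a consequence of (\ref{eq: simp3}) under independence, and (\ref{eq: simp1}) by Chebyshev's association inequality (which the paper cites rather than reproves, while you give the standard symmetrization proof). Your direct case analysis for the pointwise bound in (\ref{eq: simp3}) is a minor, equally valid variant of the paper's step of adding the non-negative symmetric term and expanding the product $(\varphi_{1}(\xi_1)-\varphi_{1}(\xi_2))(\varphi_{2}(\xi_1)-\varphi_{2}(\xi_2))\geq 0$.
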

\begin{proof}[Proof of Lemma \ref{lem: correlations}]
The inequality (\ref{eq: simp1}) is Chebyshev's association inequality; see Theorem 2.14 in \cite{BLM2013}. Moreover, since $\xi_1$ and $\xi_2$ are independent, (\ref{eq: simp2}) follows from (\ref{eq: simp3}). In turn, (\ref{eq: simp3}) follows from
\begin{align*}
&\Ep[\varphi_{1}(\xi_1)\varphi_{2}(\xi_2)] \leq \Ep[\varphi_{1}(\xi_1)\varphi_{2}(\xi_2)]+\Ep[\varphi_{2}(\xi_1)\varphi_{1}(\xi_2)] \\
&\quad \leq \Ep[\varphi_{1}(\xi_1)\varphi_{2}(\xi_1)]+\Ep[\varphi_{1}(\xi_2)\varphi_{2}(\xi_2)],
\end{align*}
where the first inequality follows from the fact that $\varphi_{2}(\xi_1)\varphi_{1}(\xi_2)\geq 0$, and the second inequality follows from rearranging the terms in the following inequality:
\[
\Ep[(\varphi_{1}(\xi_1)-\varphi_{1}(\xi_2))(\varphi_{2}(\xi_1)-\varphi_{2}(\xi_2))]\geq 0,
\]
which follows from monotonicity of $\varphi_{1}$ and $\varphi_{2}$. 
\end{proof}

\subsection*{Proof of Lemma \ref{lem: induction lemma}}
The proof relies on a Slepian-Stein method developed in \cite{CCK12a}. 
Here the notation $\lesssim$ means that the left-hand side is bounded by the right-hand side up to some constant depending only on $b$. 

We begin with preparing some notation. Let $W_1,\dots,W_n$ be a copy of $Y_1,\dots,Y_n$. Without loss of generality, we may assume that $X_1,\dots,X_n$, $Y_1,\dots,Y_n$, and $W_1,\dots,W_n$ are independent. Consider $S^W_{n}:=n^{-1/2}\sum_{i=1}^n W_i$.
Then $\Pr(S^Y_{n} \leq y)=\Pr(S^W_{n} \leq y)$, so that
\[
\varrho_{n}=\sup_{y\in\R^p,v\in[0,1]}|\Pr \left(\sqrt{v}S^X_{n}+\sqrt{1-v}S^Y_{n} \leq y \right)-\Pr(S^W_{n} \leq y)|.
\]
Pick any $y \in \R^p$ and $v \in [0,1]$.
Let $\beta:=\phi\log p$, 
and define the function
\[
F_\beta(w):=\beta^{-1} \log\left( {\textstyle \sum}_{j=1}^p\exp \left( \beta (w_{j}-y_{j}) \right) \right), \ w \in \R^{p}.
\]
The function $F_\beta(w)$ has the following property:
\begin{equation}
\label{eq: F properties}
0 \leq F_\beta(w)-\max_{1\leq j\leq p}(w_j-y_j)\leq \beta^{-1}\log p=\phi^{-1}, \ \text{for all} \ w \in \R^{p}.
\end{equation}
Pick a thrice continuously differentiable function $g_0:\R \to [0,1]$ whose derivatives up to the third order are all bounded such that $g_{0}(t)=1$ for $t \leq 0$ and $g_{0}(t)=0$ for $t\geq 1$. Define $g(t):=g_{0}(\phi t), t \in \R$, and 
\[
m(w):=g(F_\beta(w)), \ w \in \R^{p}.
\]
For brevity of notation, we will use indices to denote partial derivatives of $m$; for example, $\partial_j\partial_k\partial_l m=m_{j k l}$. The function $m(w)$ has the following properties established in Lemmas A.5 and A.6 of \cite{CCK12a}: for every $j,k,l=1,\dots,p$, there exists a function $U_{j k l}(w)$ such that
\begin{align}
&|m_{jkl}(w)|\leq U_{jkl}(w), \label{eq: property1}\\
&\textstyle{\sum_{j,k,l=1}^p} U_{jkl}(w) \lesssim (\phi^3+\phi\beta+\phi\beta^2)\lesssim \phi\beta^2, \label{eq: property2}\\
&U_{jkl}(w)\lesssim U_{jkl}(w+\tilde{w}) \lesssim U_{jkl}(w), \label{eq: property3}
\end{align}
where the inequalities (\ref{eq: property1}) and (\ref{eq: property2}) hold for all $w\in\R^p$, and the inequality (\ref{eq: property3}) holds for all $w,\tilde{w}\in\R^p$ with $\max_{1\leq j\leq p}|\tilde{w}_j|\beta\leq 1$ (formally, \cite{CCK12a} only considered the case where $y=(0,\dots,0)'$ but the extension to $y \in \R^p$ is trivial). 
Moreover, define the functions
\begin{align}
&h(w,t) := 1\left\{-\phi^{-1}-t/\beta< \max_{1\leq j\leq p}(w_j-y_j)\leq \phi^{-1}+t/\beta\right\}, \ w \in \R^{p}, t > 0, \label{eq: def iota} \\
&\omega(t) :=\frac{1}{\sqrt{t}\wedge\sqrt{1-t}}, \ t \in (0,1). \notag
\end{align}

The proof consists of two steps. In the first step, we show that
\begin{equation}\label{eq: to prove lemma 5.1}
|\Ep[\mathcal{I}_n]|
 \lesssim 
\frac{\phi^2\log^2 p}{n^{1/2}}\left(\phi L_n \varrho_{n}+L_n \log^{1/2} p+\phi M_n(\phi)\right)
\end{equation}
where
\[
\mathcal{I}_n:=m(\sqrt{v}S^X_{n}+\sqrt{1-v}S^Y_{n})-m(S^W_{n}).
\]
In the second step, we combine this bound with Lemma \ref{lem: anti-concentration} to complete the proof.

\medskip

{\bf Step 1}. Define the Slepian interpolant
\[
Z(t):=\sum_{i=1}^n Z_i(t), \ t \in [0,1],
\]
where
\[
Z_i(t):=\frac{1}{\sqrt{n}}\left\{ \sqrt{t}(\sqrt{v}X_i+\sqrt{1-v}Y_i)+\sqrt{1-t}W_i \right \}.
\]
Note that $Z(1)=\sqrt{v}S^X_{n}+\sqrt{1-v}S^Y_{n}$ and $Z(0)=S^W_{n}$, 
and so
\begin{equation}\label{eq: interpolation}
\mathcal{I}_n=m(\sqrt{v}S^X_{n}+\sqrt{1-v}S^Y_{n})-m(S^W_{n})=\int_0^1 \frac{d m(Z(t))}{d t}d t.
\end{equation}
Denote by $Z^{(i)}(t)$ the Stein leave-one-out term for $Z(t)$:
\[
Z^{(i)}(t):=Z(t)-Z_i(t).
\]
Finally, define
\[
\dot{Z}_i(t):=\frac{1}{\sqrt{n}}\left \{ \frac{1}{\sqrt{t}}(\sqrt{v}X_i+\sqrt{1-v}Y_i)-\frac{1}{\sqrt{1-t}}W_i \right \}.
\]
For brevity of notation, we omit the argument $t$; that is, we write $Z=Z(t)$, $Z_i=Z_i(t)$, $Z^{(i)}=Z^{(i)}(t)$, and $\dot{Z}_i=\dot{Z}_i(t)$.

Now, from (\ref{eq: interpolation}) and Taylor's theorem, we have
\[
\Ep[\mathcal{I}_n]=\frac{1}{2}\sum_{j=1}^p\sum_{i=1}^n\int_0^1\Ep[m_j(Z)\dot{Z}_{i j}]d t=\frac{1}{2}(I+II+III),
\]
where
\begin{align*}
&I:=\sum_{j=1}^p\sum_{i=1}^n\int_0^1\Ep[m_j(Z^{(i)})\dot{Z}_{i j}]d t,\\
&II:=\sum_{j,k=1}^p\sum_{i=1}^n\int_0^1\Ep[m_{j k}(Z^{(i)})\dot{Z}_{i j}Z_{i k}]d t,\\
&III:=\sum_{j,k,l=1}^p\sum_{i=1}^n\int_0^1\int_0^1(1-\tau)\Ep[m_{j k l}(Z^{(i)}+\tau Z_i)\dot{Z}_{i j}Z_{i k}Z_{i l}]d\tau d t.
\end{align*}
By independence of $Z^{(i)}$ from $\dot{Z}_{ij}$ together with $\Ep[\dot{Z}_{i j}]=0$, we have $I=0$. Also, by independence of $Z^{(i)}$ from $\dot{Z}_{i j}Z_{i k}$ together with
\begin{align*}
\Ep[\dot{Z}_{i j}Z_{i k}]&=\frac{1}{n}\Ep\left[(\sqrt{v}X_{i j}+\sqrt{1-v}Y_{i j})(\sqrt{v}X_{i k}+\sqrt{1-v}Y_{i k})-W_{i j}W_{i k}\right]\\
&=\frac{1}{n}\Ep[v X_{i j}X_{i k}+(1-v)Y_{i j}Y_{i k}-W_{i j}W_{i k}]=0,
\end{align*}
we have $II=0$. Therefore, it suffices to bound $III$.

To this end, let
\[
\chi_i:=1\left\{\max_{1\leq j\leq p}|X_{i j}|\vee |Y_{i j}|\vee |W_{i j}|\leq \sqrt{n}/(4\beta)\right\}, \ i=1,\dots,n
\]
and decompose $III$ as $III=III_1+III_2$, where
\begin{align*}
&III_1:=\sum_{j,k,l=1}^p\sum_{i=1}^n\int_0^1\int_0^1(1-\tau)\Ep[\chi_i m_{jkl}(Z^{(i)}+\tau Z_i)\dot{Z}_{ij}Z_{i k}Z_{il}]d\tau d t,\\
&III_2:=\sum_{j,k,l=1}^p\sum_{i=1}^n\int_0^1\int_0^1(1-\tau)\Ep[(1-\chi_i)m_{jkl}(Z^{(i)}+\tau Z_i)\dot{Z}_{ij}Z_{i k}Z_{il}]d\tau d t.
\end{align*}
We shall bound $III_1$ and $III_2$ separately. For $III_2$, we have
\begin{align}
|III_2|&\leq \sum_{j,k,l=1}^p\sum_{i=1}^n\int_0^1\int_0^1\Ep[(1-\chi_i) U_{j k l}(Z^{(i)}+\tau Z_i)|\dot{Z}_{ij}Z_{ik}Z_{il}|]d\tau dt \notag\\
&\lesssim \phi\beta^2\sum_{i=1}^n\int_0^1 \Ep[(1-\chi_i)\max_{1\leq j,k,l\leq p}|\dot{Z}_{ij}Z_{ik}Z_{il}|] dt \notag\\
&\lesssim \frac{\phi\beta^2}{n^{3/2}}\sum_{i=1}^n\int_0^1 \omega(t)\Ep[(1-\chi_i)\max_{1\leq j\leq p}|X_{ij}|^3\vee|Y_{ij}|^3\vee|W_{i j}|^3] dt, \label{eq: integral}
\end{align}
where the first and the second inequalities follow from (\ref{eq: property1}) and (\ref{eq: property2}), respectively. 
Moreover, by letting $\mathcal{T}=\sqrt{n}/(4\beta)$ and using the union bound, we have
\[
1-\chi_i\leq 1\left\{\max_{1\leq j\leq p}|X_{ij}|>\mathcal{T}\right\}+1\left\{\max_{1\leq j\leq p}|Y_{ij}|>\mathcal{T}\right\}+1\left\{\max_{1\leq j\leq p}|W_{ij}|>\mathcal{T}\right\}.
\]
Hence, using the inequality
\[
\max_{1\leq j\leq p}|X_{ij}|^3\vee|Y_{ij}|^3\vee|W_{i j}|^3\leq \max_{1\leq j\leq p}|X_{ij}|^3+\max_{1\leq j\leq p}|Y_{ij}|^3+\max_{1\leq j\leq p}|W_{ij}|^3
\]
together with the inequality (\ref{eq: simp3}) in Lemma \ref{lem: correlations}, we conclude that the integral in (\ref{eq: integral}) is bounded from above up to a universal constant by
\[
\Ep\left[\max_{1\leq j\leq p}|X_{ij}|^31\left\{\max_{1\leq j\leq p}|X_{ij}|>\mathcal{T}\right\}\right]+\Ep\left[\max_{1\leq j\leq p}|Y_{ij}|^31\left\{\max_{1\leq j\leq p}|Y_{ij}|>\mathcal{T}\right\}\right]
\]
since $W_i$'s have the same distribution as that of $Y_i$'s. Therefore,
\[
|III_2|\lesssim (M_{n,X}(\phi)+M_{n,Y}(\phi))\phi\beta^2/n^{1/2}=M_n(\phi)\phi\beta^2/n^{1/2}.
\]

To bound $III_1$, recall the definition of $h(w,t)$ in (\ref{eq: def iota}). Note that $m_{j k l}(Z^{(i)}+\tau Z_i)=0$ for all $\tau\in[0,1]$ whenever $h(Z^{(i)},1)=0$ and $\chi_i=1$, so that 
\begin{equation}\label{eq: switching cost}
\chi_i |m_{j k l}(Z^{(i)}+\tau Z_i)| = h(Z^{(i)},1) \chi_i |m_{j k l}(Z^{(i)}+\tau Z_i)|.
\end{equation}
Indeed if $\chi_i=1$, then $\max_{1\leq j\leq p}|Z_{i j}|\leq 3/(4\beta)< 1/\beta$, and so when $h(Z^{(i)},1)=0$ and $\chi_i=1$, we have  $h(Z^{(i)}+\tau Z_i,0)=0$, which in turn implies that either $F_{\beta}(Z^{(i)}+\tau Z_i)\leq 0$ or $F_{\beta}(Z^{(i)}+\tau Z_i)\geq \phi^{-1}$ because of (\ref{eq: F properties}); in both cases, the assertion follows from the definitions of $m$ and $g$. Hence
\begin{align}
|III_1|&\leq \sum_{j,k,l=1}^p\sum_{i=1}^n\int_0^1\int_0^1\Ep[\chi_i|m_{jkl}(Z^{(i)}+\tau Z_i)\dot{Z}_{ij}Z_{ik}Z_{il}|]d\tau dt \notag \\
&\lesssim \sum_{j,k,l=1}^p\sum_{i=1}^n\int_0^1\int_0^1\Ep[\chi_i h(Z^{(i)},1) U_{j k l}(Z^{(i)}+\tau Z_i)|\dot{Z}_{ij}Z_{ik}Z_{il}|]d\tau dt \notag \\
&\lesssim \sum_{j,k,l=1}^p\sum_{i=1}^n\int_0^1\int_0^1\Ep[\chi_i h(Z^{(i)},1) U_{j k l}(Z^{(i)})|\dot{Z}_{ij}Z_{ik}Z_{il}|]d\tau dt \notag \\
&\lesssim \sum_{j,k,l=1}^p\sum_{i=1}^n\int_0^1\Ep[h(Z^{(i)},1) U_{jkl}(Z^{(i)})]\Ep[|\dot{Z}_{ij}Z_{ik}Z_{il}|]d t, \label{eq: integral 2}
\end{align}
where the second inequality follows from (\ref{eq: property1}) and \eqref{eq: switching cost}, the third inequality from (\ref{eq: property3}), and the fourth inequality from the indepence of $Z^{(i)}$ from $\dot{Z}_{ij}Z_{ik}Z_{il}$.
Then we split the integral in (\ref{eq: integral 2}) by inserting $\chi_i+(1-\chi_i)$ under the first expectation sign. We have
\begin{align*}
&\sum_{j,k,l=1}^p\sum_{i=1}^n\int_0^1\Ep[(1-\chi_i)h(Z^{(i)},1) U_{jkl}(Z^{(i)})]\Ep[|\dot{Z}_{ij}Z_{i k}Z_{il}|] dt \\
&\quad \lesssim \phi\beta^2\sum_{i=1}^n\int_0^1 \Ep[1-\chi_i]\Ep\left[\max_{1\leq j,k,l\leq p}|\dot{Z}_{ij}Z_{ik}Z_{i l}|\right] dt \lesssim M_n(\phi)\phi\beta^2/n^{1/2},
\end{align*}
where the last inequality follows from the argument similar to that used to bound $III_2$ with applying (\ref{eq: simp1}) and (\ref{eq: simp2}) instead of (\ref{eq: simp3}) in Lemma \ref{lem: correlations}. Moreover, since $h(Z^{(i)},1)=0$ whenever $h(Z,2)=0$ and $\chi_i=1$ (which follows from the same argument as before), so that
$$
\chi_i h(Z^{(i)},1) = \chi_i h(Z^{(i)},1) h(Z,2),
$$
we have
\begin{align}
&\sum_{j,k,l=1}^p\sum_{i=1}^n\int_0^1\Ep[\chi_ih(Z^{(i)},1) U_{jkl}(Z^{(i)})]\Ep[|\dot{Z}_{ij}Z_{ik}Z_{il}|]d t\notag \\
&\qquad \lesssim \sum_{j,k,l=1}^p\sum_{i=1}^n\int_0^1\Ep[\chi_ih(Z^{(i)},1) U_{jkl}(Z)]\Ep[|\dot{Z}_{ij}Z_{ik}Z_{il}|]d t\notag \\
&\qquad \lesssim \sum_{j,k,l=1}^p\sum_{i=1}^n\int_0^1\Ep[h(Z,2) U_{jkl}(Z)]\Ep[|\dot{Z}_{ij}Z_{ik}Z_{il}|]d t\notag \\ 
&\qquad = \sum_{j,k,l=1}^p\int_0^1\Ep[h(Z,2) U_{j k l}(Z)]\sum_{i=1}^n\Ep[|\dot{Z}_{ij}Z_{ik}Z_{il}|]d t\notag \\
&\qquad \lesssim \phi\beta^2\int_0^1\Ep[h(Z,2)]\max_{1\leq j,k,l\leq p}\sum_{i=1}^n \Ep[|\dot{Z}_{ij}Z_{ik}Z_{il}|]d t. \label{eq: integral 3}
\end{align}
To bound (\ref{eq: integral 3}), observe that
\begin{multline*}
|\dot{Z}_{ij}Z_{ik}Z_{il}|\lesssim \frac{\omega(t)}{n^{3/2}}\Big(|X_{ij}|^{3}+|Y_{ij}|^{3}+|W_{ij}|^{3}\\ + |X_{ik}|^{3}+|Y_{ik}|^{3}+|W_{ik}|^{3} + |X_{il}|^{3}+|Y_{il}|^{3}+|W_{il}|^{3}\Big),
\end{multline*}
which, together with the facts that $\Ep[|W_{ij}|^3] = \Ep[|Y_{ij}|^3]$ and $\Ep[|Y_{ij}|^3]\lesssim (\Ep[|Y_{i j}|^2])^{3/2}=(\Ep[|X_{ij}|^2])^{3/2}\leq \Ep[|X_{ij}|^3]$, implies that
\[
\max_{1\leq j,k,l\leq p}\sum_{i=1}^{n}\Ep[|\dot{Z}_{ij}Z_{ik}Z_{il}|]  \lesssim  \frac{\omega(t)}{n^{3/2}}\max_{1\leq j \leq p}\sum_{i=1}^{n}(\Ep[|X_{ij}|^{3}]+\Ep[|Y_{ij}|^{3}]) \lesssim \frac{\omega(t)}{n^{1/2}}L_{n}.
\]
Meanwhile, observe that
\[
\Ep[h(Z,2)]=\Pr(Z \leq \overline{I})-\Pr(Z \leq \underline{I}),
\]
where
\begin{align*}
Z & =  \frac{1}{\sqrt{n}}\sum_{i=1}^{n}(\sqrt{tv}X_{i}+\sqrt{t(1-v)}Y_{i}+\sqrt{1-t}W_{i})\\
 & \stackrel{d}{=}  \frac{1}{\sqrt{n}}\sum_{i=1}^{n}(\sqrt{tv} X_{i}+\sqrt{1-tv}Y_{i}),
\end{align*}
and $\underline{I}=y-\phi^{-1}-2\beta^{-1}$, $\overline{I}=y+\phi^{-1}+2\beta^{-1}$; here the notation $\stackrel{d}{=}$ denotes equality in distribution, and $\underline{I}$ and $\overline{I}$ are vectors in $\mathbb{R}^p$ (recall
the rules of summation of vectors and scalars defined in Section 1.1).
Now by the definition of $\varrho_{n}$,
\begin{align*}
\Pr(Z\leq\overline{I}) & \leq  \Pr(S_{n}^{Y}\leq\overline{I})+\varrho_{n},\quad \Pr(Z\leq\underline{I}) \geq  \Pr(S_{n}^{Y}\leq\underline{I})-\varrho_{n},
\end{align*}
and by Lemma \ref{lem: anti-concentration},
\[
\Pr(S_{n}^{Y}\leq\overline{I})-\Pr(S_{n}^{Y}\leq\underline{I})\lesssim\phi^{-1}\log^{1/2}p
\]
since $\beta^{-1}\lesssim \phi^{-1}$ and $\Ep[(S_{n j}^Y)^2] = \Ep[(S_{n j}^X)^2] = n^{-1}\sum_{i=1}^n \Ep[X_{i j}^2]\geq b$ for all $j=1,\dots,p$.
Hence
\[
\Ep[h(Z,2)]\lesssim\varrho_{n}+\phi^{-1}\log^{1/2}p.
\]
By these bounds, together with the fact that $\int_{0}^{1}\omega(t)dt\lesssim1$, we conclude that
\[
(\ref{eq: integral 3})\lesssim\frac{\phi\beta^{2}L_{n}}{n^{1/2}}(\varrho_{n}+\phi^{-1}\log^{1/2}p)\lesssim\frac{\phi^{2}\log^{2}p}{n^{1/2}}(\phi L_{n}\varrho_{n}+L_{n}\log^{1/2}p),
\]
where we have used $\beta=\phi \log p$. The desired assertion (\ref{eq: to prove lemma 5.1}) then follows.

\medskip

{\bf Step 2}. We are now in position to finish the proof. Let
\[
V_{n}:=\sqrt{v}S^X_{n}+\sqrt{1-v}S^Y_{n}.
\]
Then we have
\begin{align*}
&\Pr(V_{n}\leq y-\phi^{-1})
\leq \Pr(F_\beta(V_{n})\leq 0) \leq \Ep[m(V_{n})] \\
&\quad \leq \Pr(F_\beta(S_{n}^W)\leq\phi^{-1})+(\Ep[m(V_{n})]-\Ep[m(S_{n}^W)]) \\
&\quad \leq \Pr(S_{n}^W \leq y+\phi^{-1})+|\Ep[\mathcal{I}_n]| \\
&\quad \leq \Pr(S_{n}^W \leq y-\phi^{-1})+C\phi^{-1}\log^{1/2} p+|\Ep[\mathcal{I}_n]|,
\end{align*}
where the first three lines follow from the properties of $F_{\beta}(w)$ and $g(t)$ (recall that $m(w)=g(F_\beta(w))$), and the last inequality follows from Lemma \ref{lem: anti-concentration}. Here the constant $C$ depends only on $b$. 
Likewise we have
\[
\Pr(V_{n} \leq y-\phi^{-1}) \geq \Pr(S_{n}^W \leq y-\phi^{-1})-C\phi^{-1}\log^{1/2} p - |\Ep[\mathcal{I}_n]|.
\]
The conclusion of the lemma follows from combining these inequalities with the bound on $|\Ep[\mathcal{I}_{n}]|$ derived in Step 1.
\qed

\subsection*{Proof of Corollary \ref{cor: induction lemma}}
Pick any hyperrectangle 
\[
A=\{ w \in \R^p: w_j \in [a_j,b_j] \ \text{for all} \ j=1,\dots,p\}.
\]
For $i=1,\dots,n$, consider the random vectors $\tilde{X}_i$ and $\tilde{Y}_i$ in $\R^{2p}$ defined by $\tilde{X}_{ij}=X_{ij}$ and $\tilde{Y}_{ij}=Y_{ij}$ for $j=1,\dots,p$, and $\tilde{X}_{ij}=-X_{i,j-p}$ and $\tilde{Y}_{ij}=-Y_{i,j-p}$ for $j=p+1,\dots,2p$. 
Then
\[
\Pr(S_{n}^X \in A)=\Pr(S_{n}^{\tilde{X}} \leq y), \quad \Pr(S_{n}^Y\in A)=\Pr(S_{n}^{\tilde{Y}} \leq y),
\]
where the vector $y \in \R^{2p}$ is defined by $y_j=b_j$ for $j=1,\dots,p$ and $y_j=-a_{j-p}$ for $j=p+1,\dots,2p$, and $S_{n}^{\tilde{X}}$ and $S_{n}^{\tilde{Y}}$ are defined as $S_{n}^X$ and $S_{n}^Y$ with $X_i$'s and $Y_i$'s replaced by $\tilde{X}_i$'s and $\tilde{Y}_i$'s. Hence the corollary follows from applying Lemma \ref{lem: induction lemma} to  $\tilde{X}_{1},\dots,\tilde{X}_{n}$ and $\tilde{Y}_{1},\dots,\tilde{Y}_{n}$. 
\qed

\section{Proofs for Section \ref{sec: main results}}

\subsection*{Proof of Theorem \ref{thm: main}}
The proof relies on Lemma \ref{lem: induction lemma} and its Corollary \ref{cor: induction lemma}.
Let $K'$ denote a constant from the conclusion of Corollary \ref{cor: induction lemma}. This constant depends only on $b$. Set $K_2:=1/(K'\vee 1)$ in (\ref{eq: psi}), so that
\[
\phi_n=\frac{1}{K'\vee 1}\left(\frac{\overline{L}_{n}^2\log^4 p}{n}\right)^{-1/6}.
\]
Without loss of generality, we may assume that $\phi_n \geq 2$; otherwise, the assertion of the theorem holds trivially by setting $K_1=2(K'\vee 1)$. 

Then applying Corollary \ref{cor: induction lemma} with $\phi=\phi_n/2$, we have
\[
\varrho_{n}'\leq \frac{\varrho_{n}'}{8(K'\vee 1)^2}+\frac{3(K'\vee 1)^2 \overline{L}_{n}^{1/3}\log^{7/6} p}{n^{1/6}}+\frac{M_{n}(\phi_n)}{8(K'\vee 1)^2\overline{L}_{n}}.
\]
Since $8(K'\vee 1)^2 >1$, solving this inequality for $\varrho_n'$ and observing that $\rho_n(\mA^{\re})\leq \varrho_{n}'$ leads to the desired assertion.
\qed

\medskip

Before proving Proposition \ref{cor: examples}, we shall verify the following elementary inequality. 
\begin{lemma}
\label{lem: truncated moment}
Let $\xi$ be a non-negative random variable such that $\Pr(\xi >x)\leq A e^{-x/B}$ for all $x\geq 0$ and for some constants $A, B>0$. Then for every $t\geq 0$, $\Ep[\xi ^31\{\xi >t\}]\leq 6A(t+B)^3e^{-t/B}$. 
\end{lemma}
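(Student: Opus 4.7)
The lemma is an elementary tail-integration computation, so the plan is to reduce it to a one-dimensional exponential integral and then read off the constant. The main step will be the layer-cake representation
\[
\Ep[\xi^{3}1\{\xi>t\}] = t^{3}\Pr(\xi>t) + 3\int_{t}^{\infty}x^{2}\Pr(\xi>x)\,dx,
\]
which I would derive either by integration by parts on $\int_{t}^{\infty}x^{3}\,dF(x)$ (noting that $x^{3}(1-F(x))\to 0$ thanks to the assumed exponential tail, so the boundary term vanishes at infinity), or equivalently by writing $\xi^{3}1\{\xi>t\}=\int_{0}^{\infty}1\{\xi>\max(t,s^{1/3})\}\,ds$ and splitting at $s=t^{3}$.

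Next I would plug in the assumption $\Pr(\xi>x)\leq Ae^{-x/B}$, giving
\[
\Ep[\xi^{3}1\{\xi>t\}]\leq Ae^{-t/B}\Bigl(t^{3}+3e^{t/B}\int_{t}^{\infty}x^{2}e^{-x/B}\,dx\Bigr).
\]
The substitution $x=t+By$ turns the inner integral into $B\int_{0}^{\infty}(t+By)^{2}e^{-y}\,dy = B(t^{2}+2tB+2B^{2})$, so
\[
\Ep[\xi^{3}1\{\xi>t\}]\leq Ae^{-t/B}\bigl(t^{3}+3Bt^{2}+6tB^{2}+6B^{3}\bigr).
\]

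The remaining routine step is to verify the polynomial inequality
\[
t^{3}+3Bt^{2}+6tB^{2}+6B^{3}\leq 6(t+B)^{3},
\]
which after expanding the right-hand side amounts to $5t^{3}+15Bt^{2}+12tB^{2}\geq 0$, obviously true for $t,B\geq 0$. Combining yields the claimed bound $\Ep[\xi^{3}1\{\xi>t\}]\leq 6A(t+B)^{3}e^{-t/B}$. There is no real obstacle here; the only thing to be careful about is that the tail-integration identity requires enough integrability to kill the boundary term at $\infty$, which is immediate from the exponential tail hypothesis.
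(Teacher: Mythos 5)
Your proof is correct and follows essentially the same route as the paper's: the same tail-integration identity $\Ep[\xi^{3}1\{\xi>t\}]=t^{3}\Pr(\xi>t)+3\int_{t}^{\infty}x^{2}\Pr(\xi>x)\,dx$, the same evaluation of the exponential integral giving $A(t^{3}+3Bt^{2}+6B^{2}t+6B^{3})e^{-t/B}$, and the same final polynomial comparison with $6(t+B)^{3}$. Nothing to add.
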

\begin{proof}[Proof of Lemma \ref{lem: truncated moment}]
Observe that 
\begin{align*}
\Ep[\xi^31\{\xi>t\}]
&=3\int_0^t \Pr(\xi >t)x^2 d x + 3\int_t^\infty \Pr(\xi>x)x^2 d x \\
&=\Pr(\xi>t)t^3 + 3\int_t^\infty \Pr(\xi>x)x^2 d x.
\end{align*}
Since $\Pr(\xi>x)\leq A e^{-x/B}$, using integration by parts, we have 
\begin{align*}
\int_t^\infty \Pr(\xi>s)x^2 d x
\leq A(B t^2+ 2B^2 t + 2B^3)e^{-t/B},
\end{align*}
which leads to 
\[
\Ep[\xi^31\{\xi>t\}]\leq A(t^3+3B t^2+ 6B^2 t + 6B^3)e^{-t/B}\leq 6A(t+B)^3e^{-t/B},
\]
completing the proof.
\end{proof}

\subsection*{Proof of Proposition \ref{cor: examples}}
The proof relies on application of Theorem \ref{thm: main}. Without loss of generality, we may assume that
\begin{equation}
\label{eq: additional assumption}
\frac{B_n^2\log^{7} (pn)}{n}\leq c:=\min \{ (c_{1}/2)^3, (K_2/2)^6 \},
\end{equation}
where $K_2$ appears in (\ref{eq: psi}) and $c_{1} > 0$ is a constant that depends only on $b$ ($c_{1}$ will be defined later), since
otherwise we can make the assertions trivial by setting $C$ large enough. 


Now by Theorem \ref{thm: main}, we have
\[
\rho_n(\mA^{\re})\leq K_1\left[\left(\frac{\overline{L}_{n}^2\log^{7}p}{n}\right)^{1/6}+\frac{M_{n,X}(\phi_n)+M_{n,Y}(\phi_n)}{\overline{L}_{n}}\right],
\]
where $\phi_{n} = K_{2} \{ n^{-1} \overline{L}_{n}^{2} \log^{4}p\}^{-1/6}$, and $\overline{L}_{n}$ is any constant such that $\overline{L}_{n} \geq L_n$.
Recall that 
\begin{align*}
&L_n = \max_{1\leq j\leq p}\sum_{i=1}^n \Ep[|X_{ij}|^3]/n, \\
&M_{n,X}(\phi_n) = n^{-1}\sum_{i=1}^n\Ep\left[\max_{1\leq j\leq p}|X_{ij}|^31\left\{\max_{1\leq j\leq p}|X_{ij}|>\sqrt{n}/(4\phi_n\log p)\right\}\right],
\end{align*}
and $M_{n,Y}(\phi_n)$ is defined similarly with $X_{i j}$'s replaced by $Y_{i j}$'s.

It remains to choose a suitable constant $\overline{L}_n$ such that $\overline{L}_{n} \geq L_n$ and bound  $M_{n,X}(\phi_n)$ and $M_{n,Y}(\phi_n)$. 
To this end, we consider cases (E.1) and (E.2) separately. In what follows, the notation $\lesssim$ means that the left-hand side is bounded by the right-hand side up to a positive constant that depends only on $b$ under case (E.1), and on $b$ and $q$ under case (E.2).

\medskip

\textbf{Case (E.1)}.
Set $\overline{L}_n:=B_n$. By condition (M.2), we have $L_n \leq B_n =\overline{L}_{n}$.
Observe that (E.1) implies that $\|X_{ij}\|_{\psi_1}\leq B_n$ for all $i$ and $j$. In addition, since each $Y_{i j}$ is Gaussian and $\Ep[Y_{ij}^2]=\Ep[X_{ij}^2]$, $\|Y_{ij}\|_{\psi_1} \leq C_1B_n$ for all $i$ and $j$ and some universal constant $C_1>0$.  Hence by Lemma 2.2.2 in \cite{VW96}, we have for some universal constant $C_{2}> 0$, $\|\max_{1\leq j\leq p} X_{ij}\|_{\psi_1} \leq C_{2} B_n\log p$ and $\|\max_{1\leq j\leq p} Y_{ij}\|_{\psi_1} \leq C_{2} B_n\log p$.
Together with Markov's inequality, this implies that for every $t > 0$,
\[
\Pr\left(\max_{1\leq j\leq p}|X_{ij}|> t\right)\leq 2\exp\left(-\frac{t}{C_{2} B_n\log p}\right).
\]
Applying Lemma \ref{lem: truncated moment}, we have
\[
M_{n,X}(\phi_n) \lesssim (\sqrt{n}/(\phi_n\log p)+B_n\log p)^3\exp\left(-\frac{\sqrt{n}}{4C_{2}\phi_n B_n\log^{2} p}\right).
\]
Here 
\begin{align*}
&\frac{\sqrt{n}}{4C_{2}\phi_n B_n\log^{2} p} = \frac{c_{1}n^{1/3}}{B_{n}^{2/3} \log^{4/3}p} \quad \left (c_{1} := \frac{1}{4K_{2}C_{2}}\right) \\
&\quad \geq c_{1} c^{-1/3} \log (pn) \geq 2 \log (pn). \quad (\text{by (\ref{eq: additional assumption})}).
\end{align*}
Moreover, by (\ref{eq: additional assumption}) and $\phi_{n}^{-1} = K_{2}^{-1} \{ n^{-1} B_{n}^{2} \log^{4}p \}^{1/6} \leq c^{1/6}/K_{2} \leq 1$,
we have $(\sqrt{n}/(\phi_n\log p)+B_n\log p)^3 \lesssim n^{3/2}$, 
which implies that 
\[
M_{n,X}(\phi_{n}) \lesssim n^{3/2} \exp (-2\log (pn)) \leq n^{-1/2}.
\]
The same reasoning also gives $M_{n,Y}(\phi_{n}) \lesssim n^{-1/2}$.
The conclusion of the proposition in this case now follows from the fact that $n^{-1/2} B_{n}^{-1} \leq D_{n}^{(1)}$.

\medskip
\textbf{Case  (E.2)}. Without loss of generality, in addition to (\ref{eq: additional assumption}), we may assume that
\begin{equation}\label{eq: additional assumption2}
\frac{B_n \log^{3/2} p}{n^{1/2-1/q}} \leq (K_2/2)^{3/2}.
\end{equation}
Set
\[
\overline{L}_n : = \left \{ B_n+\frac{B_n^2}{n^{1/2-2/q} \log^{1/2}p}\right \}.
\]
Then $L_n\leq B_n\leq \overline{L}_{n}$. 
As the map $x \mapsto x^{1/3}$ is sub-linear, $\{ n^{-1} \overline{L}_{n}^{2} \log^{7}p \}^{1/6} \leq D_{n}^{(1)}+D_{n,q}^{(2)} \leq K_{2}$, so that $\phi_{n}^{-1} = K_{2}^{-1} \{ n^{-1} \overline{L}_{n}^{2} \log^{4} p \}^{1/6}  \leq 1$. 

Note that for any real-valued random variable $Z$ and any $t > 0$, $\Ep [ |Z|^3 1(|Z|>t) ]\leq \Ep[ |Z|^3 (|Z|/t)^{q-3} 1(|Z|>t) ]\leq t^{3-q} \Ep[|Z|^{q}]$. Hence
\begin{equation*}
M_{n,X}(\phi_n)\lesssim \frac{B_n^q\phi_{n}^{q-3} \log^{q-3}p}{n^{q/2-3/2}}.
\end{equation*}
Here using the bound $\overline{L}_{n}^{-1} \leq B_{n}^{-2} n^{1/2-2/q} \log^{1/2}p$, we have that $\phi_{n} \lesssim n^{1/3-2/(3q)} B_{n}^{-2/3} (\log p)^{-1/2}$,
so that 
\[
M_{n,X}(\phi_n)\lesssim \frac{B_n^{q/3+2}(\log p)^{q/2-3/2}}{n^{q/6+1/6-2/q}},
\]
which implies that 
\begin{align*}
M_{n,X}(\phi_{n})/\overline{L}_{n} 
&\lesssim \frac{B_n^{q/3+2}(\log p)^{q/2-3/2}}{n^{q/6+1/6-2/q}}\cdot  \frac{n^{1/2 - 2/q} \log^{1/2}p}{B_n^2}\\
&\lesssim\frac{1}{\log p}\Big(\frac{B_n^2 \log^3 p}{n^{1 - 2/q}}\Big)^{q/6} \lesssim D_{n,q}^{(2)}.
\end{align*}
Meanwhile, as in the previous case, we have $M_{n,Y}(\phi_{n}) \lesssim n^{-1/2}$, which leads to the desired conclusion in this case.
\qed

\section{Proofs for Section \ref{sec: friendly sets}}

\subsection*{Proof of Proposition \ref{cor: sparsely convex bodies}}
Here $C$ denotes a generic positive constant that depends only on $a,b$, and $d$ if (E.1$'$) is satisfied, and on $a,b,d$, and $q$ if (E.2$'$) is satisfied; the value of $C$ may change from place to place.  Pick any $A\in\mathcal{A}\subset\mA^{\si}(a,d)$. Let $A^m=A^m(A)$ be an approximating $m$-generated convex set as in condition (C). 
By assumption, $A^m\subset A\subset A^{m,\epsilon}$, so that 
by letting
\[
\overline{\rho} :=|\Pr(S_n^X\in A^{m})-\Pr(S_n^Y\in A^{m})|\vee |\Pr(S_n^X\in A^{m,\epsilon})-\Pr(S_n^Y\in A^{m,\epsilon})|,
\]
we have $\Pr(S_n^X \in A) \leq \Pr(S_n^X \in A^{m,\epsilon}) \leq \Pr(S_n^Y \in A^{m,\epsilon})+\overline{\rho}$. 
Here observe that $(v'S_{n}^{Y})_{v \in \mathcal{V}(A^{m})}$ is a Gaussian random vector with dimension $\Card (\mathcal{V}(A^{m})) = m \leq (pn)^{d}$ such that, by condition (M.1$'$),  the variance of each coordinate is bounded from below by $b$. 
Hence by Lemma \ref{lem: anti-concentration}, we have 
\begin{align*}
\Pr(S_n^Y \in A^{m,\epsilon}) &= \Pr \{ v'S_{n}^{Y} \leq \mathcal{S}_{A^{m}}(v)+\epsilon \ \text{for all} \ v \in \mathcal{V}(A^{m}) \} \\
&\leq \Pr \{ v'S_{n}^{Y} \leq \mathcal{S}_{A^{m}}(v) \ \text{for all} \ v \in \mathcal{V}(A^{m}) \} + C \epsilon \log^{1/2}m \\
&= \Pr(S_n^Y \in A^{m}) + C\epsilon \log^{1/2} (pn),
\end{align*}
so that 
\begin{align*}
\Pr(S_n^X \in A) &\leq \Pr(S_n^Y\in A^m)+C\epsilon \log^{1/2}(pn) +\overline{\rho} \\
&\leq \Pr(S_n^Y\in A)+C\epsilon \log^{1/2}(pn) + \overline{\rho}. \quad (\text{by} \ A^{m} \subset A)
\end{align*}
Likewise we have $\Pr(S_n^X \in A) \geq \Pr(S_n^Y \in A)-C\epsilon \log^{1/2}(pn)-\overline{\rho}$, 
by which we conclude
\[
|\Pr(S_n^X \in A) - \Pr(S_n^Y \in A)| \leq C\epsilon \log^{1/2}(pn) + \overline{\rho}. 
\]
Recalling that $\epsilon = a/n$ and $B_{n} \geq 1$, we have $\epsilon \log^{1/2}(pn) \leq C D_{n}^{(1)}$. 
Hence the assertions of the proposition follow if we prove 
\[
\overline{\rho} \leq 
\begin{cases}
C D_{n}^{(1)} & \text{if (E.1$'$) is satisfied}, \\
C\{ D_{n}^{(1)}+D_{n,q}^{(2)} \} & \text{if (E.2$'$) is satisfied}.
\end{cases}
\]
However, this follows from application of Proposition \ref{cor: examples} to $\tilde{X}_1,\dots,\tilde{X}_n$ instead of $X_{1},\dots,X_{n}$. 
\qed

\subsection*{Proof of Corollary \ref{cor: log-concave distributions}}
Since $X_{i}$ is a centered random vector with a log-concave distribution in $\R^{p}$, Borell's inequality \citep[see][Lemma 3.1]{B74} implies that $\| v'X_{i} \|_{\psi_{1}} \leq c (\Ep[ (v'X_{i})^{2} ])^{1/2}$ for all $v \in \R^{p}$ for some universal constant $c > 0$ \citep[see][Appendix III]{MS86}; hence if the maximal eigenvalue of each $\Ep [X_{i}X_{i}']$ is bounded by a constant $k_2$, then every simple convex set $A\in\mA^{\si}(a,d)$ obeys conditions (M.2$'$) and (E.1$'$) with $B_n$ replaced by a constant that depends only on $c$ and $k_2$. Besides if the minimal eigenvalue of each $\Ep [X_{i}X_{i}']$ is bounded from below by a constant $k_1$, then every simple convex set $A\in\mA^{\si}(a,d)$ obeys condition (M.1$'$) with $b$ replaced by a positive constant that depends only on $k_1$. Hence the conclusion of the corollary follows from application of Proposition \ref{cor: sparsely convex bodies}.
\qed

\subsection*{Proof of Proposition \ref{cor: sparsely convex sets}}
Here $C$ denotes a positive constant that depends only on $b$ and $s$ if condition (E.1) is satisfied, and on $b,s$, and $q$ if condition (E.2) is satisfied; the value of $C$ may change from place to place. Without loss of generality,  we may assume that $B_n^2 \leq n$ since otherwise the assertions are trivial.

Let $R := p n^{5/2}$ and $V^R := \{w\in \mathbb{R}^p: \max_{1\leq j\leq p}|w_j|>R\}$. Fix any $A\in\mA^{\spa}(s)$. Then $A=\check{A} \cup (A \cap V^R)$ for some $s$-sparsely convex set $\check{A} \subset \R^p$ such that $\sup_{w\in \check{A}}\max_{1\leq j\leq p}|w_j|\leq R$. Now observe that by Markov's inequality,
\begin{align*}
&\Pr\left(\max_{i,j}|X_{ij}|>p n^2\right)
\leq \frac{\Ep[\max_{i,j}|X_{ij}|]}{p n^2} \leq \frac{\Ep[\sum_{i,j}|X_{ij}|]}{p n^2} \\
&\quad \leq \max_{i,j}\Ep[|X_{ij}|]/n\leq C B_n/n\leq C/n^{1/2},
\end{align*}
where $\max_{i,j}$ stands for $\max_{1\leq i\leq n}\max_{1\leq j\leq p}$. Hence
\[
\Pr\left(S_{n}^X\in V^R\right)\leq C/n^{1/2},
\]
and similarly,
\[
\Pr\left(S_{n}^Y\in V^R\right)\leq C/n^{1/2}.
\]
So,
\[
|\Pr(S_n^X\in A)-\Pr(S_n^Y\in A)| \leq |\Pr(S_n^X\in \check{A})-\Pr(S_n^Y\in \check{A})|+ C/n^{1/2}.
\]
Therefore it suffices to consider the case where the sets $A \in \mA^{\spa}(s)$ are such that 
\begin{equation}\label{eq: boundedness property}
\sup_{w\in A}\max_{1\leq j\leq p}|w_j|\leq R.
\end{equation}
Further, let $\varepsilon=n^{-1}$, and define $\mA^{\spa}_1(s)$ as the class of all sets $A\in \mA^{\spa}(s)$ satisfying (\ref{eq: boundedness property}) and containing a ball with radius $\varepsilon$ and center at, say, $w_A$. Also define $\mA^{\spa}_2(s)$ as the class of all sets $A\subset \mA^{\spa}(s)$ satisfying (\ref{eq: boundedness property}) and containing no ball of radius $\varepsilon$. We bound $\rho_n(\mA^{\spa}_1(s))$ and $\rho_n(\mA^{\spa}_2(s))$ separately in two steps. In both cases, we rely on the following lemma,  whose proof is given after the proof of this proposition.



\begin{lemma}
\label{lem: verifying condition C.j}
Let $A$ be an $s$-sparsely convex set with a sparse representation $A=\cap_{q=1}^Q A_q$ for some $Q\leq p^s$. Assume that $A$ contains the origin, that $\sup_{w\in A}\|w\|\leq R$, and that all sets $A_q$ satisfy $-A_q\subset\mu A_q$ for some $\mu\geq 1$. Then for any $\gamma>e/8$, there exists $\epsilon_0=\epsilon_0(\gamma)>0$ such that for any $0<\epsilon<\epsilon_0$, the set $A$ admits an approximation with precision $R \epsilon$ by an $m$-generated convex set $A^m$ where
\[
m\leq Q\Big(\gamma\sqrt{\frac{\mu+1}{\epsilon}}\log\frac{1}{\epsilon}\Big)^{s^2}.
\]
Moreover, the set $A^m$ can be chosen to satisfy 
\begin{equation}\label{eq: useful sparsity condition}
\|v\|_0\leq s\text{ for all }v\in \mathcal{V}(A^m).
\end{equation}
Therefore, since $Q\leq p^s$, if $R\leq (p n)^{d_0}$ and $\mu\leq (p n)^{d_0}$ for some constant $d_0\geq 1$, then the set $A$ satisfies condition {\em (C)} with $a=1$ and $d$ depending only on $s$ and $d_0$, and the approximating $m$-generated convex set $A^m$ satisfying (\ref{eq: useful sparsity condition}).
\end{lemma}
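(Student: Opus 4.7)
My plan is to build $A^m$ block by block, exploiting the sparse decomposition of $A$ and lifting an $s$-dimensional inner polytope approximation of each block to $\R^p$. Write $A=\bigcap_{q=1}^Q A_q$ as given; each $A_q$ depends on at most $s$ coordinates $J_q\subset\{1,\dots,p\}$, so that $A_q=\tilde A_q\times\R^{p\setminus J_q}$ for some convex $\tilde A_q\subset\R^{J_q}$. Since $A$ sits in the Euclidean ball of radius $R$ in $\R^p$, the $J_q$-projection of $A$ is contained in the $R$-ball of $\R^{J_q}$, so replacing $\tilde A_q$ by $K_q:=\tilde A_q\cap\{x\in\R^{J_q}:\|x\|\leq R\}$ leaves $A$ unchanged. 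The body $K_q$ is convex, contains the origin, is contained in the $R$-ball, and inherits $-K_q\subset\mu K_q$ from $A_q$.

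The main technical step is to construct, for each $q$, an inner polytope $P_q\subset K_q$ in $\R^{J_q}$ with at most $N=\bigl(\gamma\sqrt{(\mu+1)/\epsilon}\,\log(1/\epsilon)\bigr)^{s^2}$ facets, satisfying $K_q\subset P_q+\{x\in\R^{J_q}:\|x\|\leq R\epsilon\}$. I plan to use a standard two-step recipe: first build an outer polytope $K_q^{\mathrm{out}}:=\bigcap_{v\in\mathcal N_q}\{x:v'x\leq \mS_{K_q}(v)\}$ from a fine $\eta$-net $\mathcal N_q$ on the unit sphere of $\R^{J_q}$ (so $|\mathcal N_q|\lesssim\eta^{-(s-1)}$ and $K_q\subset K_q^{\mathrm{out}}\subset K_q+\{x:\|x\|\leq C_s R\eta\}$), and then shrink to $P_q:=(1-\delta)K_q^{\mathrm{out}}$ for some $\delta>0$. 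The hypothesis $-K_q\subset\mu K_q$ is exactly what is needed to guarantee $P_q\subset K_q$: writing $y\in K_q^{\mathrm{out}}$ as $y=z+r$ with $z\in K_q$ and $\|r\|\lesssim R\eta$, convexity together with $-K_q/\mu\subset K_q$ lets one absorb $(1-\delta)r$ into $K_q$ once $\delta\gtrsim\eta(\mu+1)$. Balancing this against $\delta\asymp\epsilon$ (which is what gives the Hausdorff bound $\leq R\epsilon$) forces $\eta\lesssim\epsilon/(\mu+1)$, and a more careful Bronshtein-type net construction relaxes this to $\eta\asymp\sqrt{\epsilon/(\mu+1)}$ at the cost of a logarithmic factor, producing the claimed facet count (the exponent $s^2$ is loose but convenient).

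Given the local $P_q$'s, set $A^m:=\bigcap_{q=1}^Q(P_q\times\R^{p\setminus J_q})$. Every facet normal of $A^m$ is the lift of a facet normal of some $P_q$, padded with zeros off $J_q$, and is therefore $s$-sparse, giving (\ref{eq: useful sparsity condition}) together with the facet count $\leq QN$. The inclusion $A^m\subset A$ is immediate from $P_q\subset\tilde A_q$. To check $A\subset A^{m,R\epsilon}$, I fix $u\in\mathcal V(A^m)$, which corresponds to a facet $(q,i)$ of some $P_q$ with outward normal $v_i^{(q)}$ in $\R^{J_q}$: the constraint is active on $A^m$ so $\mS_{A^m}(u)=\mS_{P_q}(v_i^{(q)})$, while $\mS_A(u)\leq\mS_{K_q}(v_i^{(q)})\leq\mS_{P_q}(v_i^{(q)})+R\epsilon$ by the Hausdorff bound from the previous step, so $\mS_A(u)\leq\mS_{A^m}(u)+R\epsilon$, as required.

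For the closing statement, I take $\epsilon:=1/(Rn)$ so that the precision $R\epsilon=1/n=a/n$ with $a=1$; then $Q\leq p^s$ combined with $R,\mu\leq(pn)^{d_0}$ gives $QN\leq(pn)^d$ for some $d=d(s,d_0)$, verifying condition (C). The main obstacle I expect is the inner-polytope construction above: controlling the shrinkage parameter $\delta$ using only $-K_q\subset\mu K_q$, without any explicit lower bound on the inradius of $K_q$, is the delicate geometric point, and is the only place where the asymmetry parameter $\mu$ genuinely enters the facet count.
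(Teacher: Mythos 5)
Your overall architecture coincides with the paper's: approximate each block $A_q$ inside its $s_q$-dimensional coordinate subspace by a polytope whose facet normals are supported on the main components of $A_q$, lift, and intersect; the facet count $\le Q(\cdot)^{s^2}$ and the sparsity of the normals then come out exactly as you describe, and your reduction of condition (C) at the end is fine. Where you diverge is the one step that carries all the content: the construction of the low-complexity polytope for a single block. The paper does not construct it; it invokes Corollary 1.5 of Barvinok (2013), which gives a polytope $P_q$ with at most $\bigl(\gamma\sqrt{(\mu+1)/\epsilon}\log(1/\epsilon)\bigr)^{s_q}$ vertices satisfying the \emph{multiplicative} sandwich $P_q\subset A_q\subset(1+\epsilon)P_q$, combines the blocks via the distance $d_{BM}(P_1,P_2)=\inf\{\epsilon:P_2\subset(1+\epsilon)P_1\}$ (which is stable under intersections), and converts to additive precision $R\epsilon$ only once, at the very end, using $\sup_{w\in A}\|w\|\le R$.

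Your net-and-shrink replacement for Barvinok has a genuine gap, and it is precisely the point you flag in your last sentence. To get $(1-\delta)K_q^{\mathrm{out}}\subset K_q$ from $K_q^{\mathrm{out}}\subset K_q+R\eta B$, you must absorb a displacement of norm $\approx R\eta$ by a multiplicative contraction toward the origin; writing $(1-\delta)(z+r)=(1-\delta)z+\delta\cdot\frac{(1-\delta)r}{\delta}$ and using convexity requires $\frac{(1-\delta)r}{\delta}\in K_q$, i.e.\ that $K_q$ contain a ball of radius $\approx R\eta/\delta$ about the origin. The hypothesis $-K_q\subset\mu K_q$ gives no such inradius bound: a slab $[-\tau,\tau]\times[-R,R]$ has $\mu=1$ and arbitrarily small inradius $\tau$, so the admissible $\delta$ degenerates like $R\eta/\tau$ and the resulting additive error $\approx\delta R$ cannot be driven below $R\epsilon$ with a net of size depending only on $\mu$, $\epsilon$, and $s$. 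The asymmetry constant $\mu$ controls only the \emph{multiplicative} approximation, which is invariant under linear maps of the block and therefore insensitive to how flat $K_q$ is; your additive per-block target $K_q\subset P_q+R\epsilon B$ is not linearly invariant, which is why the naive net argument cannot reach it. Closing this gap is exactly the content of Barvinok's theorem (or, for the purely additive version without $\mu$, of a Bronshtein--Ivanov-type construction, which would change the form of the bound); as written, your proposal asserts rather than proves the key estimate.
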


{\bf Step 1}. 
Here we bound $\rho_n(\mA^{\spa}_1(s))$. Pick any $s$-sparsely convex set $A\in\mA^{\spa}_1(s)$ with a sparse representation $A=\cap_{q=1}^Q A_q$ for some $Q\leq p^s$. Below we verify conditions (C), (M.1$'$), (M.2$'$), and (E.1$'$) (or (E.2$'$)) for this set $A$. Consider the set $B:=A-w_A:=\{ w \in \R^p:w+w_A\in A\}$. The set $B$ contains a ball with radius $\varepsilon$ and center at the origin, satisfies the inequality $\|w\|\leq 2p^{1/2}R$ for all $w\in B$, and has a sparse representation $B=\cap_{q=1}^Q B_q$ where $B_q=A_q-w_A$. Clearly, each $B_q$ satisfies $-B_q\subset \mu B_q$ with $\mu=2p^{1/2}R/\varepsilon=2p^{3/2} n^{7/2}$. Therefore, applying Lemma \ref{lem: verifying condition C.j} to the set $B$ and noting that $A=B+w_A$ and $Q\leq p^s$, we see that the set $A$ satisfies condition (C) with $a=1$ and $d$ depending only on $s$, and an approximating $m$-generated convex set $A^m$ such that $\|v\|_0\leq s$ for all $v\in\mathcal{V}(A^m)$.

Further, since we have $\|v\|_0\leq s$ for all $v\in\mathcal{V}(A^m)$, the fact that the set $A$ satisfies condition (M.1$'$) follows immediately from (M.1$''$).

Next, we verify that the set $A$ satisfies condition (M.2$'$). For $v \in \mathcal{V}(A^m)$, let $J(v)$ be the set consisting of positions of non-zero elements of $v$, so that $\Card (J(v)) \leq s$. 
Using the inequality $(\sum_{j\in J(v)}|a_j|)^{2+k} \leq s^{1+k}\sum_{j\in J(v)}|a_j|^{2+k}$ for $a=(a_1,\dots,a_p)' \in \R^p$ (which follows from H\"{o}lder's inequality), we have 
\begin{align*}
&n^{-1}\sum_{i=1}^n\Ep[|v' X_i|^{2+k}] \leq n^{-1}\sum_{i=1}^n\Ep\Big[\Big(\sum_{j\in J(v)}|X_{ij}|\Big)^{2+k}\Big]\\
&\quad \leq s^{1+k}n^{-1}\sum_{i=1}^n\Ep\Big[\sum_{j\in J(v)}|X_{ij}|^{2+k}\Big] \leq s^{2+k}B_n^k\leq (B_n')^k 
\end{align*}
for $k=1$ or $2$, where $B_n'=s^3 B_n$, so that the set $A$ satisfies condition (M.2$'$) with $B_n$ replaced by $s^3 B_n$. 

Finally, we verify that the set $A$ satisfies condition (E.1$'$) when (E.1) is satisfied, or (E.2$'$) when (E.2) is satisfied. 
When (E.1) is satisfied, we have $\|X_{ij}\|_{\psi_1}\leq B_n$, so that $\|v' X_i\|_{\psi_1 }\leq \sum_{j\in J(v)}\|X_{ij}\|_{\psi_1} \leq s B_n$ showing that the set $A$ satisfies (E.1$'$) with $B_n$ replaced by $s B_n$. 

When (E.2) is satisfied, as $\Ep[\max_{v\in\mathcal{V}(A^m)}|v' X_i|^q] \leq s^q\Ep[\max_{1\leq j\leq p}|X_{ij}|^q]$,
the set $A$ satisfies (E.2$'$) with $B_n$ replaced by $s B_n$.

Thus, all sets $A\in\mA^{\spa}_1(s)$ satisfy conditions (C), (M.1$'$), (M.2$'$), and (E.1$'$) (or (E.2$'$)), and so applying Proposition \ref{cor: sparsely convex bodies} shows that the assertions (\ref{eq: sparsely convex sets with mgf}) and (\ref{eq: sparsely convex sets with finite moments}) hold with $\rho_n(\mA^{\spa}(s))$ replaced by $\rho_n(\mA^{\spa}_1(s))$.

\medskip

{\bf Step 2}.
Here we bound $\rho_n(\mA^{\spa}_2(s))$. Fix any $s$-sparsely convex set $A\in\mA^{\spa}_2(s)$ with a sparse representation $A=\cap_{q=1}^QA_q$ for some $Q\leq p^s$. We consider two cases separately. First, suppose that at least one $A_q$ does not contain a ball with radius $\varepsilon$. Then under condition (M.1$''$), Lemma \ref{lem: Nazarov anti-concentration2} implies that $\Pr(S_n^Y\in A_q)\leq C\varepsilon= C/n$ (since the Hilbert-Schmidt norm is equal to the square-root of the sum of squares of the eigenvalues of the matrix, under our condition (M.1$''$), the constant $C$ in the bound $C\varepsilon$ above depends only on $b$ and $s$). In addition, under conditions (M.1$''$) and (M.2), the Berry-Esseen theorem \citep[see][Theorem 1.3]{G91} implies that 
\[
\left|\Pr(S_n^X\in A_q)-\Pr(S_n^Y\in A_q)\right|\leq CB_n/n^{1/2}.
\]
Since $A \subset A_q$, both $\Pr(S_n^X\in A)$ and $\Pr(S_n^Y\in A)$ are bounded from above by $CB_n/n^{1/2}$, and so is absolute value of their difference. This completes the proof in this case.


Second, suppose that each $A_q$ contains a ball with radius $\varepsilon$ (possibly depending on $q$). Then applying Lemma \ref{lem: verifying condition C.j} to each $A_q$ separately shows that for $m\leq (p n)^d$ with $d$ depending only on $s$, we can construct an $m$-generated convex sets $A_q^m$ such that
\[
A_q^m \subset A_q \subset A_q^{m,1/n}
\] 
and $\|v\|_0\leq s$ for all $v\in \mathcal{V}(A_q^m)$. The set $A^0=\cap_{q=1}^Q A_q^{m,1/n}$ trivially satisfies condition (C) with $a=0$ and $d$ depending only on $s$. In addition, it follows from the same arguments as those used in Step 1 that the set $A^0$ satisfies conditions (M.1$'$), (M.2$'$), (E.1$'$) (if (E.1) is satisfied), and (E.2$'$) (if (E.2) is satisfied). Therefore, by applying Proposition \ref{cor: sparsely convex bodies}, we conclude that $|\Pr(S_n^X\in A^0)-\Pr(S_n^Y\in A^0)|$
is bounded from above by the quantities on the right-hand sides of (\ref{eq: best rate}) and (\ref{eq: best rate 2}) depending on whether (E.1) or (E.2) is satisfied. Also, observe that $A\subset A^0$ and that $\cap_{q=1}^Q A_q^{m,-\varepsilon}$ is empty because $\cap_{q=1}^Q A_q^{m}\subset A$ and $A$ contains no ball with radius $\varepsilon$. This implies that $\Pr(S_n^Y\in A^0)\leq C (\log^{1/2} (pn))/n$ by Lemma \ref{lem: anti-concentration} and condition (M.1$''$). Since $A\subset A^0$, both $\Pr(S_n^X\in A)$ and $\Pr(S_n^Y\in A)$ are bounded from above by the quantities on the right-hand sides of (\ref{eq: sparsely convex sets with mgf}) and (\ref{eq: sparsely convex sets with finite moments}) depending on whether (E.1) or (E.2) is satisfied, and so is their difference. This completes the proof in this case.
\qed


\medskip

Here we prove Lemma \ref{lem: verifying condition C.j} used in the proof of Proposition \ref{cor: sparsely convex sets}.

\begin{proof}[Proof of Lemma \ref{lem: verifying condition C.j}]
For convex sets $P_1$ and $P_2$ containing the origin and such that $P_1\subset P_2$, define
\[
d_{BM}(P_1,P_2):=\inf\{ \epsilon > 0: P_2\subset (1+\epsilon)P_1\}.
\]
It is immediate to verify that the function $d_{BM}$ has the following useful property: for any convex sets $P_1$, $P_2$, $P_3$, and $P_4$ containing the origin and such that $P_1\subset P_2$ and $P_3\subset P_4$,
\begin{equation}\label{eq: useful property of BM}
d_{BM}(P_1\cap P_3,P_2\cap P_4)\leq d_{BM}(P_1, P_2)\vee d_{BM}(P_3, P_4).
\end{equation}

Let $A=\cap_{q=1}^Q A_q$ be a sparse representation of $A$ as appeared in the statement of the lemma. Fix any $A_q$. By assumption, the indicator function $w\mapsto I(w\in A_q)$ depends only on $s_q\leq s$ elements of its argument $w=(w_1,\dots,w_p)$. 
Since $A$ contains the origin, $A_q$ contains the origin as well. Therefore, applying Corollary 1.5 in \cite{B13} as if $A_q$ were a set in $\R^{s_q}$ shows that one can construct a polytope $P_q\subset \R^p$ with at most $(\gamma((\mu+1)/\epsilon)^{1/2}\log(1/\epsilon))^{s_q}$ vertices such that
\[
P_q\subset A_q\subset (1+\epsilon)P_q
\]
and such that for all $v\in\mathcal{V}(P_q)$, non-zero elements of $v$ correspond to some of the main components of $A_q$.
Since we need at most $s_q$ vertices to form a facet of the polytope $P_q$, the polytope $P_q$ has
\begin{equation}\label{eq: inequality for number of faces}
m_q\leq \left(\gamma\sqrt{\frac{\mu+1}{\epsilon}}\log\frac{1}{\epsilon}\right)^{s_q^2}\leq \left(\gamma\sqrt{\frac{\mu+1}{\epsilon}}\log\frac{1}{\epsilon}\right)^{s^2}
\end{equation}
facets. Now observe that $P_q$ is an $m_q$-generated convex set. Thus, we have constructed an $m_q$-generated convex set $P_q$ such that
$P_q\subset A_q\subset (1+\epsilon)P_q$ and all vectors in $\mathcal{V}(P_q)$ having at most $s$ non-zero elements. Hence $d_{BM}(P_q,A_q)\leq\epsilon$, 
which, together with (\ref{eq: useful property of BM}), implies that
\[
d_{BM}(\cap_{q=1}^Q P_q,\cap_{q=1}^Q A_q)\leq \epsilon.
\]
Therefore, defining $A^m=\cap_{q=1}^Q P_q$, we obtain from $A=\cap_{q=1}^Q A_q$ that
\[
A^m\subset A\subset (1+\epsilon) A^m\subset A^{m,R\epsilon},
\]
where the last assertion follows from the assumption that $\sup_{w\in A}\|w\|\leq R$. Since $A^m$ is an $m$-generated convex set with $m\leq \sum_{q=1}^Q m_q$, the first claim of the lemma now follows from (\ref{eq: inequality for number of faces}). The second claim (\ref{eq: useful sparsity condition}) holds by construction of $A^m$, and the final claim is trivial. 
\end{proof}

\section{Proofs for Section \ref{sec: bootstrap CLT}}

\subsection{Maximal inequalities}

Here we collect some useful maximal inequalities that will be used in the proofs for Section \ref{sec: bootstrap CLT}.

\begin{lemma}
\label{lem: maximal ineq}
Let $X_{1},\dots,X_{n}$ be independent centered random vectors in $\R^{p}$ with $p \geq 2$. Define $Z := \max_{1 \leq j \leq p} | \sum_{i=1}^{n} X_{ij}|, \ M := \max_{1 \leq i \leq n} \max_{1 \leq j \leq p} | X_{ij} |$ and $\sigma^{2} := \max_{1 \leq j \leq p} \sum_{i=1}^{n} \Ep [ X_{ij}^{2} ]$. Then
\begin{equation*}
\Ep [Z]  \leq K (\sigma \sqrt{\log p} + \sqrt{\Ep [ M^{2} ]} \log p).
\end{equation*}
where $K$ is a universal constant.
\end{lemma}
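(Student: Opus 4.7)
The plan is to prove this via symmetrization combined with a bootstrapping argument on a single moment quantity. First I would apply the standard symmetrization inequality to pass from $Z$ to $Z_\varepsilon := \max_{1\le j \le p}|\sum_i \varepsilon_i X_{ij}|$ with Rademacher multipliers $\varepsilon_1,\dots,\varepsilon_n$ independent of the $X_i$, picking up only a factor of $2$. Conditional on $X_1^n$, each $\sum_i \varepsilon_i X_{ij}$ is sub-Gaussian with variance proxy $\sum_i X_{ij}^2$ by Hoeffding, so the sub-Gaussian maximal inequality (applied to the $2p$ random variables $\pm \sum_i \varepsilon_i X_{ij}$) gives
\[
\Ep_\varepsilon[Z_\varepsilon] \lesssim \sqrt{\log p}\, \max_{1\le j\le p}\sqrt{\textstyle\sum_{i}X_{ij}^{2}}.
\]
Taking expectation and using Jensen, this yields $\Ep[Z] \lesssim \sqrt{\log p}\,\sqrt{U}$, where $U := \Ep\bigl[\max_{1\le j\le p}\sum_i X_{ij}^2\bigr]$. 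So the whole problem reduces to bounding $U$.

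Next I would bound $U$ by splitting
\[
\max_{1\le j\le p}\textstyle\sum_i X_{ij}^2 \le \sigma^2 + \max_{1\le j\le p}\bigl|\sum_i (X_{ij}^2 - \Ep X_{ij}^2)\bigr|.
\]
To handle the deviation term, symmetrize once more and observe that conditionally on $X_1^n$ the variables $\varepsilon_i X_{ij}^2$ are bounded by $M^2$, so Hoeffding applied coordinatewise gives
\[
\Ep\Bigl[\max_{j}\bigl|\textstyle\sum_i \varepsilon_i X_{ij}^2\bigr|\Bigr] \lesssim \sqrt{\log p}\,\Ep\Bigl[M\sqrt{\max_{j}\textstyle\sum_i X_{ij}^2}\Bigr] \le \sqrt{\log p\cdot \Ep[M^2]}\cdot\sqrt{U},
\]
by Cauchy--Schwarz. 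Putting this together yields the self-bounding inequality
\[
U \le \sigma^{2} + C\sqrt{\log p\cdot \Ep[M^{2}]}\cdot \sqrt{U}.
\]

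Solving this quadratic in $\sqrt{U}$ gives $U \lesssim \sigma^{2} + \log p \cdot \Ep[M^{2}]$, and plugging back into $\Ep[Z]\lesssim \sqrt{\log p}\,\sqrt{U}$ delivers exactly the claimed bound $\Ep[Z]\le K\bigl(\sigma\sqrt{\log p} + \sqrt{\Ep[M^{2}]}\,\log p\bigr)$. The only step that requires any thought is the second symmetrization on the squares: the function $x\mapsto x^{2}$ is not Lipschitz, which rules out a direct contraction argument, so I conditioned on $X_1^n$ and used the trivial bound $|X_{ij}^2|\le M |X_{ij}|$ to keep one factor linear before applying Cauchy--Schwarz. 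This is the only place where care is needed; everything else is routine. Note that the lemma is stated with no dependence on $n$ beyond what is hidden in $\sigma^2$ and $\Ep[M^2]$, which is consistent with the proof scheme above.
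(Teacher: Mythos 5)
Your argument is correct. Note that the paper itself gives no proof here --- it simply cites Lemma 8 of \cite{CCK12c} --- so the relevant comparison is with that reference. The first half of your proof (symmetrization, conditional Hoeffding, the sub-Gaussian maximal inequality over the $2p$ variables $\pm\sum_i\varepsilon_iX_{ij}$, and Jensen to reduce everything to $U=\Ep[\max_j\sum_iX_{ij}^2]$) is exactly the standard route taken there. Where you genuinely diverge is in bounding $U$: the cited proof invokes the maximal inequality for sums of nonnegative random variables (Lemma \ref{lem: maximal ineq nonnegative} in this paper's appendix, applied to $X_{ij}^2$), which rests on truncation and the Hoffmann--J\o rgensen inequality; you instead symmetrize a second time, use $\sum_iX_{ij}^4\leq M^2\sum_iX_{ij}^2$ to keep one factor of the unknown quantity, and close a self-bounding quadratic $U\leq\sigma^2+C\sqrt{\log p\cdot\Ep[M^2]}\sqrt{U}$. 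Your version is more self-contained and elementary (no Hoffmann--J\o rgensen), at the cost of only working for the squares rather than giving the general nonnegative-sum inequality that the paper also needs elsewhere (e.g.\ in the proof of Proposition \ref{cor: EB corollary}). Two small points you should make explicit in a written version: the quadratic in $\sqrt{U}$ can only be ``solved'' once you know $U<\infty$, which follows from $U\leq p\sigma^2$ when $\sigma^2<\infty$ (and the claim is vacuous otherwise); and the constant from $\log(2p)\leq 3\log p$ uses $p\geq 2$.
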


\begin{proof}
See Lemma 8 in \cite{CCK12c}.
\end{proof}

\begin{lemma}
\label{lem: fuk-nagaev}
Assume the setting of Lemma \ref{lem: maximal ineq}. 
(i) For every $\eta > 0, \beta \in (0,1]$ and $t>0$,
\[
\Pr \{  Z \geq (1+\eta) \Ep [ Z ] + t \} \leq \exp \{ -t^{2}/(3\sigma^{2}) \} +3\exp \{ -( t/(K\| M \|_{\psi_{\beta}}))^{\beta} \},
\]
where $K = K(\eta,\beta)$ is a constant depending only on $\eta, \beta$. 

(ii) For every $\eta >0, s \geq 1$ and $t>0$,
\[
\Pr \{ Z \geq (1+\eta) \Ep[Z] + t \} \leq \exp\{ -t^2/(3\sigma^2)\}+K' \Ep [ M^{s}]/t^s, 
\]
where $K' =K'(\eta,s)$ is a constant depending only on $\eta,s$. 
\end{lemma}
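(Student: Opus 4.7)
The plan is a standard truncation-plus-concentration argument: apply a Bousquet-type Talagrand inequality to a truncated version of $Z$, and handle the residual via an Orlicz or moment tail bound on $M$. Parts (i) and (ii) differ only in the final estimate of $\Pr(M > u)$ for the truncation level $u$ to be chosen.

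For a threshold $u > 0$ to be chosen, decompose $X_{ij} = \tilde{A}_{ij} + \tilde{B}_{ij}$, where
$\tilde{A}_{ij} := X_{ij}\mathbf{1}\{|X_{ij}| \le u\} - \Ep[X_{ij}\mathbf{1}\{|X_{ij}| \le u\}]$
and
$\tilde{B}_{ij} := X_{ij}\mathbf{1}\{|X_{ij}| > u\} - \Ep[X_{ij}\mathbf{1}\{|X_{ij}| > u\}]$
are both centered, with $|\tilde{A}_{ij}| \le 2u$. Let $Z' := \max_{j}|\sum_i \tilde{A}_{ij}|$ and $Z'' := \max_{j}|\sum_i \tilde{B}_{ij}|$, so that $Z \le Z' + Z''$. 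On the event $\{M \le u\}$ the $\tilde{B}_{ij}$ reduce to their (deterministic) negative means, so $Z'' \le D_u := \max_j \sum_i \Ep[|X_{ij}|\mathbf{1}\{|X_{ij}|>u\}]$ there. More generally, $\Ep[Z'']$ can be controlled by symmetrization and the assumed integrability of $M$, and made at most $(\eta/2)\Ep[Z]$ by taking $u$ large; together with $\Ep[Z'] \le \Ep[Z] + \Ep[Z'']$ this yields $\Ep[Z'] \le (1+\eta/2)\Ep[Z]$.

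Since $|\tilde{A}_{ij}| \le 2u$ and $\sum_i \Ep[\tilde{A}_{ij}^2] \le \sigma^2$, Bousquet's form of Talagrand's inequality yields, for every $r > 0$,
\begin{equation*}
\Pr\!\left(Z' \ge \Ep[Z'] + r\right) \le \exp\!\left(-\frac{r^2}{C_1(\sigma^2 + u\,\Ep[Z']) + C_2 u r}\right)
\end{equation*}
with absolute constants $C_1, C_2$. Restricting to the sub-Gaussian regime $u(r + \Ep[Z']) \le c_\eta \sigma^2$ for a sufficiently small $c_\eta > 0$, the denominator is at most $3\sigma^2$, and the choice $r = t/2$ together with $D_u \le t/4$ gives
\begin{equation*}
\Pr\bigl(\{Z \ge (1+\eta)\Ep[Z] + t\} \cap \{M \le u\}\bigr) \le \exp(-t^2/(3\sigma^2)).
\end{equation*}
Outside $\{M \le u\}$, I would simply bound by $\Pr(M > u)$: Markov applied to $\psi_\beta(M/\|M\|_{\psi_\beta})$ gives $\Pr(M > u) \le 2\exp(-(u/\|M\|_{\psi_\beta})^\beta)$ in case (i), while Markov on $M^s$ gives $\Pr(M > u) \le \Ep[M^s]/u^s$ in case (ii).

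Finally, I would couple $u$ to $t$: for part (ii) set $u = \kappa_{\eta, s}\,t$ with $\kappa_{\eta, s}$ small enough that the sub-Gaussian regime conditions and $D_u \le t/4$ both hold, giving the $\Ep[M^s]/t^s$ term up to a factor absorbed into $K'(\eta, s)$; for part (i) take $u = t/K(\eta,\beta)$ chosen so that $(u/\|M\|_{\psi_\beta})^\beta$ delivers exactly the stated exponent $(t/(K\|M\|_{\psi_\beta}))^\beta$ and the other conditions hold, using the $\psi_\beta$-integrability of $M$ to bound $D_u$. The main obstacle is the joint bookkeeping: verifying simultaneously (a) $\Ep[Z'] \le (1+\eta/2)\Ep[Z]$, (b) $D_u \le t/4$, (c) that Bousquet's denominator collapses to $3\sigma^2$ in the chosen regime, and (d) that the two tail terms add to the stated form, with an auxiliary split at $t \gtrsim \sigma$ to handle the regime where the sub-Gaussian term is already trivial. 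All four checks reduce to elementary constant-chasing once $u$ has been tuned to $t$.
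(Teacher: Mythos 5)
There is a genuine gap, and it sits exactly where you flagged the ``main obstacle.'' Coupling the truncation level to $t$ (setting $u\propto t$) is incompatible with keeping Bousquet's denominator at $3\sigma^2$: the denominator contains a term of order $u\,r\asymp t^2$, so the ``sub-Gaussian regime'' condition you impose forces $t\lesssim\sigma$, and for larger $t$ the bounded-part bound degrades to $\exp\{-c\,r/u\}=\exp\{-c'\}$, a constant independent of $t$. Your proposed ``auxiliary split at $t\gtrsim\sigma$'' cannot be completed, because in that regime your only $t$-decaying contribution is $\Pr(M>u)$, and that is not enough. Concretely, take $p=1$ and $X_i$ i.i.d.\ centered Bernoulli with success probability $1/n$, so $M\leq 1$, $\sigma^2\asymp 1$, and $\Ep[Z]\asymp 1$: for large $t$ your choices give $\Pr(M>u)=0$ and $D_u=0$, so your argument would conclude $\Pr\{Z\geq(1+\eta)\Ep[Z]+t\}\leq\exp\{-t^2/(3\sigma^2)\}$, which is false since the left side has only a Poissonian tail $\exp\{-ct\log t\}$. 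The lemma itself survives because its second term (with $\|M\|_{\psi_\beta}\asymp 1$, resp.\ $\Ep[M^s]/t^s$) absorbs this tail --- but your method never produces a second term of that strength: bounding the residual contribution by $\Pr(M>u)$ throws away the tail of the residual \emph{sum}, which is precisely what must carry the bound when $t\gg\sigma$.

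The repair is the architecture of Adamczak's proof, which the paper follows verbatim in its self-contained proof of the analogous Lemma \ref{lem: deviation ineq nonnegative} (the paper proves Lemma \ref{lem: fuk-nagaev} itself only by citation to Theorem 4 of \cite{A08a} and Theorem 2 of \cite{A08}). Truncate at a level tied to $\Ep[M]$ (e.g.\ $\rho=8\Ep[M]$ or a fixed quantile of $M$), \emph{independent} of $t$; control $\Ep[W_2]$ for the unbounded remainder $W_2$ via the Hoffmann--J\o rgensen inequality; apply Talagrand--Bousquet--Massart to the bounded part, which now yields $\exp\{-t^2/(3\sigma^2)\}$ plus a term $\exp\{-ct/\rho\}$ that is dominated by the second term of the lemma; and, crucially, bound the tail of $W_2$ itself via the Ledoux--Talagrand comparison theorems, $\|W_2\|_{\psi_\beta}\lesssim\Ep[W_2]+\|M\|_{\psi_\beta}$ for (i) and $(\Ep[W_2^s])^{1/s}\lesssim\Ep[W_2]+(\Ep[M^s])^{1/s}$ for (ii). It is this last step, absent from your proposal, that generates the $t$-dependent second term.
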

\begin{proof}
See Theorem 4 in \cite{A08a} for case (i) and Theorem 2 in \cite{A08} for case (ii). See also \cite{EL08}.
\end{proof}

\begin{lemma}
\label{lem: maximal ineq nonnegative}
Let $X_1,\dots,X_n$ be independent random vectors in $\R^p$ with $p\geq 2$ such that $X_{ij} \geq 0$ for all $i=1,\dots,n$ and $j=1,\dots,p$. Define $Z := \max_{1 \leq j \leq p} \sum_{i=1}^{n} X_{ij}$ and $M := \max_{1 \leq i \leq n} \max_{1 \leq j \leq p}  X_{ij} $. Then
\[
\Ep [Z] \leq K \left( \max_{1\leq j\leq p}\Ep [ {\textstyle \sum}_{i=1}^n X_{ij} ]+\Ep [M] \log p \right),
\]
where $K$ is a universal constant. 
\end{lemma}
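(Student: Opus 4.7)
The plan is to exploit two structural features: the nonnegativity of the $X_{ij}$ (which gives the self-bounding relation $X_{ij}^{2}\le M X_{ij}$) and the $\sqrt{\log p}$-type maximal inequality for Rademacher sums conditional on the $X$'s. The direct application of Lemma~\ref{lem: maximal ineq} would produce $\sqrt{\Ep[M^{2}]}\log p$ on the right-hand side, whereas we want only $\Ep[M]\log p$, so we cannot just plug into that lemma and must work at the Rademacher level.

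First, write $S_{j}:=\sum_{i=1}^{n}X_{ij}$ and $\mu_{j}:=\Ep[S_{j}]$. Using $\max_{j}S_{j}\le \max_{j}\mu_{j}+\max_{j}(S_{j}-\mu_{j})\le \max_{j}\mu_{j}+\max_{j}|S_{j}-\mu_{j}|$, take expectations to reduce the problem to bounding
\[
\Ep\Big[\max_{1\le j\le p}\big|{\textstyle\sum}_{i=1}^{n}(X_{ij}-\Ep X_{ij})\big|\Big].
\]
Apply standard symmetrization to dominate this (up to a factor of $2$) by $\Ep[\max_{j}|\sum_{i}\varepsilon_{i}X_{ij}|]$ where $\varepsilon_{1},\dots,\varepsilon_{n}$ are independent Rademacher signs independent of $X_{1}^{n}$.

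Next, condition on $X_{1}^{n}$. Since $\sum_{i}\varepsilon_{i}X_{ij}$ is sub-Gaussian with variance proxy $\sum_{i}X_{ij}^{2}$, a standard maximal inequality for sub-Gaussian variables (union bound over $2p$ terms to remove the absolute value) yields
\[
\Ep_{\varepsilon}\Big[\max_{1\le j\le p}\big|{\textstyle\sum}_{i}\varepsilon_{i}X_{ij}\big|\Big]\le C\sqrt{\log p}\,\max_{1\le j\le p}\sqrt{{\textstyle\sum}_{i}X_{ij}^{2}}.
\]
Here comes the key use of nonnegativity: $X_{ij}\in[0,M]$ implies $X_{ij}^{2}\le M X_{ij}$, so $\sum_{i}X_{ij}^{2}\le M\sum_{i}X_{ij}\le MZ$ for every $j$. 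Thus the right-hand side is bounded by $C\sqrt{\log p}\,\sqrt{MZ}$. Taking outer expectation and applying Cauchy--Schwarz gives
\[
\Ep\Big[\max_{1\le j\le p}\big|{\textstyle\sum}_{i}\varepsilon_{i}X_{ij}\big|\Big]\le C\sqrt{\log p}\,\Ep\sqrt{MZ}\le C\sqrt{\log p\cdot\Ep[M]\cdot\Ep[Z]}.
\]

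Putting the pieces together, we obtain a quadratic-type inequality
\[
\Ep[Z]\le \max_{1\le j\le p}\mu_{j}+2C\sqrt{\log p\cdot\Ep[M]}\,\sqrt{\Ep[Z]}.
\]
The final step is the self-bounding solve: treating this as $a\le b+c\sqrt{a}$ with $a=\Ep[Z]$, $b=\max_{j}\mu_{j}$, $c=2C\sqrt{\log p\cdot\Ep[M]}$, the quadratic formula in $\sqrt{a}$ gives $\sqrt{a}\le c+\sqrt{b}$, hence $\Ep[Z]\le 2\max_{j}\mu_{j}+8C^{2}(\log p)\Ep[M]$, which is the desired bound with a universal $K$. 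The main delicate point is to make sure the self-bounding step indeed closes with $\Ep[M]$ rather than $\sqrt{\Ep[M^{2}]}$; this is precisely why we could not route through Lemma~\ref{lem: maximal ineq} directly and had to carry out the symmetrization/contraction argument by hand, using the nonnegativity-based inequality $X_{ij}^{2}\le M X_{ij}$ inside the conditional sub-Gaussian bound.
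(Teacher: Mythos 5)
Your proof is correct: each step (symmetrization, the conditional sub-Gaussian maximal inequality over the $2p$ signed coordinates, the nonnegativity bound $\sum_{i}X_{ij}^{2}\leq MZ$, Cauchy--Schwarz, and the quadratic solve) is valid, and the a priori finiteness of $\Ep[Z]$ needed to solve the quadratic inequality follows from $Z\leq\sum_{j=1}^{p}\sum_{i=1}^{n}X_{ij}$ whenever the right-hand side of the lemma is finite. The paper does not prove this lemma but simply cites Lemma 9 of \cite{CCK12c}, and your argument is essentially the standard proof given there.
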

\begin{proof}
See Lemma 9 in \cite{CCK12c}.
\end{proof}

\begin{lemma}
\label{lem: deviation ineq nonnegative}
Assume the setting of Lemma \ref{lem: maximal ineq nonnegative}.
(i) For every $\eta > 0, \beta \in (0,1]$ and $t > 0$, 
\[
\Pr \{ Z \geq (1+\eta) \Ep[Z] + t \} \leq 3 \exp \{ -( t/(K\| M \|_{\psi_{\beta}}))^{\beta} \},
\]
where $K = K(\eta,\beta)$ is a constant depending only on $\eta, \beta$. 
(ii) For every $\eta > 0, s \geq 1$ and $t > 0$, 
\[
\Pr \{ Z  \geq (1+\eta) \Ep[Z] + t \} \leq K'  \Ep[M^{s}]/t^{s},
\]
where $K' = K'(\eta,s)$ is a constant depending only on $\eta,s$. 
\end{lemma}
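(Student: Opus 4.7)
The plan is to adapt the Fuk--Nagaev truncation argument for empirical processes, combining Bousquet's concentration inequality on a truncated version of $Z$ with a direct tail bound on the envelope $M$, and using non-negativity of the $X_{ij}$ to eliminate the sub-Gaussian term present in Lemma \ref{lem: fuk-nagaev}.

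First I would fix a truncation level $u > 0$ and split $X_{ij} = X_{ij}^{\leq u} + X_{ij}^{> u}$ with $X_{ij}^{\leq u} := X_{ij}\,1\{X_{ij}\leq u\}$ and $Z^{\leq u} := \max_{1\leq j\leq p}\sum_i X_{ij}^{\leq u}$. On the event $\{M\leq u\}$ one has $Z = Z^{\leq u}$, so
\[
\Pr\{Z \geq (1+\eta)\Ep[Z] + t\} \leq \Pr\{Z^{\leq u} \geq (1+\eta)\Ep[Z] + t\} + \Pr(M > u).
\]
Since $X_{ij}^{\leq u} \in [0,u]$, Bousquet's version of Talagrand's inequality for the supremum of an empirical process gives
\[
\Pr\bigl\{Z^{\leq u} \geq \Ep[Z^{\leq u}] + r\bigr\} \leq \exp\Bigl(-\frac{r^2}{2V + Cu r}\Bigr),\qquad V := \max_j \sum_i \Ep[(X_{ij}^{\leq u})^2].
\]
Non-negativity is crucial here: since $(X_{ij}^{\leq u})^2 \leq u\cdot X_{ij}^{\leq u}$ pointwise, one has $V \leq u\cdot \Ep[Z^{\leq u}] \leq u\cdot \Ep[Z]$, so the Bernstein denominator collapses to $O(ur)$ and the bound becomes essentially sub-exponential $\exp(-c_\eta r/u)$; the slack $\eta\Ep[Z]$ absorbs the unwanted term $\Ep[Z^{\leq u}]\leq \Ep[Z]$ and the residual contribution of $V/u \lesssim \Ep[Z]$.

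For part~(ii), taking $u = ct$ for a small constant $c = c(\eta,s)$ and applying Markov $\Pr(M > u) \leq \Ep[M^s]/u^s$ makes the exponential contribution negligible and yields the advertised polynomial bound $K'\Ep[M^s]/t^s$. For part~(i), the Orlicz tail $\Pr(M > u) \leq 2\exp\{-(u/\|M\|_{\psi_\beta})^\beta\}$ is balanced against the sub-exponential estimate $\exp(-c_\eta t/u)$ coming from the truncated piece.

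The main obstacle is part~(i) when $\beta < 1$: a single-level truncation only produces the sub-optimal exponent $(t/\|M\|_{\psi_\beta})^{\beta/(1+\beta)}$, because the two natural balance conditions $t/u \gtrsim (t/\|M\|_{\psi_\beta})^\beta$ and $u \gtrsim t$ are incompatible. To recover the full $\psi_\beta$-tail one passes through Adamczak's $L^q$-moment refinement of Bousquet's inequality: one first shows $\|Z - \Ep Z\|_{L^q} \lesssim \sqrt{qV} + q^{1/\beta}\|M\|_{\psi_\beta}$ for every $q \geq 1$, then invokes the non-negativity bound $V \leq u\,\Ep[Z]$ at $q = 2$ to close an implicit inequality for $V$ and absorb it into $\eta\Ep[Z]$, and finally applies Markov at the optimal exponent $q \asymp (t/\|M\|_{\psi_\beta})^\beta$. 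Tracking constants (which depend only on $\eta$ and $\beta$) and summing the contributions delivers the stated bound $3\exp\{-(t/(K\|M\|_{\psi_\beta}))^\beta\}$.
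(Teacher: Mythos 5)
There is a genuine gap, and it is concentrated in the decomposition $\Pr\{Z\geq a\}\leq \Pr\{Z^{\leq u}\geq a\}+\Pr(M>u)$. For part (ii) this cannot deliver the stated bound: with your choice $u=ct$ the contribution of the truncated part is $\exp(-c_\eta t/u)=\exp(-c_\eta/c)$, a \emph{constant} that does not decay as $t\to\infty$ and hence cannot be dominated by $K'\Ep[M^s]/t^s$; with $u$ independent of $t$ the term $\Pr(M>u)$ does not decay in $t$ at all; and intermediate choices of $u$ lose logarithmic factors. The paper avoids this by never using the crude event $\{M>u\}$. It truncates at the \emph{fixed} level $\rho=8\Ep[M]$ (assuming WLOG $(1+\eta)8\Ep[M]\leq t/4$), writes $Z\leq W_1+W_2$ with $W_2=\max_j\sum_i(X_{ij}-Y_{ij})$ the maximum of the sums of exceedances, and bounds $\Pr(W_2\geq t/4)$ rather than $\Pr(M>\rho)$. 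Three ingredients then close the argument: the Hoffmann--J{\o}rgensen inequality gives $\Ep[W_2]\leq 8\Ep[M]\leq t/(4(1+\eta))$, which transfers the centering from $\Ep[Z]$ to $\Ep[W_1]$; the Massart/Talagrand bound for nonnegative summands bounded by $\rho$ (the paper's Lemma \ref{lem: talagrand nonnegative}, which is the correct substitute for your appeal to Bousquet, whose standard form is for centered classes) yields $e^{-ct/\rho}$, which is $\lesssim_s(\rho/t)^s\leq 8^s\Ep[M^s]/t^s$ by $e^{-x}\lesssim_s x^{-s}$ and Jensen; and Theorems 6.20/6.21 of Ledoux--Talagrand convert $\Ep[M^s]$ (resp.\ $\|M\|_{\psi_\beta}$) into a bound on $(\Ep[W_2^s])^{1/s}$ (resp.\ $\|W_2\|_{\psi_\beta}$), so that $\Pr(W_2\geq t/4)$ decays in $t$ at exactly the advertised rate. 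This last conversion is the idea missing from your proposal.

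For part (i) you correctly diagnose that a single truncation level only produces the exponent $\beta/(1+\beta)$, but the proposed repair is not a proof: the Adamczak-type moment inequality $\|Z-\Ep Z\|_{L^q}\lesssim\sqrt{qV}+q^{1/\beta}\|M\|_{\psi_\beta}$ is asserted rather than derived, and establishing it is essentially equivalent in difficulty to the lemma itself (Adamczak's own proof of the centered analogue, the paper's Lemma \ref{lem: fuk-nagaev}, already runs through Hoffmann--J{\o}rgensen). Moreover the $\sqrt{qV}$ term would reintroduce a Gaussian tail $\exp(-ct^2/V)$ that the stated bound does not contain, and the plan to absorb it by ``closing an implicit inequality for $V$ at $q=2$'' is not carried out. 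The paper's route for part (i) is the same as for part (ii): fixed truncation at $8\Ep[M]$, Massart's inequality for the bounded part, and $\|W_2\|_{\psi_\beta}\lesssim\|M\|_{\psi_\beta}$ from Ledoux--Talagrand Theorem 6.21, which yields the full exponent $\beta$ directly.
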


The proof of Lemma \ref{lem: deviation ineq nonnegative} relies on the following lemma, which follows from Theorem 10 in \cite{M00}. 
\begin{lemma}
\label{lem: talagrand nonnegative}
Assume the setting of Lemma \ref{lem: maximal ineq nonnegative}. Suppose that there exists a constant $B$ such that $M \leq B$. Then for every $\eta, t > 0$,
\[
\Pr \left \{ Z \geq (1+\eta) \Ep[Z]  + B \left (\frac{2}{3} + \frac{1}{\eta} \right ) t \right \} \leq e^{-t}. 
\]
\end{lemma}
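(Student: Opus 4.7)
The plan is to prove this Bennett--Bernstein--type concentration inequality by a three-ingredient chain: invoke the Talagrand--Massart bound for the supremum of a bounded empirical process as a black box, exploit nonnegativity to control the ``weak variance'' by $\Ep[Z]$ itself, and absorb the resulting sub-Gaussian term into the sub-exponential remainder via a single AM--GM step.

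The key input from nonnegativity is that a second-moment constraint collapses into a first-moment one. Since $X_{ij}\in[0,B]$ almost surely, $\Ep[X_{ij}^{2}]\le B\,\Ep[X_{ij}]$, and therefore the ``weak variance'' proxy
\[
v:=\max_{1\le j\le p}\sum_{i=1}^{n}\Ep[X_{ij}^{2}]\le B\max_{1\le j\le p}\Ep\!\Bigl[\sum_{i=1}^{n}X_{ij}\Bigr]\le B\,\Ep[Z],
\]
the last step by $\max_{j}\Ep[\cdot]\le \Ep[\max_{j}\cdot]=\Ep[Z]$. Theorem~10 of Massart~\cite{M00}, the Talagrand-type Bennett inequality for the supremum of a bounded empirical process, then gives, with envelope $M\le B$ and weak variance $v$, that for every $t>0$,
\[
\Pr\bigl\{Z\ge \Ep[Z]+\sqrt{2vt}+\tfrac{2}{3}Bt\bigr\}\le e^{-t}.
\]

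The final step is AM--GM absorption. From $v\le B\,\Ep[Z]$ and $2\sqrt{ab}\le a+b$ applied with $a=\eta\,\Ep[Z]$ and $b=Bt/\eta$,
\[
\sqrt{2vt}\le \sqrt{2B\,\Ep[Z]\cdot t}\le \eta\,\Ep[Z]+\tfrac{Bt}{2\eta}\le \eta\,\Ep[Z]+\tfrac{Bt}{\eta},
\]
and inserting this yields
\[
\Pr\bigl\{Z\ge (1+\eta)\Ep[Z]+B\bigl(\tfrac{2}{3}+\tfrac{1}{\eta}\bigr)t\bigr\}\le e^{-t},
\]
which is exactly the asserted inequality.

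The main obstacle is bookkeeping: one must verify that Massart's Theorem~10 provides the base Bennett-type bound with precisely the constants $\sqrt{2}$ on the variance term and $2/3$ on the envelope term (and for suprema over sums of bounded \emph{nonnegative}, not necessarily centered, summands), so that the nonnegativity hypothesis suffices to invoke it. Once that quoted form is in hand, the rest of the argument is the one-line AM--GM above, and there is room to spare in the envelope coefficient since the derivation produces $1/(2\eta)$ rather than the weaker $1/\eta$ asserted in the statement.
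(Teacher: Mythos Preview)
Your approach is correct and lands on the same final AM--GM step as the paper, but the route to the intermediate Bennett-type bound differs slightly. The paper invokes Massart's Theorem~10 in its log-MGF form: after normalizing $B=1$, one has directly
\[
\log \Ep\bigl[e^{\lambda(Z-\Ep[Z])}\bigr]\le (e^{\lambda}-\lambda-1)\,\Ep[Z],
\]
with $\Ep[Z]$ already in place of any variance term (this is where nonnegativity enters: for $X_{ij}\in[0,1]$ all moments are dominated by the first). The paper then runs the standard Chernoff--Bennett computation by hand (optimize in $\lambda$, use $q(t)\ge t^{2}/(2(1+t/3))$, solve the quadratic) to reach
\[
\Pr\bigl\{Z\ge \Ep[Z]+\sqrt{2\,\Ep[Z]\,s}+\tfrac{2}{3}s\bigr\}\le e^{-s},
\]
which is exactly your displayed bound once $v\le B\,\Ep[Z]$ is substituted. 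So your separate variance-control step $v\le B\,\Ep[Z]$ is already baked into the form of Massart's theorem the paper cites; the paper's version is more self-contained because it shows the Chernoff derivation, while yours is shorter but leans on the Bennett tail form being available off the shelf for uncentered nonnegative summands (the ``bookkeeping'' you flag). Either packaging is fine.

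One small slip: in your AM--GM step the choice that yields $\sqrt{2B\,\Ep[Z]\,t}\le \eta\,\Ep[Z]+Bt/(2\eta)$ is $a=\eta\,\Ep[Z]$, $b=Bt/(2\eta)$, not $b=Bt/\eta$; the displayed inequality is nonetheless correct.
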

\begin{proof}[Proof of Lemma \ref{lem: talagrand nonnegative}]
By homogeneity, we may assume that $B=1$. Then by Theorem 10 in \cite{M00}, for every $\lambda > 0$,
\[
\log \Ep [\exp (\lambda (Z-\Ep[Z])) ] \leq \varphi (\lambda) \Ep[Z], 
\]
where $\varphi (\lambda) = e^{\lambda} - \lambda -1$. Hence by Markov's inequality, with $a = \Ep[Z]$, 
\[
\Pr \{ Z-\Ep[Z] \geq t \} \leq e^{-\lambda t + a \varphi (\lambda) }.
\]
The right-hand side is minimized at $\lambda = \log ( 1+t/a )$, at which $-\lambda t + a \varphi (\lambda)=-a q(t/a)$ where $q(t) = (1+t)\log (1+t) - t$. It is routine to verify that $q(t) \geq t^{2}/(2(1+t/3))$, so that 
\[
\Pr \{ Z-\Ep[Z] \geq t \} \leq e^{-\frac{t^{2}}{2(a+t/3)}}. 
\]
Solving $t^{2}/(2(a+t/3)) = s$
gives $t = s/3 + \sqrt{s^{2}/9+2as} \leq 2s/3 + \sqrt{2as}$. Therefore, we have 
\[
\Pr \{ Z \geq \Ep[Z] + \sqrt{2as} + 2 s/ 3 \} \leq e^{-s}. 
\]
The conclusion follows from the inequality $\sqrt{2as} \leq \eta a + \eta^{-1}s$. 
\end{proof}

\begin{proof}[Proof of Lemma \ref{lem: deviation ineq nonnegative}]
The proof is a modification of that of Theorem 4 in \cite{A08a} (or Theorem 2 in \cite{A08}). 
We begin with noting that we may assume that $(1+\eta) 8 \Ep[M] \leq t/4$, since otherwise we can make the lemma trivial by setting $K$ or $K'$ large enough. 
Take 
\[
\rho = 8 \Ep[ M ], \ Y_{ij}=
\begin{cases}
X_{ij}, & \text{if} \ \max_{1 \leq j \leq p} X_{ij} \leq \rho, \\
0, & \text{otherwise}
\end{cases}
\]
Define 
\[
W_{1}= \max_{1 \leq j \leq p} \sum_{i=1}^{n} Y_{ij}, \ 
W_{2} = \max_{1 \leq j \leq p} \sum_{i=1}^{n} (X_{ij}-Y_{ij}). 
\]
Then 
\begin{align*}
&\Pr \{ Z \geq (1+\eta) \Ep[Z] + t \} \leq \Pr \{ W_{1} \geq (1+\eta) \Ep [Z] + 3 t/4 \} + \Pr (W_{2} \geq t/4) \\
&\quad \leq \Pr \{ W_{1} \geq (1+\eta) \Ep[W_{1}] - (1+\eta) \Ep[W_{2}] + 3 t/4 \} + \Pr (W_{2} \geq t/4).
\end{align*}
Observe that 
\[
\Pr \left \{ \max_{1 \leq m \leq n} \max_{1 \leq j \leq p} \sum_{i=1}^{m}(X_{ij}-Y_{ij}) > 0 \right \} \leq \Pr (M > \rho) \leq 1/8,
\]
so that by the Hoffmann-J\o rgensen inequality \citep[see][Proposition 6.8]{LT91}, we have 
\[
\Ep[ W_{2} ] \leq 8 \Ep[M] \leq t/(4(1+\eta)). 
\]
Hence 
\[
\Pr \{ Z \geq (1+\eta) \Ep[Z] + t \} \leq \Pr \{ W_{1} \geq (1+\eta) \Ep[W_{1}] + t/2 \} + \Pr (W_{2} \geq t/4).
\]
By Lemma \ref{lem: talagrand nonnegative}, the first term on the right-hand side is bounded by $e^{-c t/\rho}$ where $c$ depends only on $\eta$. We bound the second term separately in cases (i) and (ii). Below $C_{1},C_{2},\dots$ are constants that depend only on $\eta, \beta, s$. 

Case (i).  By Theorem 6.21 in \cite{LT91} (note that a version of their theorem applies to nonnegative random vectors) and the fact that $\Ep[ W_{2}] \leq 8\Ep[M]$, 
\[
\| W_{2} \|_{\psi_{\beta}} \leq C_{1}(\Ep[W_{2}] + \| M \|_{\psi_{\beta}}) \leq C_{2} \| M \|_{\psi_{\beta}}, 
\]
which implies that $\Pr (W_{2} \geq t/4) \leq 2\exp \{ - (t/(C_{3}\| M \|_{\psi_{\beta}}))^{\beta} \}$. Since $\rho \leq C_{4} \| M \|_{\psi_{\beta}}$, we conclude that 
\[
e^{-ct/\rho} + \Pr (W_{2} \geq t/4) \leq 3\exp \{ - (t/(C_{5}\| M \|_{\psi_{\beta}}))^{\beta} \}.
\]

Case (ii). By Theorem 6.20 in \cite{LT91} (note that a version of their theorem applies to nonnegative random vectors) and the fact that $\Ep[ W_{2}] \leq 8\Ep[M]$, 
\[
(\Ep[W_{2}^{s}])^{1/s} \leq C_{6} (\Ep[W_{2}] + (\Ep[M^{s}])^{1/s}) \leq C_{7} (\Ep[M^{s}])^{1/s}. 
\]
The conclusion follows from Markov's inequality together with the simple fact that $e^{-t}/t^{-s} \to 0$ as $t \to \infty$. 
\end{proof}

\subsection{Proofs for Section \ref{sec: bootstrap CLT}}

\subsection*{Proof of Theorem \ref{thm: multiplier bootstrap CLT}}
In this proof, $C$ is a positive constant that depends only on $a$, $b$, and $d$ but its value may change at each appearance. Fix any $A\in\mA\subset \mA^{\si}(a,d)$. Let $A^m=A^m(A)$ be an approximating $m$-generated convex set as in (C). By assumption, $A^m\subset A\subset A^{m,\epsilon}$. Let
\begin{align*}
\overline{\rho}:= \max \Big \{ & |\Pr(S_n^{eX}\in A^{m} \mid X_1^n)-\Pr(S_n^Y\in A^{m})|, \\
&\quad |\Pr(S_n^{eX}\in A^{m,\epsilon} \mid X_1^n)-\Pr(S_n^Y\in A^{m,\epsilon})| \Big \}.
\end{align*}
As in the proof of Proposition \ref{cor: sparsely convex bodies}, we have 
\begin{align*}
&|\Pr(S_n^{eX}\in A\mid X_1^n)-\Pr(S_n^Y\in A)|  \\
&\quad \leq C\epsilon\log^{1/2} (p n)+\overline{\rho}
\leq Cn^{-1}\log^{1/2} (p n) + \overline{\rho}, 
\end{align*}
so that the problem reduces to proving 
that under (M.1), the inequality
\begin{equation}\label{eq: to prove 4-1}
\rho_n^{MB}(\mA^{\re})\leq C\overline{\Delta}_n^{1/3}\log^{2/3} p
\end{equation}
holds on the event $\Delta_{n,r}\leq\overline{\Delta}_n$, where $\Delta_{n,r}:=\max_{1\leq j,k\leq p}|\hat{\Sigma}_{jk}-\Sigma_{jk}|$ 
with $\hat{\Sigma}_{jk}$ and $\Sigma_{jk}$ denoting the $(j,k)$-th elements $\hat{\Sigma}$ and $\Sigma$, respectively.

To this end, we first show that
\begin{equation}\label{eq: varrho bootstrap}
\varrho_n^{MB}:=\sup_{y\in\R^p}| \Pr(S_n^{e X}\leq y \mid X_{1}^{n})-\Pr(S_n^Y\leq y) |\leq C\Delta_{n,r}^{1/3}\log^{2/3} p.
\end{equation}
To show (\ref{eq: varrho bootstrap}), fix any $y=(y_1,\dots,y_p)'\in \R^p$. As in the proof of Lemma \ref{lem: induction lemma}, for $\beta>0$, define
\[
F_\beta(w):=\beta^{-1}\log \left({\textstyle \sum}_{j=1}^p\exp(\beta(w_j-y_j))\right), \ w \in \R^{p}.
\]
Note that conditional on $X_{1}^{n}$, $S_n^{e X}$ is a centered Gaussian random vector with covariance matrix $\hat{\Sigma}$. 
Then a small modification of the proof of Theorem 1 in \cite{CCK12c} implies that for every  $g\in C^2(\R)$ with $\|g'\|_{\infty}\vee\|g''\|_{\infty}<\infty$, we have
\[
|\Ep[g(F_\beta(S_n^{e X})) \mid X_{1}^{n}]-\Ep[g(F_\beta(S_n^Y))]|\leq (\|g''\|_\infty/2+\beta\|g'\|_\infty)\Delta_{n,r}.
\]
Hence, as in Step 2 of the proof of Lemma \ref{lem: induction lemma}, we obtain with $\phi=\beta/\log p$ that
\begin{align*}
&|\Pr(S_n^{e X}\leq y-\phi^{-1} \mid X_{1}^{n})-\Pr(S_n^Y\leq y-\phi^{-1})| \\
&\quad \leq C\{ \phi^{-1}\log^{1/2}p+(\phi^2+\beta\phi)\Delta_{n,r}\}.
\end{align*}
Substituting $\beta=\phi\log p$, optimizing the resulting expression with respect to $\phi$, and noting that $y \in \R^{p}$ is arbitrary lead to (\ref{eq: varrho bootstrap}). 
Finally (\ref{eq: to prove 4-1}) follows from the fact that the inequality  $\varrho_n^{MB}\leq C\overline{\Delta}_{n}^{1/3}\log^{2/3} p$ holds on the event $\Delta_{n,r}\leq \overline{\Delta}_n$, and applying the same argument as that used in the proof of Corollary \ref{cor: induction lemma}.  
\qed

\subsection*{Proof of Proposition \ref{cor: MB corollary}}
In this proof, $c$ and $C$ are positive constants that depend only on $a,b,d$, and $s$ under (E.1), and on $a,b,d,s$, and $q$ under (E.2); their values may vary from place to place. For brevity of notation, we implicitly assume here that $i$ is varying over $\{1,\dots,n\}$, and $j$ and $k$ are varying over $\{1,\dots,p\}$. 
Finally, without loss of generality, we will assume that
\begin{equation}\label{eq: trivial bound 0}
B_n^2\log^5(p n)\log^2(1/\alpha) \leq n
\end{equation}
since otherwise the assertions are trivial. 

We shall apply Theorem \ref{thm: multiplier bootstrap CLT} to prove the proposition. Observe that since $n^{-1} \log^{1/2}(pn) \leq C D_{n}^{(1)}(\alpha)$, 
it suffices to construct an appropriate $\overline{\Delta}_n$ such that $\Pr(\Delta_n(\mA)>\overline{\Delta}_n)\leq \alpha$ and to bound  $\overline{\Delta}_n^{1/3}\log^{2/3}(p n)$. 

We begin with noting that since (S) holds for all $A\in\mA$, $\Delta_n(\mA) \leq C\Delta_{n,r}$ where $\Delta_{n,r}=\max_{1\leq j,k\leq p}|\hat{\Sigma}_{jk}-\Sigma_{jk}|$.
As $\hat{\Sigma}-\Sigma=n^{-1}\sum_{i=1}^n(X_i X_i'-\Ep[X_i X_i'])-\bar{X}\bar{X}'$, we have $\Delta_{n,r}\leq \Delta_{n,r}^{(1)}+\{ \Delta_{n,r}^{(2)} \}^2$,
where
\[
\Delta_{n,r}^{(1)}:=\max_{1\leq j,k\leq p}\left|n^{-1}\sum_{i=1}^n(X_{ij} X_{ik}-\Ep[X_{ij} X_{ik}])\right|, \ \Delta_{n,r}^{(2)}:= \max_{1\leq j\leq p}|\bar{X}_{j}|.
\]
The desired assertions then follow from the bounds on $\Delta_{n,r}^{(1)}$ and $\Delta_{n,r}^{(2)}$ derived separately for (E.1) and (E.2) cases below.

\medskip

{\bf Case (E.1)}. 
Observe that by H\"{o}lder's inequality and (M.2),
\[
\sigma_{n}^2:=\max_{j,k}\sum_{i=1}^n\Ep\left[(X_{ij}X_{ik}-\Ep[X_{ij}X_{ik}])^2\right]\leq \max_{j,k}\sum_{i=1}^n \Ep[|X_{ij}X_{ik}|^2] \leq nB_{n}^{2}.
\]
In addition, by (E.1), 
\[
\|\max_{i,j,k}|X_{ij}X_{ik}|\|_{\psi_{1/2}}=\|\max_{i,j}|X_{ij}|^2\|_{\psi_{1/2}}=\|\max_{i,j}|X_{ij}|\|_{\psi_1}^2 \leq C B_{n}^{2}\log^{2}(pn),
\]
so that for $M_{n}:=\max_{i,j,k}|X_{ij}X_{ik}-\Ep[X_{ij}X_{ik}]|$, we have 
\begin{align*}
&\|M_n\|_{\psi_{1/2}} \leq C \{ \|\max_{i,j,k}|X_{ij}X_{ik}|\|_{\psi_{1/2}}+\max_{i,j,k}\Ep[|X_{ij}X_{ik}|] \} \\
&\quad \leq C \{ B_{n}^{2}\log^{2}(pn)+B_n^2 \} \leq C B_{n}^{2}\log^{2}(pn),
\end{align*}
which also implies that $(\Ep[M_n^2])^{1/2} \leq CB_{n}^{2}\log^{2} (pn)$.
Hence by Lemma \ref{lem: maximal ineq}, we have
\begin{align*}
&\Ep[\Delta_{n,r}^{(1)}] \leq Cn^{-1}\{ \sqrt{ \sigma_{n}^2\log p} +\sqrt{\Ep[M_{n}^2]}\log p \} \\
&\quad \leq C \{ (n^{-1}B_n^2\log p)^{1/2}+ n^{-1} B_n^2\log^{3}(p n) \} \leq C \{ n^{-1}B_n^2\log (pn) \}^{1/2},
\end{align*}
where the last inequality follows from (\ref{eq: trivial bound 0}).
Applying Lemma \ref{lem: fuk-nagaev} (i) with $\beta=1/2$ and $\eta = 1$, we conclude that for every $t > 0$,
\begin{align*}
&\Pr\left( \Delta_{n,r}^{(1)} > C \{ n^{-1}B_n^2\log (pn) \}^{1/2}+t\right) \\
&\quad \leq \exp \{-nt^{2}/(3B_{n}^{2})\}+3 \exp \{ -c\sqrt{nt}/(B_n\log(pn)) \}.
\end{align*}
Choosing $t=C\{n^{-1}B_n^2\log(p n)\log^{2}(1/\alpha)\}^{1/2}$ for sufficiently large $C>0$, the right-hand side of this inequality is bounded by 
\[
\alpha/4+3\exp \{ - cC^{1/2}n^{1/4}\log^{1/2}(1/\alpha)/( B_n^{1/2}\log^{3/4}(p n)) \} \leq \alpha/2,
\]
where the last inequality follows from (\ref{eq: trivial bound 0}).
Therefore
\[
\Pr(  \{ \Delta_{n,r}^{(1)}\log^{2} (pn)\}^{1/3}> C D_{n}^{(1)}(\alpha) ) \leq \alpha/2.
\]
It is routine to verify that the same inequality holds with $\Delta_{n,r}^{(1)}$ replaced by $\{ \Delta_{n,r}^{(2)} \}^2$. 
This leads to the conclusion of the proposition under (E.1). 

\medskip

{\bf Case (E.2)}.
Define $\sigma_n^2$ and $M_n$ by the same expressions as those in the previous case; then $\sigma_{n}^{2} \leq nB_{n}^{2}$. For $M_n$, we have 
\begin{align*}
\Ep[M_n^{q/2}] &\leq C\{ \Ep[\max_{i,j,k}|X_{ij}X_{ik}|^{q/2}]+\max_{i,j,k}\left(\Ep[|X_{ij}X_{ik}|]\right)^{q/2} \} \\
&\leq C\{ \Ep[\max_{i,j,k}|X_{ij}X_{ik}|^{q/2}] \} =C\Ep[\max_{i,j} |X_{ij}|^q] \leq C n B_n^q,
\end{align*}
which also implies that $(\Ep[M_n^2])^{1/2} \leq C n^{2/q}B_n^2$.
Hence by Lemma \ref{lem: maximal ineq}, we have
\begin{align*}
&\Ep[\Delta_{n,r}^{(1)}] \leq C n^{-1} \{ \sqrt{\sigma_{n}^2\log p}+\sqrt{\Ep[M_{n}^2]}\log p \} \\
&\quad \leq C \{( n^{-1} B_n^2\log p )^{1/2}+n^{-1+2/q} B_n^2\log p\}.
\end{align*}
Applying Lemma \ref{lem: fuk-nagaev}(ii) with $s=q/2$ and $\eta=1$, we have for every $t>0$,
\begin{align*}
&\Pr \left \{ \Delta_{n,r}^{(1)} >C \{( n^{-1} B_n^2\log p )^{1/2}+n^{-1+2/q} B_n^2\log p\} +t \right \}\\
&\quad \leq \exp \{-nt^{2}/(3B_{n}^{2})\} + c t^{-q/2} n^{1-q/2}B_n^q.
\end{align*}
Choosing 
\[
t=C\{ \{ n^{-1} B_n^2(\log(p n)) \log^{2}(1/\alpha) \}^{1/2}+n^{-1+2/q} \alpha^{-2/q} B_n^2 \}
\]
for  sufficiently large $C>0$, we conclude that
\[
\Pr \Big(  \{ \Delta_{n,r}^{(1)}\log^{2} (pn)\}^{1/3}> C \{ D_{n}^{(1)}(\alpha) + D_{n,q}^{(2)}(\alpha) \} \Big) \leq \alpha/2.
\]
It is routine to verify that the same inequality holds with $\Delta_{n,r}^{(1)}$ replaced by $\{ \Delta_{n,r}^{(2)} \}^2$.  This leads to the conclusion of the proposition under (E.2). \qed

\subsection*{Proof of Corollary \ref{cor: MB corollary log-concave distributions}}
Here $C$ is understood to be a positive constant that depends only on $a,d,k_1$ and $k_2$; the value of $C$ may change from place to place.  
To prove this corollary, we apply Theorem \ref{thm: multiplier bootstrap CLT}, to which end we have to verify condition (M.1$'$) for all $A\in\mA$ and derive a suitable bound on $\Delta_{n}(\mA)$. 
Condition (M.1$'$) for all $A\in\mA$ follows from the fact that the minimum eigenvalue of $\Ep[X_{i}X_{i}']$ is bounded from below by $k_{1}$. 
By log-concavity of the distributions of $X_{i}$, we have $\| v'X_{i} \|_{\psi_{1}} \leq C (\Ep[(v'X_{i})^{2}])^{1/2} \leq C$ for all $v \in \R^{p}$ with $\| v \| = 1$ (see the proof of Corollary \ref{cor: log-concave distributions}). For all $i=1,\dots,n$, let $\check{X}_{i}$ be a random vector whose elements are given by $v'X_{i}, v \in \cup_{A \in \mA}\mathcal{V}(A^{m}(A))$; the dimension of $\check{X}_{i}$, denoted by $\check{p}$, is at most $(pn)^{d}$, and$\| \check{X}_{ij} \|_{\psi_{1}} \leq C$ for all $j=1,\dots,\check{p}$. Then $\Delta_{n}(\mA)$ coincides with $\Delta_{n,r}$ with $X_{i}$ replaced by $\check{X}_{i}$, that is,
\[
\Delta_{n}(\mA) = \max_{1 \leq j,k \leq \check{p}} \left | n^{-1}\sum_{i=1}^{n} (\check{X}_{ij} \check{X}_{ik} - \Ep[\check{X}_{ij}\check{X}_{ik}]) - \En[\check{X}_{ij}] \En[\check{X}_{ik}] \right |.
\]
Noting that $\log \check{p} \leq d \log (pn)$, by the same argument as that used in the proof of Proposition \ref{cor: MB corollary} case (E.1), we can find a constant $\overline{\Delta}_{n}$ such that $\Pr (\Delta_{n}(\mA) > \overline{\Delta}_{n}) \leq \alpha$ and 
\[
\{ \overline{\Delta}_{n}\log^{2} (pn) \}^{1/3} \leq C \{ n^{-1}(\log^{5} (pn))\log^{2} (1/\alpha) \}^{1/6}.
\]
Here without loss of generality we assume that $(\log^{5} (pn)) \log^{2} (1/\alpha) \leq n$. The desired assertion then follows. 
\qed

\subsection*{Proof of Corollary \ref{cor: MB for rectangles explicit}}
Any hyperrectangle $A\in\mA^{\re}$ satisfies conditions (C) and (S) with $a=0$, $d=1$, and $s=1$. In addition, it follows from (M.1) that any hyperrectangle $A\in\mA^{\re}$ satisfies (M.1$'$). Therefore, the asserted claims follow from Proposition \ref{cor: MB corollary}.
\qed

\subsection*{Proof of Proposition \ref{cor: MB corollary sparsely convex sets}}
In this proof, let $C$ be a positive constant depending only on $b$ and $s$ under (E.1), and on $b$, $q$, and $s$ under (E.2); the value of $C$ may change from place to place. 
Moreover, without loss of generality, we will assume that
\[
B_n^2(\log^5(p n))\log^2(1/\alpha) \leq n
\]
since otherwise the assertions are trivial. 

Let $\Delta_{n,r} := \max_{1 \leq j,k \leq p} |\hat{\Sigma}_{jk} - \Sigma_{jk}|$, and 
\[
\overline{\Delta}_{n} = 
\begin{cases}
\left ( \frac{B_{n}^{2} (\log (pn)) \log^{2} (1/\alpha)}{n} \right )^{1/2} & \text{if (E.1) is satisfied} \\
\left ( \frac{B_{n}^{2} (\log (pn)) \log^{2} (1/\alpha)}{n} \right )^{1/2} + \frac{B_{n}^{2}\log p}{\alpha^{2/q} n^{1-q/2}} & \text{if (E.2) is satisfied}.
\end{cases}
\]
Then by the proof of Proposition \ref{cor: MB corollary}, in either case where (E.1) or (E.2) is satisfied, there exists a positive constant $C_{1}$ depending only on $b,s,q$ ($C_{1}$ depends on $q$ only in the case where  (E.2) is satisfied) such that  
\[
\Pr (\Delta_{n,r} > C_{1} \overline{\Delta}_{n}) \leq \alpha/2.
\]
We may further assume that $C_{1} \overline{\Delta}_{n} \leq b/2$, since otherwise the assertions are trivial.

As in the proof of Proposition \ref{cor: sparsely convex sets}, let $R=p n^{5/2}$ and $V^{R} = \{ w \in \mathbb{R}^p: \max_{1 \leq j \leq p} |w_{j}|> R \}$. Fix any $A \in \mA^{\spa}(s)$. Then $A = \check{A} \cup (A \cap V^{R})$ for some $s$-sparsely convex set $\check{A}$ with $\sup_{w\in\check{A}}\max_{1 \leq j \leq p} |w_{j}| \leq R$. As in Proposition \ref{cor: sparsely convex sets}, $\Pr (S_{n}^{Y} \in V^{R}) \leq C/n^{1/2}$. Moreover, conditional on $X_{1}^{n}$, $S_{nj}^{eX}$ is Gaussian with mean zero and variance $\En[(X_{ij}-\bar{X}_{j})^{2}] = \hat{\Sigma}_{jj}$, so that 
\begin{align*}
&\Pr (S_{n}^{eX} \in V^{R} \mid X_{1}^{n}) = \Pr (\max_{1 \leq j \leq p}|S_{nj}^{eX}| > R \mid X_{1}^{n}) \\
&\qquad \leq \frac{\Ep[\max_{1 \leq j \leq p}|S_{nj}^{eX}| \mid X_{1}^{n}]}{R} \leq \frac{C (\log p)^{1/2} \max_{1 \leq j \leq p} \hat{\Sigma}_{jj}^{1/2}}{R},
\end{align*}
which is bounded by $C/n^{1/2}$ on the event $\Delta_{n,r} \leq C_{1} \overline{\Delta}_{n}$. Hence on the event $\Delta_{n,r} \leq C_{1} \overline{\Delta}_{n}$, 
\begin{align*}
&|\Pr (S_{n}^{eX} \in A \mid X_{1}^{n}) - \Pr (S_{n}^{Y} \in A) | \\
&\qquad\leq | \Pr(S_{n}^{eX} \in \check{A} \mid X_{1}^{n}) - \Pr(S_{n}^{Y} \in \check{A})| + C/n^{1/2},
\end{align*}
so that it suffices to consider the case where the sets $A \in \mA^{\spa}(s)$ are such that $\sup_{w\in A}\max_{1 \leq j \leq p}|w_{j}| \leq R$. 

Further, let $\varepsilon=n^{-1}$, and define the subclasses $\mA^{\spa}_1(s)$ and $\mA^{\spa}_2(s)$ of $\mA^{\spa}(s)$ as in the proof of Proposition \ref{cor: sparsely convex sets}. For all $A\in\mA^{\spa}_1(s)$, we can verify conditions (C), (S), and (M.1$'$) as in the proof of Proposition \ref{cor: sparsely convex sets} (where (S) is verified implicitly). Therefore, by Proposition \ref{cor: MB corollary} applied with $\alpha/2$ instead of $\alpha$, the bounds (\ref{eq: MB result E1 sparse}) and (\ref{eq: MB result E2 sparse}) with $\rho_n^{M B}(\mA^{\spa}(s))$ replaced by  $\rho_n^{M B}(\mA^{\spa}_1(s))$ hold with probability at least $1-\alpha/2$. Hence, it remains to bound $\rho_n^{M B}(\mA^{\spa}_2(s))$.

Fix any $A\in\mA^{\spa}_2(s)$ with a sparse representation $A=\cap_{q=1}^Q A_q$ for some $Q\leq p^s$. As in the proof of Proposition \ref{cor: sparsely convex sets}, we separately consider two cases. First, suppose that at least one of $A_{q}$ does not contain a ball of radius $\varepsilon$; then by condition (M.1$''$) and Lemma \ref{lem: Nazarov anti-concentration2}, $\Pr (S_{n}^{Y} \in A_{q}) \leq C \varepsilon$. Moreover, since $S_{n}^{eX}$ is Gaussian conditional on $X_{1}^{n}$, by condition (M.1$''$) and Lemma \ref{lem: Nazarov anti-concentration2}, we have, on the event $\Delta_{n,r} \leq C_{1} \overline{\Delta}_{n}$, $\Pr (S_{n}^{eX} \in A_{q} \mid X_{1}^{n}) \leq C\varepsilon$ since $C_1\overline{\Delta}_n\leq b/2$. Since $A \subset A_{q}$, we conclude that on the event $\Delta_{n,r} \leq C_{1} \overline{\Delta}_{n}$, $| \Pr (S_{n}^{eX} \in A \mid X_{1}^{n}) - \Pr(S_{n}^{Y} \in A)| \leq C \varepsilon = C/n$. 

Second, suppose that each $A_{q}$ contains a ball with radius $\varepsilon$. Then by applying Lemma \ref{lem: verifying condition C.j} to each $A_{q}$, for $m \leq (pn)^{d}$ with $d$ depending only on $s$, we can construct an $m$-generated convex set $A_{q}^{m}$ such that $A_{q}^{m} \subset A_{q} \subset A_{q}^{m,1/n}$ with $\| v \|_{0} \leq s$ for all $v \in \mathcal{V}(A_{q}^{m})$. Let $A_{0} = \cap_{q=1}^{Q} A_{q}^{m,1/n}$; then $A \subset A^{0}$ and $\cap_{q=1}^{Q} A_{q}^{m,-\varepsilon}$ is empty.  By the latter fact, together with condition (M.1$''$) and Lemma \ref{lem: anti-concentration}, we have $\Pr (S_{n}^{Y} \in A^{0}) \leq C(\log^{1/2}(pn))/n$. Moreover, since $S_{n}^{eX}$ is Gaussian conditional on $X_{1}^{n}$, by condition (M.1$''$) and Lemma \ref{lem: anti-concentration}, the inequality $\Pr (S_{n}^{eX} \in A^{0} \mid X_{1}^{n}) \leq C(\log^{1/2}(pn))/n$ holds on the event $\Delta_{n,r} \leq C_{1} \overline{\Delta}_{n}$ since $C_1\overline{\Delta}_n\leq b/2$.  Since $A \subset A^{0}$, we conclude that on the event $\Delta_{n,r} \leq C_{1} \overline{\Delta}_{n}$, $| \Pr (S_{n}^{eX} \in A \mid X_{1}^{n}) - \Pr(S_{n}^{Y} \in A)| \leq C(\log^{1/2} (pn))/n$. This completes the proof since $\Pr (\Delta_{n,r} > C_{1}\overline{\Delta}_{n}) \leq \alpha/2$. 
\qed

\subsection*{Proof of Theorem \ref{thm: empirical bootstrap CLT}}
By the triangle inequality, $\rho_n^{EB}(\mA^{\re})\leq \rho_n^{MB}(\mA^{\re})+\varrho_n^{EB}(\mA^{\re})$, 
where
\[
\varrho_n^{EB}(\mA^{\re}):=\sup_{A\in\mA^{\re}}|\Pr(S_n^{X^*}\in A \mid X_{1}^{n})-\Pr(S_n^{e X}\in A \mid X_{1}^{n})|.
\]
Also conditional on $X_{1}^{n}$, $X_1^*-\bar{X},\dots,X_n^*-\bar{X}$ are i.i.d. with mean zero and covariance matrix $\hat{\Sigma}$. 
In addition, conditional on $X_{1}^{n}$, $S_n^{eX} \stackrel{d}{=} \sum_{i=1}^n Y_i^*/\sqrt{n}$, 
where $Y_1^*,\dots,Y_n^*$ are i.i.d. centered Gaussian random vectors with the same covariance matrix $\hat{\Sigma}$. Hence the conclusion of the theorem follows from applying Theorem \ref{thm: main} conditional on $X_{1}^{n}$ (with $L_n$ and $M_n(\phi_n)$ in Theorem \ref{thm: main} substituted by $\hat{L}_n$ and $\hat{M}_n(\phi_n)$) to bound $\varrho_n^{EB}(A^{\re})$ on the event $\{ \En[(X_{ij}-\bar{X}_{j})^2]\geq b \ \text{for all} \ 1\leq j\leq p \} \cap \{ \hat{L}_n \leq \overline{L}_{n} \} \cap \{ \hat{M}_n(\phi_n) \leq \overline{M}_n \}$.
\qed

\subsection*{Proof of Proposition \ref{cor: EB corollary}}
Here $c,C$ are constants depending only on $b$ and $K$ under (E.1), and on $b,q$, and $K$ under (E.2); their values may change from place to place. 
We first note that, for sufficiently small $c>0$, we may assume that  
\begin{equation}
\label{eq: trivial bound}
B_n^2\log^{7}(p n) \leq cn, 
\end{equation}
since otherwise we can make the assertion of the lemma trivial by setting $C$ sufficiently large. 
To prove the proposition, we will apply Theorem \ref{thm: empirical bootstrap CLT} separately under (E.1) and under (E.2).

\medskip
\textbf{Case  (E.1)}.
With (\ref{eq: trivial bound}) in mind, by the proof of Proposition \ref{cor: MB corollary}, we see that $\Pr (\Delta_{n,r} > b/2) \leq \alpha/6$, so that with probability larger than $1-\alpha/6$, 
$b/2 \leq \En[ (X_{ij}-\bar{X}_{j})^{2}] \leq C B_{n}$ for all $j=1,\dots,p$. 
We turn to bounding  $\hat{L}_n$. Using the inequality $| a -b |^{3} \leq 4(|a|^{3}+|b|^{3})$ together with Jensen's inequality, we have 
\[
\hat{L}_n \leq 4 (\max_{1\leq j\leq p}\En[|X_{ij}|^3]+\max_{1\leq j\leq p}|\bar{X}_{j}|^3 ) \leq 8 \max_{1\leq j\leq p}\En[|X_{ij}|^3].
\]
By Lemma \ref{lem: maximal ineq nonnegative}, 
\begin{align*}
\Ep[ \max_{1\leq j\leq p}\En[|X_{ij}|^3] ] &\leq C \{ L_{n} + n^{-1} \Ep[\max_{1 \leq i \leq n} \max_{1 \leq j \leq p} |X_{ij}|^{3}] \log p\} \\
&\leq C \{ B_{n} + n^{-1} B_{n}^{3} \log^{4} (pn) \}.
\end{align*}
Note that $\| |X_{ij}|^{3} \|_{\psi_{1/3}} \leq \| X_{ij} \|_{\psi_{1}}^{3} \leq B_{n}^{3}$, so that applying  Lemma \ref{lem: deviation ineq nonnegative} (i) with $\beta = 1/3$, we have for every $t > 0$, 
\[
\Pr ( \hat{L}_n \geq C \{ B_{n} + n^{-1} B_{n}^{3} \log^{4} (pn)  + n^{-1} B_{n}^{3} t^{3} \} ) \leq 3e^{-t}. 
\]
Taking $t = \log (18/\alpha) \leq C \log (pn)$, we conclude that, with  $\overline{L}_{n}=C  B_{n}$ (recall (\ref{eq: trivial bound})), $\Pr ( \hat{L}_n > \overline{L}_{n} ) \leq \alpha/6$.

Next, consider $\hat{M}_{n,X}(\phi_n)$. Observe that 
\[
\max_{1 \leq j \leq p}|X_{ij} - \bar{X}_{j}| \leq 2 \max_{1 \leq i \leq n} \max_{1 \leq j \leq p}|X_{ij}|,
\]
so that
\[
\Pr ( \hat{M}_{n,X}(\phi_n) > 0 ) \leq \Pr ( \max_{i,j} |X_{ij}| > \sqrt{n}/(8\phi_n \log p) ).
\]
Since $\| X_{ij} \|_{\psi_{1}} \leq B_{n}$, the right-hand side is bounded by 
\[
2(pn) \exp \{ - \sqrt{n}/(8B_{n}\phi_n \log p) \}. 
\]
Observe that 
\[
B_{n}\phi_n \log p \leq Cn^{1/6} B_{n}^{2/3} \log^{1/3} (p n), 
\]
so that using (\ref{eq: trivial bound}) with $c$ being sufficiently small, we conclude that 
\begin{align*}
\Pr ( \hat{M}_{n,X}(\phi_n) > 0 ) 
&\leq 2(p n) \exp\left(-\frac{n^{1/3}}{8C B_n^{2/3} \log^{1/3}(p n)}\right)\\
&\leq 2(p n) \exp\left(-\frac{n^{1/3}}{8C B_n^{2/3} \log^{7/3}(p n)}\cdot \log^2 (p n)\right)\\
&\leq 2(p n) \exp\left(- \frac{\log(p n) \log(1/\alpha)}{8 c^{1/3} C K}\right)\leq \alpha/6.
\end{align*}

To bound $\hat{M}_{n,Y}(\phi_n)$, observe that conditional on $X_{1},\dots,X_{n}$, $\| S_{nj}^{eX} \|_{\psi_{2}} \leq CB_{n}^{1/2}$ for all $j=1,\dots,p$ on the event $\max_{1\leq j\leq p}\En[(X_{ij}-\bar{X}_{j})^{2}] \leq CB_{n}$, which holds with probability larger than $1-\alpha/6$.
Hence, employing the same argument as that used to bound $\hat{M}_{n,X}(\phi_{n})$, we conclude that
\[
\Pr ( \hat{M}_{n,Y}(\phi_n) > 0 ) \leq \alpha/6 + \alpha/6 = \alpha/3, 
\]
which implies that 
\[
\Pr ( \hat{M}_{n}(\phi_{n}) = 0 ) > 1-(\alpha/6+\alpha/3) = 1-\alpha/2. 
\]

Taking these together, by Theorem \ref{thm: empirical bootstrap CLT}, with probability larger than $1-(\alpha/6+\alpha/6+\alpha/2)=1-5\alpha/6$, we have 
\[
\rho_{n}^{EB}(\mA^{\re}) \leq \rho_{n}^{MB}(\mA^{\re}) + C \{ n^{-1} B_{n}^{2} \log^{7} (pn) \}^{1/6}.
\]
The final conclusion follows from Proposition \ref{cor: MB corollary}.

\medskip
\textbf{Case  (E.2)}.
In this case, in addition to (\ref{eq: trivial bound}), we may assume that 
\begin{equation}\label{eq: trivial bound 3}
\frac{B_n^2\log^{3}(pn)}{\alpha^{2/q}n^{1-2/q}}\leq c \leq 1
\end{equation}
for sufficiently small $c>0$, since otherwise the assertion of the proposition is trivial by setting $C$ sufficiently large. Then as in the previous case,  by the proof of Proposition \ref{cor: MB corollary},
with probability larger than $1-\alpha/6$, $b/2 \leq \En[(X_{ij}-\bar{X}_{j})^2] \leq CB_{n}$ for all $j=1,\dots,p$.

To bound $\hat{L}_{n}$, recall that $\hat{L}_{n} \leq 8 \max_{1 \leq j \leq p} \En[|X_{ij}|^{3}]$, and by Lemma \ref{lem: maximal ineq nonnegative}, 
\[
\Ep[ \max_{1 \leq j \leq p} \En[|X_{ij}|^{3}] ] \leq C(B_{n} + B_{n}^{3} n^{-1+3/q} \log p). 
\]
Hence by applying Lemma \ref{lem: deviation ineq nonnegative} (ii) with $s=q/3$, we have for every $t > 0$, 
\[
\Pr ( \hat{L}_{n} \geq C(B_{n} + B_{n}^{3} n^{-1+3/q} \log p) + n^{-1} t ) \leq C t^{-q/3} \Ep[\max_{i,j} |X_{ij}|^{q}] \leq Ct^{-q/3} n B_{n}^{q}. 
\]
Solving $Ct^{-q/3} n B_{n}^{q} = \alpha/6$, we conclude that $\Pr ( \hat{L}_{n} \geq \overline{L}_{n} ) \leq \alpha/6$
where $\overline{L}_{n}= C(B_{n} + B_{n}^{3} n^{-1+3/q}\alpha^{-3/q} \log p)$.

Next, consider $\hat{M}_{n,X}(\phi_{n})$. As in the previous case, 
\[
\Pr ( \hat{M}_{n,X}(\phi_n) > 0 ) \leq \Pr ( \max_{i,j} |X_{ij}| > \sqrt{n}/(8\phi_{n} \log p) ).
\]
Since the right-hand side is nondecreasing in $\phi_n$, and 
\[
\phi_n \leq c B_{n}^{-1} n^{1/2 - 1/q} \alpha^{1/q} (\log p)^{-1},
\]
we have (by choosing the constant $C$ in $\overline{L}_{n}$ large enough)
\begin{align*}
&\Pr ( \max_{i,j} |X_{ij}| > \sqrt{n}/(8\phi_{n} \log p) ) \\
&\quad \leq n \max_{i} \Pr ( \max_{j} |X_{ij}| > C B_{n}n^{1/q} \alpha^{-1/q} ) \leq \alpha/6.
\end{align*}
For $\hat{M}_{n,Y}(\phi_{n})$, we make use of the argument in the previous case, and conclude that 
\[
\Pr ( \hat{M}_{n,Y}(\phi_{n}) > 0 ) \leq \alpha/2.
\]
The rest of the proof is the same as in the previous case. Note that 
\[
\left ( \frac{\overline{L}_{n}^{2} \log^{7}(pn)}{n} \right )^{1/6} \leq C \left [ \left ( \frac{B_n^{2} \log^{7} (pn)}{n} \right )^{1/6} + \left (\frac{B_n^{2} \log^{3} (pn)}{ \alpha^{2/q}n^{1-2/q}}\right )^{1/2} \right ],
\]
and because of (\ref{eq: trivial bound 3}), the second term inside the bracket on the right-hand side is at most 
\[
\left (\frac{B_n^{2} \log^{3}(pn)}{ \alpha^{2/q}n^{1-2/q}}\right )^{1/3}.
\]
This completes the proof in this case.
\qed


\end{document}